\newcommand{\C}{\mathbb{C}}
\newcommand{\Z}{\mathbb{Z}}
\newcommand{\N}{\mathbb{N}}
\newcommand{\bK}{\mathbb{K}}
\newcommand{\bT}{\mathbb{T}}
\newcommand{\Cuntz}[1]{\mathcal{O}_{#1}}
\newcommand{\JiangSu}{\mathcal{Z}}
\newcommand{\Aut}[1]{\text{\normalfont Aut}(#1)}
\newcommand{\eqAut}[2]{\operatorname{Aut}_{#1}(#2)}
\newcommand{\eqAutId}[2]{\operatorname{Aut}_{#1,0}(#2)}
\newcommand{\eqIsom}[3]{\operatorname{Iso}_{#1}\left(#2, #3\right)}
\newcommand{\Endo}[1]{\operatorname{End}\left(#1\right)}
\newcommand{\Proj}[2]{\operatorname{Proj}_{#1}(#2)}
\newcommand{\Rep}[1]{\operatorname{Rep}(#1)}
\newcommand{\id}[1]{\operatorname{id}_{#1}}
\newcommand{\obj}[1]{\text{obj}(#1)}
\newcommand{\cI}{\mathcal{I}}
\newcommand{\lscal}[3]{\tensor*[_{#1}]{\langle #2, #3 \rangle}{}}
\newcommand{\bddR}{R_{\text{\normalfont bdd}}}
\newcommand{\VecCf}{\text{Vec}_{\C}^\text{fin}}
\DeclareMathOperator{\Ad}{Ad}
\DeclareMathOperator{\hocolim}{hocolim}
\newtheorem{theorem}{Theorem}[section]
\newtheorem{lemma}[theorem]{Lemma}
\newtheorem{corollary}[theorem]{Corollary}
\newtheorem{prop}[theorem]{Proposition}
\theoremstyle{definition}
\newtheorem{definition}[theorem]{Definition}
\newtheorem{remark}[theorem]{Remark}
\newcommand{\extp}{\@ifnextchar^\@extp{\@extp^{\,}}}
\def\@extp^#1{\mathop{\bigwedge\nolimits^{\!#1}}}
\begin{document}
\title[Higher equivariant Dixmier-Douady Theory]{Equivariant higher Dixmier-Douady Theory \\ for circle actions on UHF-algebras}
\author{David E.\ Evans \and Ulrich Pennig}
\address{Cardiff University, School of Mathematics, Senghennydd Road, Cardiff, CF24 4AG, Wales, UK}
\email{EvansDE@cardiff.ac.uk}
\email{PennigU@cardiff.ac.uk}

\begin{abstract}
We develop an equivariant Dixmier-Douady theory for locally trivial bundles of $C^*$-algebras with fibre $D \otimes \bK$ equipped with a fibrewise $\bT$-action, where $\bT$ denotes the circle group and $D = \Endo{V}^{\otimes \infty}$ for a $\bT$-representation $V$. In particular, we show that the group of $\bT$-equivariant $*$-automorphisms $\eqAut{\bT}{D \otimes \bK}$ is an infinite loop space giving rise to a cohomology theory $E^*_{D,\bT}(X)$. Isomorphism classes of equivariant bundles then form a group with respect to the fibrewise tensor product that is isomorphic to $E^1_{D,\bT}(X) \cong [X, B\!\eqAut{\bT}{D \otimes \bK}]$. We compute this group for tori and compare the case $D = \C$ to the equivariant Brauer group for trivial actions on the base space. 
\end{abstract}

\maketitle

\section{Introduction} \label{sec:Intro}
Continuous fields of $C^*$-algebras have found applications in various different areas: They arise naturally in representation theory \cite{paper:HigsonMackeyMachine}, index theory~\cite{paper:HigsonTanGrpoid}, twisted $K$-theory \cite{paper:AtiyahSegal,paper:KaroubiTwistedK,paper:FreedHopkinsTeleman} and conformal field theory \cite{paper:EvansGannon-ModInv,paper:EvansGannon-ModInvII}. In fact, by the Dauns-Hofmann theorem (see for example \cite[Thm.~A.34]{book:RaeburnWilliams}) any $C^*$-algebra $B$ with Hausdorff primitive spectrum $X$ is a continuous field of simple $C^*$-algebras over $X$.  
While the classification of all continuous fields of simple $C^*$-algebras over a given topological space $X$ is out of reach, section algebras of locally trivial bundles provide a particularly well-behaved class of fields that is open to classification by methods from homotopy theory: In fact, any such bundle with fibre algebra $A$ is associated to a principal $\Aut{A}$-bundle $P \to X$, where $\Aut{A}$ denotes the group of $*$-automorphisms of $A$ equipped with the pointwise-norm topology. But for each such principal bundle there exists a continuous map $f \colon X \to B\Aut{A}$, unique up to homotopy, such that 
\[
	\begin{tikzcd}
		P \ar[d] \ar[r] & E\Aut{A} \ar[d] \\
		X \ar[r, "f" below] & B\Aut{A}
	\end{tikzcd}
\]
is a pullback diagram, where $E\Aut{A} \to B\Aut{A}$ is the universal principal $\Aut{A}$-bundle over the classifying space $B\Aut{A}$. This reduces the classification of locally trivial $C^*$-algebra bundles with fibre $A$ to the computation of the homotopy set $[X, B\Aut{A}]$. 

The classifying space $BG$ is a delooping of the topological group $G$ in the sense that the based loop space $\Omega BG$ is homotopy equivalent to $G$. 
This implies $\pi_n(BG) \cong \pi_{n-1}(G)$. While this observation allows us to classify the isomorphism classes of principal $\Aut{A}$-bundles over spheres and other suspensions of spaces, in general the homotopy type of $B\Aut{A}$ is difficult to determine (e.g., in the commutative case $\Aut{C(X)} \cong \text{Homeo}(X)$). This changes drastically for fibre algebras $A = D \otimes \bK$, where $D$ belongs to the class of strongly self-absorbing $C^*$-algebras \cite{paper:TomsWinter} and $\bK$ denotes the compact operators on a separable infinite dimensional Hilbert space. Such algebras satisfy $A \otimes A \cong A$ with controllable isomorphisms that equip both  homotopy sets $[X, \Aut{A}]$ and $[X, B\Aut{A}]$ with an extra structure induced by the tensor product. 

It was shown by Dadarlat and the second author that the tensor product gives $\Aut{D \otimes \bK}$ the structure of an infinite loop space \cite{paper:DadarlatP-DD-theory,paper:DadarlatP-UnitSpectra}. This means that it has deloopings of arbitrary order, or in other words, that there is a sequence of spaces $(Y_n)_{n \in \N_0}$ such that $Y_0 \simeq \Aut{D \otimes \bK}$ and $\Omega Y_{k+1} \simeq Y_k$. The sequence $(Y_n)_{n \in \N_0}$ together with the equivalences $Y_k \to \Omega Y_{k+1}$ is called an $\Omega$-spectrum, and it gives rise to a cohomology theory $E^k(X) = [X,Y_k]$. 

The space $\Aut{D \otimes \bK}$ now has two a priori different deloopings: The first one given by the space $Y_1$ in the $\Omega$-spectrum underlying the infinite loop space structure induced by the tensor product, which should better be denoted by $B_{\otimes} \Aut{D \otimes \bK}$, and the second one by considering $\Aut{D \otimes \bK}$ as a topological group and forming its classifying space $B\Aut{D \otimes \bK}$. It was shown in \cite{paper:DadarlatP-UnitSpectra} 
that $B_{\otimes}\Aut{D\otimes \bK} \simeq B\Aut{D\otimes \bK}$. Thus, the cohomology theory $E_D^*(X)$ represented by the $\Omega$-spectrum satisfies 
\[
	E_D^0(X) = [X,\Aut{D \otimes \bK}] \qquad \text{and} \qquad E^1_D(X) \cong [X,B\Aut{D \otimes \bK}]\ .
\]
As a result $E^1_D(X)$ classifies locally trivial $D \otimes \bK$-bundles up to isomorphism, and - as the first group in a cohomology theory - is amenable to the computational power of algebraic topology. In particular, it is computable via the Atiyah-Hirzebruch spectral sequence. 

Apart from the classification result itself, the groups $E^1_{\Cuntz{\infty}}(X)$ also provide a natural home for invariants of locally trivial bundles with fibre the Cuntz algebras $\Cuntz{n}$ or Cuntz-Toeplitz algebras  \cite{paper:SogabeToeplitz,paper:SogabeCuntz}. If $\alpha \colon G \to \Aut{B}$ is an action of a Poly-$\Z$ group on a Kirchberg algebra $B$, then the associated bundle $EG \times_{\alpha} \Aut{B} \to BG$ played a crucial role in the classification of such actions developed in \cite{paper:IzumiMatuiPolyZ_I, paper:IzumiMatuiPolyZ_II}.

The present paper was motivated by a construction in equivariant twisted $K$-theory \cite{preprint:EvansP}: Let $\VecCf$ be the groupoid of finite-dimensional complex inner product spaces and unitary isomorphisms. An exponential functor $F \colon (\VecCf,\oplus) \to (\VecCf,\otimes)$ in the sense of \cite[Def.~2.2]{preprint:EvansP} associates to a finite-dimensional representation $W$ of a group $G$ another representation $V = F(W)$ and an infinite tensor product $C^*$-algebra $D$ defined by 
\[
	D = \Endo{V}^{\otimes \infty}\ .
\] 
Infinite tensor products like this are called uniformly hyperfinite (UHF). The group $G$ acts on $D$ by conjugation in each tensor factor. The algebra~$D$ tensorially absorbs $\Endo{F(W')}$ for any subrepresentation $W'$ of $W$, ie.\ the exponential functor induces a $*$-isomorphism
\(
	\Endo{F(W')} \otimes D \cong D
\)
that turns $F(W') \otimes D$ into a Morita equivalence. This is used in \cite{preprint:EvansP} in the case $G = SU(n)$ to construct a Fell bundle $\mathcal{E}$ whose associated $C^*$-algebra $C^*(\mathcal{E})$ is a continuous field over $G$ with fibre $D$. The bundle $\mathcal{E}$ boils down to the basic gerbe used in \cite{paper:FreedHopkinsTeleman} in case $F$ is the determinant functor. The algebra $C^*(\mathcal{E})$ carries a $G$-action that is compatible with the adjoint action of $G$ on itself. Moreover, $C^*(\mathcal{E}) \otimes \bK$ is isomorphic to the section algebra of a locally trivial bundle $\mathcal{A} \to G$ with fibre $D \otimes \bK$ and therefore falls into the scope of the classification results mentioned earlier giving a class $[\mathcal{A}] \in E^1_D(G)$. However, this topological classification does not take the $G$-action on $\mathcal{A}$ (inherited from the one on $C^*(\mathcal{E}) \otimes \bK$) into account. 

In the present paper we will initiate a programme with the goal to develop an equivariant extension of  the generalised Dixmier-Douady theory in \cite{paper:DadarlatP-DD-theory}. As a starting point we will simplify the situation by only considering equivariant locally trivial bundles $\mathcal{A} \to X$ with fibre $D \otimes \bK$ for a UHF-algebra~$D$ as above and a fibrewise action of the circle group $\bT$. While the action we consider on $D$ arises from an infinite tensor product of representations, we take $\bK = \bK(H)$ with an infinite-dimensional separable Hilbert space $H$ that contains all $\bT$-representations with infinite multiplicity. The structure group of such bundles reduces to the group $\eqAut{\bT}{D \otimes \bK}$ of $\bT$-equivariant $*$-automorphisms. Examples arise from the  construction in \cite{preprint:EvansP} for $G = SU(2)$ after pulling the bundle back to the maximal torus of $G$. 

In our setting we are able to describe the complete picture. Our main result, proven as Cor.~\ref{cor:coh_theory} can be summarised as follows:
\begin{theorem}
	The topological group $\eqAut{\bT}{D \otimes \bK}$ is an infinite loop space with associated cohomology theory $E_{D,\bT}^*(X)$ that satisfies 
	\[
		E^0_{D,\bT}(X) = [X, \eqAut{\bT}{D\otimes \bK}] \quad \text{and} \quad E^1_{D,\bT}(X) \cong [X, B\!\eqAut{\bT}{D\otimes \bK}]\ .
	\]
	In particular, isomorphism classes of $\bT$-equivariant locally trivial $C^*$-algebra bundles with fibres isomorphic to the $\bT$-algebra $D \otimes \bK$ over a finite CW-complex $X$ with trivial $\bT$-action on $X$ form a group with respect to the tensor product that is isomorphic to $E^1_{D,\bT}(X)$.
\end{theorem}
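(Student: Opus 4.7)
The plan is to follow the non-equivariant strategy of Dadarlat--Pennig while ensuring equivariance at each step. The key input is the fibrewise tensor product, which induces a multiplication
\[
	\mu \colon \eqAut{\bT}{D\otimes \bK} \times \eqAut{\bT}{D\otimes \bK} \longrightarrow \eqAut{\bT}{(D\otimes \bK)^{\otimes 2}}
\]
composed with a $\bT$-equivariant $*$-isomorphism $(D\otimes \bK)^{\otimes 2} \cong D\otimes \bK$. Such isomorphisms exist because $D = \Endo{V}^{\otimes \infty}$ absorbs any finite tensor power of itself $\bT$-equivariantly, and $\bK = \bK(H)$ for an $H$ containing all $\bT$-representations with infinite multiplicity is likewise $\bT$-equivariantly self-absorbing under $\otimes$. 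The entire $E_\infty$-structure will be built out of such choices.

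First I would introduce an $E_\infty$-operad $\mathcal{P}$ whose $n$-th space is (homotopy equivalent to) the space of $\bT$-equivariant $*$-isomorphisms $(D\otimes \bK)^{\otimes n} \to D\otimes \bK$, and show this space is contractible together with the requisite $\Sigma_n$-action and composition maps. Contractibility should follow from an equivariant version of the fact that the unitary group of a $\bT$-stable infinite-multiplicity Hilbert space is contractible (by equivariant Kuiper), combined with an equivariant McDuff-type argument which trivialises any two choices of $\bT$-equivariant isomorphism up to a continuous path. Then $\eqAut{\bT}{D\otimes \bK}$ becomes a $\mathcal{P}$-algebra, and May's recognition principle yields an $\Omega$-spectrum $(Y_k)_{k\geq 0}$ with $Y_0 \simeq \eqAut{\bT}{D\otimes \bK}$, producing the cohomology theory $E^k_{D,\bT}(X) = [X,Y_k]$ and in particular the identification $E^0_{D,\bT}(X) = [X,\eqAut{\bT}{D\otimes \bK}]$.

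To identify $E^1_{D,\bT}(X) \cong [X, B\!\eqAut{\bT}{D\otimes \bK}]$, I would compare the two deloopings $Y_1$ and $B\!\eqAut{\bT}{D\otimes \bK}$ (the latter the bar construction of the topological group) via a natural map as in \cite{paper:DadarlatP-UnitSpectra}, and show it is a weak equivalence. The idea is to realise both as classifying spaces for locally trivial $\bT$-equivariant $D\otimes \bK$-bundles: $B\!\eqAut{\bT}{D\otimes \bK}$ by the standard principal-bundle argument, and $Y_1$ via the operadic delooping, interpreting points of $Y_1$ as $\bT$-equivariant bundle classes. Granted this equivalence, the last sentence follows from the standard fact that $\bT$-equivariant locally trivial $D\otimes \bK$-bundles over a finite CW-complex $X$ with trivial $\bT$-action are classified by principal $\eqAut{\bT}{D\otimes \bK}$-bundles, hence by $[X,B\!\eqAut{\bT}{D\otimes \bK}]$, with the fibrewise tensor product on bundles corresponding to the infinite-loop-space addition on $E^1_{D,\bT}$.

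The main obstacle is the equivalence $Y_1 \simeq B\!\eqAut{\bT}{D\otimes \bK}$, which in the non-equivariant case relies on a careful manipulation of universal bundles combined with the strong self-absorption of $D \otimes \bK$. The $\bT$-equivariant analogue requires establishing a genuinely equivariant strong self-absorption property for $D\otimes \bK$ (i.e.\ a $\bT$-equivariant approximate-innerness of the isomorphism $D\otimes \bK \otimes D \otimes \bK \to D\otimes \bK$), and checking that the relevant contractible spaces of $\bT$-equivariant isomorphisms and isometries behave well under the bar construction. Once this is in place, the rest of the argument is a formal transcription of the non-equivariant proof.
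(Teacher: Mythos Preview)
Your overall strategy---build an infinite loop space structure on $\eqAut{\bT}{D\otimes\bK}$ from the tensor product, then compare the resulting delooping with the bar-construction delooping---is the same as the paper's. But the operadic implementation you propose does not go through as stated.

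The gap is the contractibility claim. The space of $\bT$-equivariant $*$-isomorphisms $(D\otimes\bK)^{\otimes n}\to D\otimes\bK$ is a torsor over $\eqAut{\bT}{D\otimes\bK}$, and that group is \emph{not} contractible: the paper computes $\pi_0(\eqAut{\bT}{D\otimes\bK})\cong GL_1(K_0^\bT(D)_+)$ and, on the identity component, $\eqAutId{\bT}{D\otimes\bK}\simeq BU(D^\bT)$, which has nontrivial $\pi_2$. Equivariant Kuiper gives contractibility of the unitary group $U(H)^\bT$ in the strong topology, but this does not descend to the automorphism group of $D\otimes\bK$ in the point-norm topology. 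So your $\mathcal{P}$ is not an $E_\infty$-operad and May's recognition principle does not apply.

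The paper sidesteps this by using commutative $\cI$-monoids in the sense of Schlichtkrull rather than operads. One defines $G_D^\bT(\mathbf{n})=\eqAut{\bT}{(D\otimes\bK)^{\otimes n}}$ with structure maps coming from permutations and $\alpha\mapsto\alpha\otimes\id{}$, and the tensor product gives the monoid multiplication $\mu_{m,n}$. No contractibility is needed; what replaces it is \emph{stability}: the map $\alpha\mapsto\alpha\otimes\id{}$ from $G_D^\bT(\mathbf{1})$ to $G_D^\bT(\mathbf{2})$ is a homotopy equivalence, which follows from the explicit homotopy $H_l$ built out of the path $\beta$ interpolating between $a\mapsto a\otimes(1\otimes e)$ and an equivariant isomorphism. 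Stability forces the homotopy colimit $(G_D^\bT)_{h\cI}$ to be equivalent to $\eqAut{\bT}{D\otimes\bK}$, and Schlichtkrull's machinery then produces the $\Omega$-spectrum. The comparison $B_\otimes\!\eqAut{\bT}{D\otimes\bK}\simeq B\!\eqAut{\bT}{D\otimes\bK}$ is obtained not by a direct bundle-theoretic argument but by invoking the Eckmann--Hilton compatibility $(\alpha_1\otimes\alpha_2)\circ(\beta_1\otimes\beta_2)=(\alpha_1\circ\beta_1)\otimes(\alpha_2\circ\beta_2)$, which is exactly the input to \cite[Thm.~3.6]{paper:DadarlatP-UnitSpectra}. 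Your identification of what needs to be proven (equivariant strong self-absorption, comparison of deloopings) is correct, but the machine that processes it is different.
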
 

Along the way we also compute the coefficients of the cohomology theory $E_{D,\bT}^*(X)$, which boil down to the homotopy groups $\pi_n(\eqAut{\bT}{D \otimes \bK})$. In the non-equivariant case the groups $\pi_n(\Aut{A})$ for UHF-algebras $A$ have been studied first by Thomsen in \cite{paper:Thomsen-htpyUHF} and later for AF-algebras $A$ by Nistor in \cite{paper:Nistor}. The key assumption that allows a complete computation is that $K_0(A)$ has large denominators (see \cite[Def.~2.2]{paper:Nistor}). This is automatic for simple infinite-dimensional $C^*$-algebras. Our results about the homotopy type of $\eqAut{\bT}{D \otimes \bK}$ can be seen as an equivariant generalisation of \cite{paper:Nistor}. Interestingly, the assumption of having large denominators might fail depending on the starting representation defining $D$. Nevertheless, we are able to determine all groups $\pi_n(\eqAut{\bT}{D \otimes \bK})$. Surprisingly, they are determined by the homotopy groups of $U(D^\bT)$, where $D^\bT$ is the fixed-point algebra, ie.\ the nonstable $K$-theory in the sense of \cite{paper:ThomsenNonstable} (see Prop.~\ref{prop:Aut_Proj_equivalence}, Prop.~\ref{prop:Proj_BU} and Thm.~\ref{thm:coefficients}): Let $p_V(t) = \sum_{i = 0}^d a_it^i \in \Z[t] \subset \Z[t,t^{-1}]$ be the character polynomial of the defining representation. Since $D$ is a direct limit of matrix $\bT$-algebras with connecting maps that induce multiplication by $p_V(t)$ in $\bT$-equivariant $K$-theory, we have $K_0^\bT(D) \cong \Z[t,t^{-1},p_V(t)^{-1}]$. Let
\begin{align*}
	\bddR &= \left\{ x \in K_0^\bT(D)\ | \ -m[1_D] \leq x \leq m[1_D] \text{ for some } m \in \N \right\} \\
	& \subset \Z[t,p_V(t)^{-1}] \quad \text{(see Cor.~\ref{cor:bddR_in_Zt})} \ ,\\
		\bddR^0 &= \{r \in \bddR \ |\ r(0) = 0 \} \ ,\\
	\bddR^\infty &= \left\{\frac{q}{p_V^k} \in \bddR \ |\ q \in \Z[t],\ k \geq 0,\ \deg(q) < kd \right\}\ .
\end{align*}
All homotopy groups in odd degrees vanish and the even ones are given by
\begin{align*}
	\pi_0(\eqAut{\bT}{D \otimes \bK}) &\cong GL_1(K_0^\bT(D)_+)\ , \\
	\pi_2(\eqAut{\bT}{D \otimes \bK}) &\cong \bddR\ .
\end{align*}
and for $k > 1$ we have $\pi_{2k}(\eqAut{\bT}{D \otimes \bK}) \cong \pi_{2k-1}(U(D^\bT))$ and therefore
\begin{align*}
	\pi_{2k}(\eqAut{\bT}{D \otimes \bK}) \cong
	\begin{cases}
		\bddR & \text{if } (a_0 > 1) \text{ and } (a_d > 1)\ , \\
		\bddR^0 & \text{if } (a_0 = 1) \text{ and } (a_d > 1)\ ,\\
		\bddR^\infty & \text{if } (a_0 > 1) \text{ and } (a_d = 1)\ ,\\
		\bddR^\infty \cap \bddR^0 & \text{if } (a_0 = 1) \text{ and } (a_d = 1)\ .
	\end{cases}
\end{align*}
In contrast to the non-equivariant case these groups are therefore in general only 2-periodic from degree 4 onwards. 

Feeding the coefficients $\check{E}^k_{D,\bT} = \pi_{-k}(\eqAut{\bT}{D \otimes \bK}$ into the Atiyah-Hirze\-bruch spectral sequence for $E^*_{D,\bT}(X)$ it is then possible to compute these groups for $X = \bT^n$. In this case the spectral sequence collapses at the $E_2$-page. The result is Cor.~\ref{cor:tori}. 

Finally, we take a closer look at the case where $D = \C$, ie.\ the case of the trivial representation. Even in this simplest case our results are interesting, since the group $E_{\C,\bT}^1(X)$ then comes with a natural homomorphism to the equivariant Brauer group $\text{Br}_{\bT}(X)$ for the trivial $\bT$-action on~$X$ (see \cite{paper:CrockerKumjianRaeburnWilliams} or \cite[Chap.~7]{book:RaeburnWilliams}). We show that this map is an isomorphism in Thm.~\ref{thm:BrauerGroup}. We will look into an interpretation of $E^1_{D,\bT}(X)$ as a generalisation of the equivariant Brauer group from \cite{paper:CrockerKumjianRaeburnWilliams} in future work. This should be compared to the non-equivariant case discussed in \cite{paper:DadarlatP-BrauerGroup}.

The guiding question for the programme initiated here is of course how the methods used in this paper transfer to other strongly self-absorbing $C^*$-dynamical systems. But there are several other lines of research that present themselves: While the lowest cohomological contribution to $E^1_{D,\bT}(X)$ can be understood via the edge homomorphism $E^1_{D,\bT}(X) \to H^1(X,GL_1(K_0^\bT(D)))$ in the Atiyah-Hirzebruch spectral sequence and has a bundle-theoretic interpretation, this is no longer true for higher cohomological data. For example, it would be interesting to see how a $\bT$-equivariant locally trivial $D \otimes \bK$-bundle $\mathcal{A} \to \bT^n$ can be constructed from classes in $H^{2n+1}(\bT^n,\bddR^{0,\infty})$ explicitly. 

Moreover, the cohomology theory $E^*_{D,\bT}(X)$ should be closely linked to equivariant stable homotopy theory. Evidence for this is the non-equivariant case \cite{paper:DadarlatP-UnitSpectra} and the fact that $E^1_{\C,\bT}(X)$ gives the $\bT$-equivariant third cohomology group of $X$ (with trivial $\bT$-action on $X$). We will explore the details of this in future work.\\[-2mm]

The article is structured as follows: In Sec.~\ref{sec:path-comp} we prove some preliminary results that are needed in later sections. We show that the action of $\bT$ on~$D$ is strongly self-absorbing (Lem.~\ref{lem:action_ssa}). We also take a closer look at the evaluation map at the projection $1 \otimes e \in (D \otimes \bK)^\bT$, where $e \in \bK$ denotes a rank $1$-projection onto the trivial representation throughout the article. It is then proven that the stabiliser $\eqAut{\bT,1\otimes e}{D \otimes \bK}$ is contractible, first in the case $D = \C$ in Thm.~\ref{thm:Aut_eT_K_contractible}, which is then used to prove the general case in Thm.~\ref{thm:Aut_eq_stabiliser_contractible}. The general argument is similar to the non-equivariant case. The section finishes with the first main result: the computation of the group of path-components $\pi_0(\eqAut{\bT}{D \otimes \bK})$ in Lem.~\ref{lem:pi0_equiv}.

In Sec.~\ref{sec:htpy-type} we consider the path-component of the identity in the equivariant automorphism group, denoted by $\eqAutId{\bT}{D \otimes \bK}$, and establish the homotopy equivalence $\eqAutId{\bT}{D \otimes \bK} \simeq BU(D^\bT)$. This is done in two steps: It is shown in Prop.~\ref{prop:Aut_Proj_equivalence} that $\eqAutId{\bT}{D \otimes \bK} \simeq \Proj{1 \otimes e}{(D \otimes \bK)^\bT}$ and in Prop.~\ref{prop:Proj_BU} that $\Proj{1 \otimes e}{(D \otimes \bK)^\bT} \simeq BU(D^\bT)$. The $K$-groups of $D^\bT$ are determined in Lem.~\ref{lem:K_groups_fixed_point_algebra}, which is then used to compute the homotopy groups of $U(D^\bT)$ in Thm.~\ref{thm:coefficients} giving the coefficients in Cor.~\ref{cor:coh_theory}. While the proof of the homotopy equivalence with $BU(D^\bT)$ follows the same lines as in the non-equivariant case in \cite{paper:DadarlatP-UnitSpectra} and \cite{paper:DadarlatP-DD-theory}, the computation of $\pi_n(U(D^\bT))$ is more intricate, since the fixed-point algebra is no longer simple and $K_0(D^\bT)$ may fail to have large denominators in the sense of \cite{paper:Nistor}.

The main classification result is contained in Sec.~\ref{sec:classification}. Similar to the non-equivariant case in \cite{paper:DadarlatP-UnitSpectra} the proof that $\eqAut{\bT}{D \otimes \bK}$ is an infinite loop space is based on diagram spaces called commutative $\cI$-monoids \cite{paper:Schlichtkrull}. We start Sec.~4 with a summary of the necessary background about them. It is then shown in Lem.~\ref{lem:stable_EHI-group} that $G^\bT_D(\mathbf{n}) = \eqAut{\bT}{(D\otimes \bK)^{\otimes n}}$ defines a commutative $\cI$-monoid in well-pointed topological groups that satisfies a compatibility condition between the group multiplication and the tensor product. The machinery developed in \cite{paper:DadarlatP-UnitSpectra} and outlined at the beginning of Sec.~\ref{sec:classification} then gives our main result Cor.~\ref{cor:coh_theory}. The computation of $E^1_{D,\bT}(\bT^n)$ is in Cor.~\ref{cor:tori} and the final comparison with the equivariant Brauer group is Thm.~\ref{thm:BrauerGroup}.

The appendix contains some elementary results about polynomials with non-negative integer coefficients that are needed in the computation of the groups $\pi_n(U(D^\bT))$ in Thm.~\ref{thm:coefficients}.



\section{The path-components of $\eqAut{\bT}{D \otimes \bK}$} \label{sec:path-comp}
Let $V$ be a finite-dimensional complex inner product space with a unitary $\bT$-action $\rho \colon \bT \to U(V)$ and define $D$ to be the $\bT$-algebra given by
\[
	D = \Endo{V}^{\otimes \infty}
\]
where $\bT$ acts by conjugation on each tensor factor. Let $\sigma = \Ad_{\rho^{\otimes \infty}}$ be this action. Consider the character subspaces of $V$ given by
\[
	V_k = \{\xi \in V \ | \ \rho(z)\xi = z^k\,\xi \quad \forall z \in \bT\}
\]
and note that $V$ decomposes into a finite direct sum
\begin{equation} \label{eqn:decomposition}
	V = \bigoplus_{k \in \Z} V_k\ .
\end{equation}
Let $\C_\ell$ be $\C$ as a vector space equipped with the $\bT$-action given by $z \cdot \xi = z^\ell \xi$ for $z \in \bT$. Let $D_\ell$ be the $\bT$-algebra given by
\[
	D_{\ell} = \Endo{\C_\ell \otimes V}^{\otimes \infty}
\]
and note that there is a canonical isomorphism $D \to D_{\ell}$ obtained as the infinite tensor product of the $*$-isomorphism $\Endo{V} \to \Endo{\C_{\ell} \otimes V}$ given by $T \mapsto \id{\C_{\ell}} \otimes T$. Taking the tensor product with $\C_{\ell}$ for an appropriate choice of $\ell$ we may without loss of generality assume that $V_k = 0$ for $k < 0$ and $V_0$ is the first non-trivial character subspace of $V$. 

\begin{definition}
	Given a $\bT$-representation $(V,\rho)$ a \emph{symmetry path} is a continuous path $\gamma \colon [0,1] \to U(V \otimes V)$ between the identity on $V \otimes V$ and the tensor flip $\xi \otimes \eta \mapsto \eta \otimes \xi$ that is $\bT$-equivariant in the sense that $[(\rho \otimes \rho)(z), \gamma(t)] = 0$ for all $t \in [0,1]$ and $z \in \bT$.
\end{definition}

\begin{lemma}
	Any $\bT$-representation $V$ has a symmetry path.
\end{lemma}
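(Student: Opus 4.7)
The plan is to observe that the tensor flip $\tau \colon V \otimes V \to V \otimes V$, $\xi \otimes \eta \mapsto \eta \otimes \xi$, already lies in the commutant of the diagonal representation $\rho \otimes \rho$, and then to exhibit a path in that commutant's unitary group from the identity to $\tau$. The equivariance of $\tau$ is a one-line check: for $z \in \bT$ and decomposable tensors, $(\rho \otimes \rho)(z)\tau(\xi \otimes \eta) = \rho(z)\eta \otimes \rho(z)\xi = \tau((\rho \otimes \rho)(z)(\xi \otimes \eta))$. Thus symmetry paths correspond precisely to paths in the unitary group of the commutant $\{\rho \otimes \rho\}' \subset \Endo{V \otimes V}$ that start at $\id$ and end at $\tau$.

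Next I would use the character decomposition $V = \bigoplus_{k} V_k$ from \eqref{eqn:decomposition} to describe this commutant explicitly. Writing $W_n = \bigoplus_{k+\ell = n} V_k \otimes V_\ell$ for the weight-$n$ subspace of $V \otimes V$, one has $V \otimes V = \bigoplus_n W_n$ and an operator commutes with $\rho \otimes \rho$ if and only if it preserves each $W_n$. Consequently
\[
	\{\rho \otimes \rho\}' = \bigoplus_n \Endo{W_n}
\]
is a finite direct sum of full matrix algebras, and its unitary group $\prod_n U(W_n)$ is therefore path-connected (each factor $U(W_n)$ is a connected Lie group and the sum is finite since $V$ is finite-dimensional).

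Finally, since both $\id_{V \otimes V}$ and $\tau$ lie in this commutant (the latter because it preserves weights, sending $V_k \otimes V_\ell \subset W_{k+\ell}$ to $V_\ell \otimes V_k \subset W_{k+\ell}$), a path $\gamma \colon [0,1] \to \prod_n U(W_n)$ joining them can be chosen, and this path, viewed inside $U(V \otimes V)$, is by construction $\bT$-equivariant. I expect no real obstacle here: the argument is essentially the observation that in finite dimensions any two unitaries in a common finite-dimensional $C^*$-subalgebra are joined by a path in the unitary group of that subalgebra, and equivariance is automatic once one works in the commutant.
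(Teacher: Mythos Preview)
Your proposal is correct and follows essentially the same approach as the paper: both arguments decompose $V \otimes V$ into the weight spaces $W_n = \bigoplus_{k+\ell=n} V_k \otimes V_\ell$, observe that the identity and the tensor flip each restrict to unitaries of $W_n$, and then invoke the path-connectedness of each $U(W_n)$ to build $\gamma$ as a direct sum of paths. Your phrasing in terms of the commutant $\{\rho\otimes\rho\}'$ is a clean way to package the equivariance, but the content is the same.
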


\begin{proof}
	The character subspaces of $W = V \otimes V$ are given by
	\[
		W_k = (V \otimes V)_k = \bigoplus_{i+j = k} V_i \otimes V_j
	\]
	It suffices to find continuous paths $\gamma_k \colon [0,1] \to U(W_k)$ between the identity and the tensor flip, since the direct sum $\gamma = \bigoplus \gamma_k$ will then have the desired properties. But the existence of $\gamma_k$ follows from the path-connectedness of the unitary group.
\end{proof}

\begin{lemma} \label{lem:action_ssa}
	There is a $\bT$-equivariant isomorphism $\varphi \colon D \to D \otimes D$ such that there are two continuous maps 
	\[
		u \colon [0,1) \to U(D \otimes D) \quad \text{and}\quad w \colon [0,1) \to U(D \otimes D) 
	\]
	with the properties $u(0) = 1 \otimes 1$, $(\sigma_z \otimes \sigma_z)(u(t)) = u(t)$, $w(0) = 1 \otimes 1$, $(\sigma_z \otimes \sigma_z)(w(t)) = w(t)$ for all $t \in [0,1)$ and $z \in \bT$ and 
	\begin{align*}
		\lim_{t \to 1} \lVert u(t)(d \otimes 1)u(t)^* - \varphi(d) \rVert &= 0\ , \\	
		\lim_{t \to 1} \lVert w(t)(1 \otimes d)w(t)^* - \varphi(d) \rVert &= 0\ .		
	\end{align*}
	In particular, the $C^*$-dynamical system $(D, \sigma)$ is strongly self-absorbing in the sense that the left tensor embedding $d \mapsto d \otimes 1$ is strongly asymptotically $\bT$-unitarily equivalent to an isomorphism.
\end{lemma}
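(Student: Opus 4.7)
The plan is a $\bT$-equivariant refinement of the classical infinite-tensor-product absorption argument, with the symmetry-path lemma as the essential input. I would define $\varphi \colon D \to D \otimes D$ by interleaving tensor factors: writing $D = \bigotimes_{n=1}^\infty \Endo{V}$, send the $n$-th factor to the $\lceil n/2 \rceil$-th slot of the first copy of $D$ in $D \otimes D$ if $n$ is odd, and to the $n/2$-th slot of the second copy if $n$ is even. Since every tensor factor carries the same $\bT$-action, $\varphi$ is manifestly a $\bT$-equivariant $*$-isomorphism.

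\textbf{The path $u$.} For each $N$, let $\pi_N$ denote the permutation of the tensor-factor index set $\N \sqcup \N'$ of $D \otimes D$ that sends $2k-1 \in \N \mapsto k \in \N$ and $2k \in \N \mapsto k \in \N'$ for $1 \le k \le N$, fixing all other indices. Decomposing $\pi_N$ into a product of finitely many transpositions $(i,j)$, each transposition is implemented on $D \otimes D$ by a tensor flip on the corresponding copy of $V \otimes V$, and the symmetry-path lemma furnishes each such flip with a continuous path to $1$ that commutes with the diagonal $\bT$-action on $V \otimes V$. Concatenating these paths produces a continuous path in the $(\sigma \otimes \sigma)$-fixed unitary group of $D \otimes D$ from $1$ to an implementing unitary $U_N$. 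Since $\pi_{N+1}$ extends $\pi_N$ on the initial segment, $U_N^* U_{N+1}$ is a product of tensor flips on fresh slots only, giving a $(\sigma \otimes \sigma)$-equivariant path from $U_N$ to $U_{N+1}$ in the same way. I pick $t_N \nearrow 1$ with $t_0 = 0$, set $u(t_N) = U_N$, and glue these paths to define $u \colon [0,1) \to U(D \otimes D)$ with $u(0) = 1 \otimes 1$ and $(\sigma_z \otimes \sigma_z)(u(t)) = u(t)$ throughout. For $d \in \bigcup_N (\Endo{V}^{\otimes N} \otimes 1)$ a direct check shows $u(t_M)(d \otimes 1) u(t_M)^* = \varphi(d)$ once $M$ is large enough; a standard $\varepsilon/3$-argument then extends this to the asymptotic identity $\lim_{t \to 1} \lVert u(t)(d \otimes 1)u(t)^* - \varphi(d) \rVert = 0$ for every $d \in D$.

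\textbf{The path $w$ and the main obstacle.} For $w$ I would either repeat the construction with the mirrored permutation that processes the second copy first, or set $w(t) = u(t)\,\tau(t)$ with $\tau(t)$ a $\bT$-equivariant approximation of the factor-swap $D \otimes D \to D \otimes D$, built analogously by truncating after $N$ tensor factors and using the symmetry-path lemma to equivariantly nullhomotope the resulting finite flips. The strong-self-absorption conclusion follows at once. The only real obstacle is equivariance bookkeeping: a tensor flip on $V \otimes V$ automatically commutes with the diagonal $\bT$-action and lies in $U(V \otimes V)^\bT$, but a priori need not sit in the identity component of that fixed-point subgroup, and the symmetry-path lemma is exactly the input certifying that it does. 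Once this is in hand, the remainder is a routine orchestration of finite permutations into an infinite equivariant approximation.
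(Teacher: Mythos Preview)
Your approach is essentially the paper's: the interleaving isomorphism, symmetry paths to implement tensor flips equivariantly, an iterative build-up of the unitary path over larger and larger finite segments, and an $\varepsilon/3$-argument on the algebraic tensor product. The paper organises things slightly differently by working in $U(D)$ rather than $U(D\otimes D)$---it builds $v$ with $v(t)\,\varphi^{-1}(d\otimes 1)\,v(t)^*\to d$ and then transports via $\varphi$---and gives an explicit recursion using only adjacent-slot flips.

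One point to tighten: your $\pi_N$ as written is not a bijection of $\N\sqcup\N'$ (both $2k\in\N$ and $k'\in\N'$ are sent to $k'$), and once you repair this you must choose the completions coherently so that the transpositions making up $\pi_{N+1}\pi_N^{-1}$ really avoid the $\varphi$-image of $\{1,\dots,N\}$. Otherwise the intermediate path from $U_N$ to $U_{N+1}$ can disturb already-placed factors and the convergence $u(t)(d\otimes 1)u(t)^*\to\varphi(d)$ fails for $t$ strictly between the nodes $t_M$, not just at them. The paper's explicit recursion $v(t)=u_{k+2}(t-k)\cdots u_{2k+2}(t-k)\,v(k)$ on $(k,k+1]$ is one concrete way to arrange this coherence.
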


\begin{proof}
	Consider the following two $*$-homomorphisms: 
	\[
		\varphi_{k,\ell} \colon \Endo{V}^{\otimes k} \otimes \Endo{V}^{\otimes \ell} \to \Endo{V}^{\otimes \max\{k,\ell\}} \otimes \Endo{V}^{\otimes \max\{k,\ell\}}\,
	\]
	maps $T \otimes S$ to $T \otimes 1 \otimes \dots \otimes 1 \otimes S$ in case $k \leq \ell$ with a tail of $\ell - k$-copies of $1 = 1_{\Endo{V}}$ and similarly for the second tensor factor in case $k > \ell$. The homomorphism 
	\[
		\kappa_{m} \colon \Endo{V}^{\otimes m} \otimes \Endo{V}^{\otimes m} \to \Endo{V}^{\otimes 2m}
	\]
	alternates the tensor factors, ie.\ $S_1 \otimes \dots S_m \otimes T_1 \otimes \dots \otimes T_m \mapsto S_1 \otimes T_1 \otimes \dots \otimes S_m \otimes T_m$. Let $\iota_m \colon \Endo{V}^{\otimes m} \to D$ be the inclusion map from the direct limit. The composition $\iota_{2\max\{k,\ell\}} \circ \kappa_{\max\{k,\ell\}} \circ \varphi_{k,l}$ factors through the limit and gives a $\bT$-equivariant isomorphism
	\[
		\varphi^{-1} \colon D \otimes D \to D\ .
	\]
	To find $u$ it now suffices to construct a continuous map $v \colon [0,\infty) \to U(D)$ such that $\sigma_z(v(t)) = v(t)$ for all $t \in [0,\infty)$ and  
	\begin{equation} \label{eqn:asymptotic}	
		\lim_{t \to \infty} \lVert v(t)\varphi^{-1}(d \otimes 1)v(t)^* - d \rVert = 0\ .
	\end{equation}
	For $d = T_1 \otimes T_2 \otimes T_3 \otimes \dots$ with $T_i \in \Endo{V}$ the image $\varphi^{-1}(d \otimes 1)$ will take the form
	\[
		T_1 \otimes 1 \otimes T_2 \otimes 1 \otimes T_3 \otimes \dots 
	\] 
	Choose a symmetry path $\gamma \colon [0,1] \to U(V \otimes V)$ and let $u_i \colon [0,1] \to U(D)$ be defined by
	\[
		u_i(t) = \underbrace{1 \otimes \dots \otimes 1}_{(i-1)\ \text{factors}} \otimes \gamma(t) \otimes 1 \otimes \dots 
	\]
	Each $u_i$ satisfies $\sigma_z(u_i(t)) = u_i(t)$ for all $t$. 
	Note that conjugation of $d$ by $u_2$ gives a continuous path between 
	\[
		T_1 \otimes 1 \otimes T_2 \otimes 1 \otimes T_3 \otimes \dots 
	\qquad 
	\text{and} 
	\qquad
		T_1 \otimes T_2 \otimes 1 \otimes 1 \otimes T_3 \otimes \dots 
	\] 
	Conjugation of $d$ by $u_3(t-1)u_4(t-1)u_2(1)$ for $t \in [1,2]$ then gives a path between
	\[
		T_1 \otimes T_2 \otimes 1 \otimes 1 \otimes T_3 \otimes \dots 	
		\qquad
		\text{and}
		\qquad
		T_1 \otimes T_2 \otimes T_3 \otimes 1 \otimes 1 \otimes \dots 			
	\]
	Iterating this procedure we define $v$ to be $u_2$ on the interval $[0,1]$ and on $(k,k+1]$ we set
	\[
		v(t) = u_{k+2}(t-k)\,\dots\,u_{2k+2}(t-k)\,v(k)\ ,
	\]
	which is point-wise a unitary in the fixed-point algebra. Let $D_0$ be the dense subalgebra defined by
	\[
		D_0 = \bigcup_{k \in \N} \Endo{V}^{\otimes k} \subset D\ .
	\]
	For $d \in D_0$ conjugation of $\varphi^{-1}(d \otimes 1)$ by the path $v$ will reorder the tensor factors along the path until we end up with the element $d$ itself. Therefore \eqref{eqn:asymptotic} holds by construction. An $\frac{\varepsilon}{3}$-argument then shows that \eqref{eqn:asymptotic} is true for all $d \in D$. This finishes the construction of $u$. In a similar way the path $w$ can be constructed from an asymptotic unitary equivalence between $\varphi^{-1}(1 \otimes d)$ and the identity permuting the tensor factors of $1 \otimes T_1 \otimes 1 \otimes T_2 \otimes \dots$.
\end{proof}

Next we will have a closer look at the compact operators $\bK$ as a $\bT$-algebra with respect to an action that has every character appearing with infinite multiplicity. To this end let $H_0$ be an infinite-dimensional separable Hilbert space and let $H = \ell^2(\Z) \otimes H_0$. Let $\{\delta_i\}_{i \in \Z}$  be the standard orthonormal basis of $\ell^2(\Z)$. In the following we will identify $H_0$ with $\{\delta_0 \otimes \eta \ | \ \eta \in H_0\}$. For $z \in \bT$ define 
\begin{equation} \label{eqn:action_on_K}
	U_z \colon H \to H \quad ; \quad U_z(\delta_k \otimes \eta) = z^k\,\delta_k \otimes \eta\ .	
\end{equation}
As alluded to, this gives a unitary representation of $\bT$ on $H$ that contains each irreducible representation with infinite multiplicity. Let 
\[
	H_k = \{ \delta_k \otimes \eta \ | \ \eta \in H_0 \}\ ,
\]
which is the character subspace of $H$ corresponding to $\chi(z) = z^k$. Let $\bK = \bK(H)$ be the compact operators on $H$. Define $\pi(z) = \Ad_{U_z}$ to be the representation of $\bT$ on $\bK$ corresponding to $U$ and let $\bK^\bT$ be the fixed-point algebra of $\bK$. 

An operator $T \in \bK$ lies in $\bK^\bT$ if and only if it commutes with $U_z$ for all $z \in \bT$, which is the case if and only if it maps each $H_k$ to itself.  Therefore 
\[
\bK^\bT \cong \bigoplus_{k \in \Z} \bK(H_0) \cong C_0(\Z) \otimes \bK(H_0)\ .
\] 
In fact, $\bK$ is equivariantly isomorphic to a crossed product $\bK^\bT \rtimes \Z$. To see this let $T = [T_i]_{i \in \Z} \in \bK^\bT$ and define $\rho \colon \Z \to \Aut{\bK^\bT}$ by $[\rho(k)(T)]_j = T_{j+k}$. The map
\begin{equation} \label{eqn:crossed_prod_iso}
	\bK^\bT \rtimes_{\rho} \Z \to \bK
\end{equation}
which is the inclusion $\bK^\bT \to \bK$ on the fixed-point algebra and sends $n \in \Z$ to the unitary operator
\[
	W_n \colon H \to H \quad ; \quad W_n(\delta_{k} \otimes \eta) = \delta_{k-n} \otimes \eta\ .
\]
is an isomorphism. To see why, let $T \colon H_k \to H_{k+n}$ be any compact operator that is $0$ on the complement of $H_k$. Then we have $TW_n \in \bK^\bT$ and $(TW_n, -n) \in \bK^\bT \rtimes \Z$ is mapped to $T \in \bK$. These operators span a dense subalgebra, which proves that \eqref{eqn:crossed_prod_iso} is surjective. Injectivity follows similarly by considering the restrictions to the ``matrix entries'' $H_k \to H_{k+n}$. The isomorphism \eqref{eqn:crossed_prod_iso} is an instance of Takai duality, since
\[
\bK^\bT \rtimes_{\rho} \Z \cong (C_0(\Z) \rtimes \Z) \otimes \bK(H_0) \cong \bK(\ell^2(\Z) \otimes H_0) = \bK
\]
and this isomorphism intertwines $\pi$ with dual action of $\bT$ on $\bK^\bT \rtimes \Z$. 

Lastly, we observe the following property of the equivariant automorphisms of~$\bK$. Suppose that $\alpha = \Ad_{W} \in \eqAut{\bT}{\bK}$. This means that $\alpha \circ \pi(z) = \pi(z) \circ \alpha$ for all $z \in \bT$, which implies
\begin{equation} \label{eqn:equiv_unitary}
	WU_zTU_z^*W^* = U_zWTW^*U_z^* \quad \Rightarrow \quad [U_z^*W^*U_zW,T] = 0
\end{equation}
for all $T \in \bK$. Therefore $U_zW = \chi(z)\,WU_z$ with $\chi(z) \in U(1)$ and it is straightforward to check that $\chi \colon \bT \to U(1)$ is a character, ie.\ $\chi(z) = z^k$ for some $k \in \Z$. In light of this we define $U^\bT(H) \subset U(H)$ to be the subgroup given by
\[
	U^\bT(H) = \{ W \in U(H) \ | \ U_zW = \chi(z)\,WU_z \text{ for some character } \chi \text{ of } \bT\}\ .
\] 
Since the Pontryagin dual $\hat{\bT}$ is isomorphic to $\Z$, the above gives a group homomorphism 
\begin{equation} \label{eqn:pi0-hom_for_K}
	\theta \colon \eqAut{\bT}{\bK} \to \Z \ .
\end{equation}
We will see in Sec.~\ref{sec:BrauerGroup} how $\theta$ gives rise to the Phillips-Raeburn obstruction for $\bT$-equivariant bundles \cite{paper:PhillipsRaeburn-locunitary}. Note that $\theta(\Ad_W) = n$ implies that $W$ restricts to a unitary isomorphism $H_k \to H_{n+k}$ for each $k \in \Z$. As we will see, $\theta$ really has to be understood as a homomorphism $\theta \colon \eqAut{\bT}{\bK} \to GL_1(R(\bT))$, and we will generalise it to $\eqAut{\bT}{D \otimes \bK}$ for an infinite UHF-algebra $D$. 

\begin{theorem} \label{thm:Aut_eT_K_contractible}
	Let $H$ be as above and let $e \in \bK$ be a rank $1$-projection onto a subspace of $H_0$. Denote by $\eqAut{\bT,e}{\bK}$ the stabiliser of $e$ in $\eqAut{\bT}{\bK}$. We have a homeomorphism 
	\[
		\eqAut{\bT,e}{\bK} \cong \{ W \in U^{\bT}(H) \ | \left.W\right|_{eH} = 1_{eH} \} =: G
	\]
	where $G$ is equipped with the strong topology. Moreover, $G$ is contractible.
\end{theorem}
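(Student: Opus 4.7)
The plan is to first identify the stabilizer with $G$ purely algebraically, then verify the identification is a homeomorphism using a canonical section of the scalar $U(1)$-bundle, and finally decompose $G$ into a product of unitary groups of infinite-dimensional Hilbert spaces, each contractible in the strong topology.

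\textbf{Identification as sets.} Every $*$-automorphism of $\bK$ is inner, so the discussion preceding \eqref{eqn:pi0-hom_for_K} gives $\eqAut{\bT}{\bK} \cong U^\bT(H)/U(1)$ where $U(1)$ sits in $U^\bT(H)$ as the scalars $\lambda \cdot 1_H$. The condition $\Ad_W(e) = e$ is equivalent to $We = eW$, i.e., $W$ preserves $eH$ setwise, and since $eH$ is one-dimensional this means $W|_{eH} = \lambda \cdot 1_{eH}$ for some $\lambda \in U(1)$. Multiplying by $\bar\lambda$ produces a unique representative in $G$, giving a bijection $G \to \eqAut{\bT,e}{\bK}$. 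To show this is a homeomorphism, the key observation is that the map $\{W \in U^\bT(H) : W|_{eH}\text{ is scalar}\} \to G$ sending $W \mapsto (W|_{eH})^{*}\,W$ is continuous in the strong topology (it suffices to evaluate at a fixed unit vector $\xi_0 \in eH$), and it provides a continuous global section, hence a trivialization, of the $U(1)$-principal bundle over $\eqAut{\bT,e}{\bK}$. This combined with the standard homeomorphism $\Aut{\bK} \cong U(H)/U(1)$ (pointwise-norm on the left, quotient strong topology on the right) gives the desired topological identification.

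\textbf{Further reduction.} The next step is to show that every $W \in G$ in fact commutes with each $U_z$, i.e., $\theta(\Ad_W) = 0$. By definition of $U^\bT(H)$ there exists $n = n(W) \in \Z$ with $U_z W = z^n W U_z$. Applied to any unit vector $\xi \in eH \subset H_0$, we have $U_z \xi = \xi$ and $W\xi = \xi$, so $\xi = U_z W \xi = z^n W U_z \xi = z^n \xi$ for every $z \in \bT$, forcing $n = 0$. Therefore each $W \in G$ preserves every character subspace $H_k$, yielding a decomposition $W = \bigoplus_{k \in \Z} W_k$ with $W_k \in U(H_k)$ and the additional constraint $W_0|_{eH} = 1_{eH}$. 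A short argument with $\|\xi\|^2 = \sum_k \|\xi_k\|^2$ and dominated convergence shows that the strong topology on $G$ coincides with the product of the strong topologies on the factors, so
\[
	G \cong G_0 \times \prod_{k \in \Z \setminus \{0\}} U(H_k),
\]
where $G_0 = \{U \in U(H_0) : U|_{eH} = 1_{eH}\}$.

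\textbf{Contractibility.} Each factor for $k \neq 0$ is the unitary group of an infinite-dimensional separable Hilbert space, hence contractible in the strong topology by the theorem of Dixmier-Douady. The remaining factor $G_0$ is canonically identified with $U((eH)^\perp \cap H_0)$, again the unitary group of a separable infinite-dimensional Hilbert space, so it too is contractible. Choosing explicit strongly continuous contractions $f_k \colon U(H_k) \times [0,1] \to U(H_k)$ (and similarly $f_0$ for $G_0$), the product map $((W_k)_k, t) \mapsto (f_k(W_k,t))_k$ is continuous in the product topology and provides a contraction of $G$.

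The main obstacle is the topological bookkeeping: ensuring that the quotient topology on the stabilizer agrees with the strong topology on $G$ (handled by the explicit section $W \mapsto (W|_{eH})^{*}W$), and that the product topology on $\prod_k U(H_k)$ matches the strong topology on $G \subset U(H)$ (handled by dominated convergence). Once these identifications are in place, contractibility reduces to the classical Dixmier-Douady result applied factor-by-factor.
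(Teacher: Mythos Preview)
Your proof is correct and follows essentially the same route as the paper's: the same section $\Ad_W \mapsto \lambda_W^*W$ (which you write as $(W|_{eH})^*W$) to invert the quotient map, the same observation that $W\xi_0=\xi_0$ with $\xi_0\in eH\subset H_0$ forces the character $\chi$ to be trivial, and the same product decomposition $G\cong U((1-e)H_0)\times\prod_{k\neq 0}U(H_k)$ followed by factorwise contractibility. The only genuine addition is your explicit check (via the $\varepsilon/3$--dominated convergence argument) that the strong topology on $G$ agrees with the product of the strong topologies on the factors, which the paper leaves implicit.
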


\begin{proof}
	By \cite[Prop.~8.1]{book:Lance} the strict topology on $U(H)$ as the unitary group of the multiplier algebra $M(\bK)$ is the same as the strong topology. Therefore the natural map $G \to \eqAut{\bT,e}{\bK}$ given by $W \mapsto \Ad_W$ is continuous with respect to the point-norm topology on $\eqAut{\bT,e}{\bK}$ and the strong topology on~$G$. It suffices to see that it has a continuous inverse. Let $W \in U(H)$ with $\Ad_W \in \eqAut{\bT,e}{\bK}$	and note that $eWe = eW = We = \lambda_We$ for some $\lambda_W \in U(1)$. Consider the map
	\[
		\eqAut{\bT,e}{\bK} \to G \quad ; \quad \Ad_W \mapsto \lambda_W^*W\ .
	\]
	This is well-defined, since $\lambda_W^*W$ does not change when $W$ is multiplied by an element in $U(1)$.  Moreover, $\left.\lambda_W^*W\right|_{eH} = 1$ holds by definition and $\lambda_W^*W \in U^{\bT}(H)$ by the observations made in \eqref{eqn:equiv_unitary}. 
	
	To check continuity let $U_e(H) \subset U^\bT(H)$ be the subgroup of operators commuting with $e$ and let $\eta_0 \in eH$ be a unit vector. The map $U_e(H) \to \C$ defined by $W \mapsto \lambda_W$ is continuous with respect to the strong topology on the domain, since $W\eta_0 = \lambda_W\eta_0$. Hence, $U_e(H) \to G$ given by $W \mapsto \lambda_W^*W$ is also strongly continuous. By \cite[Prop.~1.6]{book:RaeburnWilliams} conjugation provides a homeomorphism $\Aut{\bK} \cong PU(H)$, where the right hand side is equipped with the quotient of the strong topology. Combining these two statements shows that $\eqAut{\bT,e}{\bK} \to G$ given by $\Ad_W \mapsto \lambda_W^* W$ is continuous. It is clearly the inverse of $G \to \eqAut{\bT,e}{\bK},\ W \mapsto \Ad_W$.
	
	It remains to show that $G$ is contractible. Let $W \in G$. Since $e$ projects to a subspace of $H_0$ we have $U_z e = e$ and therefore
	\[
		\eta_0 = U_z W\eta_0 = \chi(z)\,WU_z\eta_0 = \chi(z) \eta_0
	\]
	for all $z \in \bT$ and $\eta_0 \in eH$. This implies that $\chi$ is trivial and therefore $\left.W\right|_{H_k} \colon H_k \to H_k$. Thus, every element in $G$ is a block sum of unitaries, more precisely
	\[
		G \cong U((1-e)H_0) \times \prod_{k \in \Z \atop k \neq 0} U(H_0)
	\]
	where the right hand side carries the product topology. Now the statement follows from the contractibility of the unitary group of a separable Hilbert space in the strong topology.
\end{proof}

\begin{lemma} \label{lem:path_in_K}
	Let $H$ be as above and let $e \in \bK$ be a rank $1$-projection onto a subspace of $H_0$. The group $\bT$ acts on $\bK \otimes \bK$ diagonally. There exists a point-norm continuous path 
	\[
		\gamma  \colon [0,1] \to \hom_{\bT}(\bK, \bK \otimes \bK)
	\]
	into the $\bT$-equivariant homomorphisms with the following properties: 
	\begin{enumerate}[(a)]
		\item $\gamma(0)(T) = T \otimes e$ and $\gamma(1)(T) = e \otimes T$,
		\item $\gamma$ restricts to $(0,1) \to \eqIsom{\bT}{\bK}{\bK \otimes \bK}$ 
		\item $\gamma(t)(e) = e \otimes e$ for all $t \in [0,1]$.
	\end{enumerate}
\end{lemma}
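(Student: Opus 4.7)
The plan is to realise $\gamma$ as conjugation by a suitable path of $\bT$-equivariant isometries. Fix a $\bT$-invariant unit vector $\eta_0 \in eH$ (possible because $e$ projects onto a one-dimensional subspace of the trivial character space $H_0$) and define the two corner isometries $v_0, v_1\colon H \to H \otimes H$ by $v_0(\xi) = \xi \otimes \eta_0$ and $v_1(\xi) = \eta_0 \otimes \xi$. Both are $\bT$-equivariant because $\eta_0$ is $\bT$-fixed, and a short matrix-coefficient computation gives $v_0 T v_0^* = T \otimes e$, $v_1 T v_1^* = e \otimes T$, and $v_0 e v_0^* = v_1 e v_1^* = e \otimes e$.

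The output $\gamma$ will be defined by $\gamma(t)(T) = V(t) T V(t)^*$, where $V\colon [0,1] \to \mathcal{B}(H, H \otimes H)$ is a point-strong continuous path of $\bT$-equivariant isometries with $V(0) = v_0$, $V(1) = v_1$, $V(t)\eta_0 = \eta_0 \otimes \eta_0$ for every $t$, and $V(t)$ a $\bT$-equivariant unitary for $t \in (0,1)$. Given such $V$, all required properties are immediate. Each $\gamma(t)$ is a $\bT$-equivariant $*$-homomorphism because $V(t)$ is an equivariant isometry. Point-norm continuity of $\gamma$ follows from point-strong continuity of $V$ together with compactness of both $T$ and $T^*$, which turns strong convergence of bounded nets into norm convergence upon left- or right-multiplication by $T$. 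Condition (a) is then the endpoint calculation, (b) holds because conjugation by a unitary is a $C^*$-algebra isomorphism, and (c) holds because $V(t)(eH) = \C(\eta_0 \otimes \eta_0)$, so $V(t) e V(t)^* = e \otimes e$.

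For the construction of $V$ itself, I would use that every relevant $\bT$-equivariant Hilbert space---namely $H$, $H \otimes H$, $v_0(H)^\perp = H \otimes (1-e)H$ and $v_1(H)^\perp = (1-e)H \otimes H$---contains each irreducible $\bT$-representation with infinite multiplicity and is therefore equivariantly isomorphic to $H$. Using this I would fix a $\bT$-equivariant unitary $W\colon H \to H \otimes H$ with $W\eta_0 = \eta_0 \otimes \eta_0$ (choose any equivariant unitary and post-compose with an element of the stabiliser of a unit vector in the trivial character subspace of $H \otimes H$), and build $V$ as the concatenation of two sub-paths $v_0 \rightsquigarrow W$ on $[0,1/2]$ and $W \rightsquigarrow v_1$ on $[1/2,1]$. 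Each sub-path is a $\bT$-equivariant version of the standard strong-topology homotopy that deforms a corner embedding into a unitary through isometries unitary except at the corner endpoint, constructed from the orthogonal decomposition $H \otimes H = v_i(H) \oplus v_i(H)^\perp$ together with the $\bT$-equivariant Hilbert absorption $H \oplus H \cong H$ (as $\bT$-representations). The constraint $V(t)\eta_0 = \eta_0 \otimes \eta_0$ can be preserved throughout, since $\eta_0$ and $\eta_0 \otimes \eta_0$ both lie in trivial character subspaces and the interpolation can be carried out inside the stabiliser of this pair.

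I expect the main obstacle to be precisely the passage, inside each sub-path, from a corner isometry $v_i$ (with equivariantly infinite cokernel) to the unitary $W$ through $\bT$-equivariant isometries that are \emph{genuinely} unitary for $t \neq i$. This is a strictly strong-topology phenomenon: in operator norm the cokernel dimension of a $\bT$-equivariant isometry is a discrete invariant that no norm-continuous path can alter. The homotopy exists because the stabiliser of $\eta_0$ in $U^\bT(H)$ decomposes as the product of strong-topology unitary groups of separable infinite-dimensional Hilbert spaces, one for each character, and is therefore contractible by Kuiper's theorem; within this contractible framework the interpolation from an equivariant isometry to a unitary is a routine Eilenberg-swindle construction in the strong topology.
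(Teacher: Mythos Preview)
Your approach is correct and takes a genuinely different, more direct route than the paper. The paper exploits the crossed-product description $\bK \cong \bK^{\bT} \rtimes \Z$ (and the analogous one for $\bK \otimes \bK$): it first builds the path on the fixed-point algebras $\bK^{\bT} \cong C_0(\Z) \otimes \bK(H_0)$ by tensoring the non-equivariant path of \cite[Thm.~2.5]{paper:DadarlatP-DD-theory} with $\id{C_0(\Z)}$, then passes back to $\bK$ by forming the crossed product with $\Z$. This yields two half-paths (one for each corner embedding) ending in possibly different equivariant isomorphisms, which are then glued using the path-connectedness of $\eqAut{\bT,e}{\bK}$ from Theorem~\ref{thm:Aut_eT_K_contractible}. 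You instead work directly with $\bT$-equivariant isometries $H \to H \otimes H$, decompose along the character subspaces $H_k$, and reduce to the same non-equivariant interpolation problem on each summand; by choosing a single midpoint unitary $W$ for both sub-paths you avoid the gluing step altogether. Your route is more elementary (no crossed products, no Takai duality), while the paper's route makes visible the algebraic structure that recurs throughout the article.

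One point deserves tightening. Your last paragraph invokes contractibility of the stabiliser of $\eta_0$ in $U^{\bT}(H)$ as the reason the interpolation from the corner isometry $v_i$ to the unitary $W$ exists. Contractibility (Kuiper) connects unitaries to unitaries; it does not by itself produce a strong-continuous path from a \emph{non-unitary} isometry into the unitary group. What you actually need on each character subspace is the explicit construction---carried out non-equivariantly in \cite[Thm.~2.5]{paper:DadarlatP-DD-theory} and also cited in the paper's proof---that strongly approximates an isometry with infinite-dimensional cokernel by genuine unitaries (agree with the isometry on an exhausting family of finite-dimensional subspaces and fill in the cokernel unitarily on the complement). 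That, applied characterwise and summed, is the real content of your ``Eilenberg swindle''; once it is stated this way your argument is complete.
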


\begin{proof}
	The construction of such a path in the non-equivariant setting is described in the proof of \cite[Thm.~2.5]{paper:DadarlatP-DD-theory}. The idea in the equivariant case is to obtain a path on fixed-point algebras using the non-equivariant result and then extend it to $\bK \otimes \bK$ by the crossed product decomposition.

	Denote the diagonal action of $\bT$ on $\bK \otimes \bK$ by $\Delta$ and the corresponding fixed-point algebra by $(\bK \otimes \bK)^\Delta$. Let $\hat{H}_m \subset H \otimes H$ be the character subspace of $\Delta$ for the character $z \mapsto z^m$, ie.\  
	\[
		\hat{H}_m = \bigoplus_{k + l = m} H_k \otimes H_l \ .
	\]
	Since $T \in (\bK \otimes \bK)^{\Delta}$ if and only if $T$ maps each $\hat{H}_m$ to itself, we see that 
	\begin{equation} \label{eqn:equiv_isos_compact}
		(\bK \otimes \bK)^{\Delta} \cong \bigoplus_{m \in \Z} \bK(\hat{H}_m) \cong \bigoplus_{m \in \Z} \bK(\hat{H}_0) \cong C_0(\Z) \otimes \bK(\hat{H}_0)\ .
	\end{equation}
	While the first and the last isomorphism in this line are canonical, there are two natural choices for the second one given either by the conjugation with $W_m \otimes \id{}$ or with $\id{} \otimes W_m$ (ie.\ shifting the first index in the tensor product to identify $\hat{H}_m$ with $\hat{H}_0$ or the second one). Each option induces a $\bT$-equivariant isomorphism 
	\[
		(\bK \otimes \bK)^{\Delta} \rtimes \Z \to \bK \otimes \bK
	\] 
	and we will denote these by $\theta_l$ and $\theta_r$ for $W_m \otimes \id{}$ and $\id{} \otimes W_m$ respectively. Both of them intertwine the dual action on the left hand side with the diagonal action on the right. Choose an isomorphism $\psi_1 \colon \bK(H_0) \to \bK(\hat{H}_0)$ such that $\psi_1(e) = e \otimes e \in \bK(H_0 \otimes H_0) \subset \bK(\hat{H}_0)$. As explained in the proof of \cite[Thm.~2.5]{paper:DadarlatP-DD-theory} there is a continuous path 
	\[
		\gamma_1 \colon [0,1] \to \hom(\bK(H_0), \bK(\hat{H}_0))
	\] 
	with the properties
	\begin{enumerate}[(a)]
		\item $\gamma_1(0)(T) = T \otimes e$, $\gamma_1(1)(T) = \psi_1(T)$,
		\item $\gamma_1$ restricts to $(0,1] \to \text{Iso}(\bK(H_0), \bK(\hat{H}_0))$ 
		\item $\gamma_1(t)(e) = e \otimes e$ for all $t \in [0,1]$.
	\end{enumerate}
	Note that $\id{C_0(\Z)} \otimes \gamma_1$ takes values in $\hom(C_0(\Z) \otimes \bK(H_0), C_0(\Z) \otimes \bK(\hat{H}_0))$. Using the isomorphism \eqref{eqn:equiv_isos_compact} with $W_m \otimes \id{}$ we can extend this to a continuous path in $\Z$-equivariant $*$-homomorphisms of the form
	\[
		\bar{\gamma}_1 \colon [0,1] \to \hom_{\Z}(\bK^\bT, (\bK \otimes \bK)^\Delta)\ .
	\]
	In particular, by our choice of $\Z$-action on $(\bK \otimes \bK)^\Delta$, we have that $\bar{\gamma}_1(0)(T) = T \otimes e$ is $\Z$-equivariant and $\bar{\gamma}_1(t)(e) = e \otimes e$ for $t \in [0,1]$. Forming the crossed product with the $\Z$-action pointwise and applying $\theta_l$ results in  
	\[
		\hat{\gamma}_1 \colon [0,1] \to \hom_{\bT}(\bK, \bK \otimes \bK)\ .
	\]
	By construction $\hat{\gamma}_1$ has the property that $\hat{\gamma}_1(0)(T) = T \otimes e$, it is an isomorphism for all $t \in (0,1]$ and $\hat{\gamma}_1(t)(e) = e \otimes e$ for all $t \in [0,1]$.

	Repeating this construction with $T \mapsto e \otimes T$, using $\id{} \otimes W_m$ instead of $W_m \otimes \id{}$ gives another continuous path $\hat{\gamma}_2$ with the analogous properties that ends in a potentially different isomorphism $\hat{\gamma}_2(1)$ with $\hat{\gamma}_1(t)(e) = e \otimes e$ for all $t \in [0,1]$. But we have seen in Theorem~\ref{thm:Aut_eT_K_contractible} that $\eqAut{\bT,e}{\bK}$ is contractible, so in particular path-connected. Hence, there is a continuous path running through $\eqAut{\bT}{\bK}$ connecting $\hat{\gamma}_1(1)$ and $\hat{\gamma}_2(1)$. Concatenating these three paths appropriately gives the result.
\end{proof}

By Lem.~\ref{lem:action_ssa} there is a $\bT$-equivariant isomorphism $\varphi \colon D \to D \otimes D$ and a continuous path $u \colon [0,1) \to U(D \otimes D)$ with image in the $\bT$-invariant unitaries such that 
	\[
		\lim_{t \to 1} \lVert (d \otimes 1) - u(t)^*\varphi(d)u(t) \rVert = 0
	\]
	and $u(0) = 1 \otimes 1$. This gives a continuous path
	\[
		\gamma_l \colon [0,1] \to \hom_{\bT}(D, D \otimes D) \quad ; \quad t \mapsto \begin{cases}
			d \mapsto u(t)^*\varphi(d)u(t) & \text{for } t \in [0,1) \\
			d \mapsto d \otimes 1	& \text{for } t = 1
		\end{cases}
	\]
	between $\varphi$ and the left tensor embedding $d \mapsto d \otimes 1$, where the equivariant homomorphisms are equipped with the pointwise-norm topology. Likewise, there is a similar path $\gamma_r$, constructed using $w$ from Lem.~\ref{lem:action_ssa}, that starts at~$\varphi$ and ends in the right tensor embedding $d \mapsto 1 \otimes d$. Concatenating $\gamma_l$ and $\gamma_r$ (and reversing one of them) we end up with 
	\begin{equation} \label{eqn:path_gamma}
		\gamma \colon [0,1] \to \hom_{\bT}(D, D \otimes D)
	\end{equation}
	with the properties that $\gamma(0)(d) = d \otimes 1$, $\gamma(1)(d) = 1 \otimes d$ and $\gamma(t)$ is an isomorphism for $t \in (0,1)$ with $\gamma(\tfrac{1}{2}) = \varphi$. 
	
Let $\delta \colon [0,1] \to \hom_{\bT}(\bK, \bK \otimes \bK)$ be a continuous path from Lem.~\ref{lem:path_in_K}. Let $\tau \colon D \otimes D \otimes \bK \otimes \bK \to D \otimes \bK \otimes D \otimes \bK$ be the map interchanging the middle two tensor factors. The continuous path
	\begin{equation} \label{eqn:path_beta}
		\beta \colon [0,1] \to \hom_{\bT}(D \otimes \bK, D \otimes \bK \otimes D \otimes \bK)
	\end{equation} 
	given by $\tau \circ (\gamma \otimes \delta)$ satisfies $\beta(0)(a) = a \otimes (1 \otimes e)$, $\beta(1)(a) = (1 \otimes e) \otimes a$ and $\beta(t)$ is an isomorphism for $t \in (0,1)$.	 Let $\psi = \beta(\tfrac{1}{2})$. Using $\beta$ we now construct the following homotopy
\begin{align} \label{eqn:key_homotopy}
	H_l &\colon \eqAut{\bT}{D \otimes \bK} \times [0,1] \to \eqAut{\bT}{D \otimes \bK} \notag \\
	H_l(\alpha,t) &= \begin{cases}
		\beta(\tfrac{1}{2}t)^{-1} \circ (\alpha \otimes \id{}) \circ \beta(\tfrac{1}{2}t) & \text{for } t \in (0,1] \ ,\\
		\alpha & \text{for } t= 0\ .
	\end{cases}
\end{align}
between the identity map on $\eqAut{\bT}{D \otimes \bK}$ and $\alpha \mapsto \psi^{-1} \circ (\alpha \otimes \id{}) \circ \psi$. Using the other half of the unit interval and $\id{} \otimes \alpha$ instead of $\alpha \otimes \id{}$ we can also construct a homotopy $H_r$ between the identity and $\alpha \mapsto \psi^{-1} \circ (\id{} \otimes \alpha) \circ \psi$. The paths $\gamma$ and $\beta$ and the homotopies $H_l$ and $H_r$ will play a crucial role in the proofs of Thm.~\ref{thm:Aut_eq_stabiliser_contractible}, Lem.~\ref{lem:pi0_equiv} and Lem.~\ref{lem:stable_EHI-group}.

\begin{theorem} \label{thm:Aut_eq_stabiliser_contractible}
	Let $D = \Endo{V}^{\otimes \infty}$ the $\bT$-algebra arising from a $\bT$-re\-pre\-sen\-ta\-tion $V$ and let $e \in \bK(H_0)$ be a rank 1-projection. 	The topological group $\eqAut{\bT, 1 \otimes e}{D \otimes \bK}$ of $\bT$-equivariant automorphisms fixing the element $1 \otimes e$ is contractible. Likewise, the group $\eqAut{\bT}{D}$ of unital $\bT$-equivariant automorphisms of $D$ is contractible.
\end{theorem}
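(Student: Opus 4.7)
The plan is to build a direct contraction of $\eqAut{\bT, 1 \otimes e}{D \otimes \bK}$ using the path $\beta$ from \eqref{eqn:path_beta} itself, rather than iterating the two homotopies $H_l$ and $H_r$. Concretely, I would define
\[
    H \colon \eqAut{\bT,1\otimes e}{D \otimes \bK} \times [0,1] \to \eqAut{\bT,1\otimes e}{D \otimes \bK}
\]
by $H(\alpha,t) = \beta(t)^{-1} \circ (\alpha \otimes \id{D \otimes \bK}) \circ \beta(t)$ for $t \in (0,1)$, and by $H(\alpha,0) = \alpha$, $H(\alpha,1) = \id{D \otimes \bK}$. The formula makes sense on $(0,1)$ because $\beta(t)$ is a $\bT$-equivariant $*$-isomorphism there.

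First I would check that $H(\alpha,t)$ stabilises $1 \otimes e$. Since $\gamma(t)$ is a $*$-homomorphism with $\gamma(t)(1_D) = 1_D \otimes 1_D$ for every $t$, and $\delta(t)(e) = e \otimes e$ by Lem.~\ref{lem:path_in_K}(c), we have $\beta(t)(1 \otimes e) = (1 \otimes e) \otimes (1 \otimes e)$ throughout $[0,1]$; combined with $\alpha(1 \otimes e) = 1 \otimes e$ this yields $H(\alpha,t)(1 \otimes e) = 1 \otimes e$ for $t \in (0,1)$. The prescribed endpoint values are consistent with the limits one would like: at $t = 0$ one has $(\alpha \otimes \id{})(\beta(0)(a)) = \alpha(a) \otimes (1 \otimes e) = \beta(0)(\alpha(a))$, so formally $H(\alpha, 0) = \alpha$; at $t = 1$ the stabilising assumption gives $(\alpha \otimes \id{})((1 \otimes e) \otimes a) = (1 \otimes e) \otimes a = \beta(1)(a)$, so formally $H(\alpha, 1) = \id{D \otimes \bK}$.

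The hard part will be the continuity of $H$ in the point-norm topology, in particular at the degenerate endpoints where $\beta$ is only an embedding. In the interior $t \in (0,1)$ the map $\beta(t)^{-1}$ is an isometric equivariant isomorphism, and joint continuity will follow from continuity of inversion on the space of $*$-isomorphisms in the point-norm topology (a one-line estimate $\|\phi_n^{-1}(b) - \phi^{-1}(b)\| = \|b - \phi_n(\phi^{-1}(b))\| \to 0$). At the endpoints I would rely on the identity
\[
    \|\beta(t)^{-1}(\beta(s)(b)) - b\| = \|\beta(s)(b) - \beta(t)(b)\|
\]
for any $b \in D \otimes \bK$, $t \in (0,1)$ and $s \in [0,1]$, combined with a standard splitting $\beta(t)^{-1}(c_n) = \beta(t)^{-1}(c_n - c) + \beta(t)^{-1}(c)$ with $c = \beta(s)(b_0)$ and the continuity of $\beta$. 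Since every automorphism in sight is an isometry, none of the resulting bounds depend on $\alpha$, which delivers joint continuity of $(\alpha, t) \mapsto H(\alpha, t)(a)$ for each fixed $a$, hence continuity of $H$.

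The second statement, contractibility of $\eqAut{\bT}{D}$, follows by the identical argument with $\gamma$ in place of $\beta$: set $K(\alpha, t) = \gamma(t)^{-1} \circ (\alpha \otimes \id{D}) \circ \gamma(t)$ on $(0,1)$, together with $K(\alpha, 0) = \alpha$ and $K(\alpha, 1) = \id{D}$. The role previously played by $\alpha(1 \otimes e) = 1 \otimes e$ is now taken by unitality $\alpha(1_D) = 1_D$, and the continuity analysis at the endpoints is verbatim.
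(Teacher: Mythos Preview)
Your approach is essentially identical to the paper's own proof: it too defines the contraction directly as $H(\alpha,t) = \beta(t)^{-1} \circ (\alpha \otimes \id{}) \circ \beta(t)$ on $(0,1)$ with endpoint values $\alpha$ and $\id{D\otimes\bK}$ (and the analogous formula with $\gamma$ for $\eqAut{\bT}{D}$), citing the non-equivariant case for the continuity argument that you spell out. Your observation $\beta(t)(1 \otimes e) = (1 \otimes e)^{\otimes 2}$ for all $t$ is likewise noted explicitly in the paper's proof.
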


\begin{proof}
	Let $\gamma$ be as in \eqref{eqn:path_gamma}. As in the non-equivariant case (see \cite[Thm.~2.3 and Thm.~2.5]{paper:DadarlatP-DD-theory}) the homotopy
\begin{align*} 
	H &\colon \eqAut{\bT}{D} \times [0,1] \to \eqAut{\bT}{D} \notag \\
	H(\alpha,t) &= \begin{cases}
		\id{D} & \text{for } t= 1\ ,\\
		\gamma(t)^{-1} \circ (\alpha \otimes \id{D}) \circ \gamma(t) & \text{for } t \in (0,1) \ ,\\
		\alpha & \text{for } t= 0\ .
	\end{cases}
\end{align*}
	provides a contraction of $\eqAut{\bT}{D}$ onto $\id{D}$ (compare with \cite[Lem.~2.2]{paper:DadarlatP-DD-theory}). 
	
	Replacing $\eqAut{\bT}{D}$ by the stabiliser $\eqAut{\bT, 1 \otimes e}{D \otimes \bK}$ and using $\beta$ as in~\eqref{eqn:path_beta} instead of $\gamma$ in the definition of $H$ gives a contraction of $\eqAut{\bT, 1 \otimes e}{D \otimes \bK}$. Note in particular that $\beta(t)(1 \otimes e) = (1 \otimes e)^{\otimes 2}$ for all $t \in [0,1]$.
\end{proof}

The following simple algebraic observation is the key to understanding the invertible elements in localisations of the representation ring $\Rep{\bT}$. 

\begin{lemma} \label{lem:units_in_loc}
	Let $R$ be a unique factorization domain and let $S \subset R$ be a multiplicative subset with $0 \notin S$. If $u \in R[S^{-1}]$ is a unit, then 
	\[
		u = v\,p_1^{k_1}\dots p_r^{k_r}
	\]
	where $v \in R$ is a unit, $p_j$ are primes of elements in $S$ and $k_j \in \Z$.
\end{lemma}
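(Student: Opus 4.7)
The plan is to pass from the identity $u \cdot u^{-1} = 1$ in $R[S^{-1}]$ back to a factorisation identity inside $R$, and then exploit unique factorisation to pin down which primes can appear in $u$.

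First, write $u = a/s$ and $u^{-1} = b/t$ with $a,b \in R$ and $s,t \in S$. The relation $ab/(st) = 1$ in $R[S^{-1}]$ means there exists $c \in S$ with $c(ab - st) = 0$ in $R$. Since $R$ is a UFD it is in particular an integral domain, and $0 \notin S$ gives $c \neq 0$, so we may cancel to obtain the honest equality
\[
    ab = st \qquad \text{in } R.
\]
Factor $s$ and $t$ into primes in $R$, say $s = v_s\, p_1^{s_1}\cdots p_r^{s_r}$ and $t = v_t\, p_1^{t_1}\cdots p_r^{t_r}$, where $v_s, v_t \in R^\times$ and $p_1,\dots,p_r$ exhaust the primes dividing $st$. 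By construction each $p_j$ is a prime divisor of an element of $S$, so it counts as a ``prime of an element in $S$'' in the sense of the statement.

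Next I would show that every prime factor of $a$ must already appear among $p_1,\dots,p_r$. Indeed, if $q$ is a prime of $R$ dividing $a$, then $q \mid ab = st$, and primality of $q$ forces $q \mid s$ or $q \mid t$, hence $q$ is associate to some $p_j$. Consequently unique factorisation in $R$ gives a presentation $a = v_a\, p_1^{a_1}\cdots p_r^{a_r}$ with $v_a \in R^\times$. Substituting,
\[
    u = \frac{a}{s} = \frac{v_a}{v_s}\, p_1^{a_1 - s_1}\cdots p_r^{a_r - s_r},
\]
and setting $v = v_a v_s^{-1} \in R^\times$ and $k_j = a_j - s_j \in \Z$ yields the desired expression.

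The argument is almost entirely formal once the two reductions are made: clearing denominators to land inside $R$ (which relies on $R$ being a domain and $0 \notin S$), and invoking primality of the factors of $s$ and $t$ to control the factorisation of $a$. There is no real obstacle; the only point that deserves a moment of care is the passage from $ab/(st)=1$ in $R[S^{-1}]$ to $ab = st$ in $R$, which would fail without the domain hypothesis embedded in ``UFD''.
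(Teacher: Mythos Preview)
Your proof is correct and follows essentially the same idea as the paper's: reduce to a factorisation identity in $R$ and observe that every prime appearing in the numerator must divide some element of $S$. The paper organises this slightly differently---it first replaces $S$ by its saturation and proves the auxiliary claim that any element of $R$ which becomes a unit in $R[S^{-1}]$ must lie in the saturated $S$, then applies this to the prime factors of $u\cdot s$; your direct manipulation of the equation $ab = st$ bypasses that abstraction but reaches the same conclusion by the same mechanism.
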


\begin{proof}
	Without loss of generality we may assume that if $rs \in S$, then $r$ and~$s$ are both elements of $S$, since adding these elements to $S$ does not change the localisation\footnote{A multiplicative subset $S$ with that property is usually called saturated, but we will avoid this terminology, since it clashes with saturated group actions.}. In particular, any prime factor of an element in $S$ is itself in $S$. 	Note that if $p \in R$ is any element such that $p \in R[S^{-1}]$ is a unit, then $p \in S$. Indeed, there exist $x \in R$ and $s \in S$ with
	\[		
		p \frac{x}{s} = 1 \quad \Leftrightarrow \quad p\,x = s \quad \Rightarrow \quad p \in S 
	\]
	by our assumption about $S$. Now let $u \in R[S^{-1}]$ be a unit. There exists $s \in S$ with $u\cdot s \in R$. Let 
	\[
		u \cdot s = v\,q_1^{\ell_1}\dots q_{m}^{\ell_m}
	\]
	be the prime factorization of $u \cdot s$ with a unit $v \in R$, prime factors $q_i \in R$ and $\ell_i \in \N$. Since $u$ and $s$ are both invertible in the localization $R[S^{-1}]$, each of the $q_i \in R$ is also invertible in $R[S^{-1}]$. By our previous observation we have $q_i \in S$. But as we noted above, the prime factors of $s$ are in $S$ as well and the result follows. 
\end{proof}

Given a $\bT$-algebra $A$ there is a natural homomorphism $K_0(A^\bT) \to K_0^\bT(A)$. In case $A$ is unital it is defined as follows: It maps the $K$-theory class of a projection $p \in M_k(A^\bT)$ to the class of the $\bT$-equivariant right Hilbert $A$-module $p\,\C^k \otimes A$ (see \cite[Prop.~7.1.6]{book:Phillips}). In general the map on $K$-theory is induced by a $*$-homomorphism $j \colon A^\bT \to A \rtimes \bT$ constructed in \cite[Lem.~7.1.7]{book:Phillips}. Moreover, recall that a $\bT$-action $\alpha$ on a $C^*$-algebra $A$ is \emph{saturated} if the completion $\overline{A}$ of $A$ with respect to the $A^\bT$-valued inner product 
\[
	\lscal{A^\bT}{x}{y} = \int_\bT \alpha_z(xy^*)\,dz\ ,
\] 
left $A^\bT$-multiplication inherited from $A$ and right multiplication given by $x f = \int_\bT \alpha_z^{-1}(xf(z))\,dz$ for $f \in L^1(\bT,A)$ is an $A^\bT$-$A \rtimes \bT$-equivalence bimodule (see \cite[Def.~7.1.4]{book:Phillips}).

\begin{lemma} \label{lem:saturated}
Let $D = \Endo{V}^{\otimes \infty}$ be the $\bT$-algebra arising from a $\bT$-representation $V$.
\begin{enumerate}[i)]
	\item \label{it:sat_i} The $\bT$-action on $D \otimes \bK$ is saturated.
	\item \label{it:sat_ii} The homomorphism $K_0((D \otimes \bK)^{\bT}) \to K_0^{\bT}(D \otimes \bK)$ is an isomorphism.
\end{enumerate}
\end{lemma}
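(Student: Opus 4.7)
For part (i), I would verify the standard criterion that a $\bT$-action on a $C^*$-algebra $A$ is saturated if and only if each spectral subspace $A_n = \{a \in A \mid \alpha_z(a) = z^n a\}$ satisfies $\overline{\mathrm{span}}(A_n^* A_n) = A^\bT$ for every $n \in \Z$ (this is equivalent to Phillips's definition via fullness of the Rieffel bimodule inner products; see \cite[\S7.1]{book:Phillips}). Applied to $A = D \otimes \bK$, one uses that the diagonal spectral subspace $(D \otimes \bK)_n$ contains $D_m \otimes \bK_{n-m}$ for every $m \in \Z$, and that $(D \otimes \bK)^\bT$ equals the norm-closure of $\bigoplus_m D_m \otimes \bK_{-m}$.

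I would then produce elements of $(D \otimes \bK)_n^* (D \otimes \bK)_n$ of the form
\[
	(1_D \otimes k_1)^* (d \otimes k_2) \;=\; d \otimes k_1^* k_2
\]
with $d \in D_m$, $k_1 \in \bK_n$, $k_2 \in \bK_{n-m}$. The key sub-claim is that $\bK_{-n}\,\bK_{n-m}$ is dense in $\bK_{-m}$: any rank-one operator $\xi \otimes \eta^* \colon H_k \to H_{k-m}$ factors as $(\xi \otimes \zeta^*)(\zeta \otimes \eta^*)$ through a unit vector $\zeta \in H_{k+n-m}$, which exists and is part of an infinite-dimensional subspace by the infinite-multiplicity assumption built into $H = \ell^2(\Z) \otimes H_0$. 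Ranging over $d$ and $k_1^*k_2$ gives density in $D_m \otimes \bK_{-m}$, and summing over $m$ gives density in $(D \otimes \bK)^\bT$, settling (i). Note that this works even though $D_m$ may vanish for some $m$: those summands simply contribute nothing, and the richness of $\bK$ takes over.

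For part (ii), I would chain three standard identifications. First, by equivariant Morita invariance (the $\bT$-representation on $H$ is absorbing) one has $K_0^\bT(D) \cong K_0^\bT(D \otimes \bK)$. Second, the Green--Julg theorem gives $K_0^\bT(D \otimes \bK) \cong K_0((D \otimes \bK) \rtimes \bT)$. Third, saturation from (i) produces the Morita equivalence $(D \otimes \bK)^\bT \sim_M (D \otimes \bK) \rtimes \bT$ via the bimodule $\overline{D \otimes \bK}$, implemented on the algebra level by the map $j \colon (D \otimes \bK)^\bT \to (D \otimes \bK) \rtimes \bT$ of \cite[Lem.~7.1.7]{book:Phillips}. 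Since $j$ is constructed precisely so that $j_*$ equals the natural map $[p] \mapsto [p\,\C^k \otimes (D \otimes \bK)]$ of \cite[Prop.~7.1.6]{book:Phillips}, the composition of the three isomorphisms coincides with the homomorphism in the statement.

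The main obstacle, I expect, is not any single deep step but the careful bookkeeping in (i): one must keep track of the bigraded structure of $D \otimes \bK$ under the diagonal action, and verify that the factorisation argument for rank-one compacts genuinely produces elements in the correct spectral subspaces. Once (i) is in place, (ii) is essentially a composition of black-box isomorphisms whose compatibility is by design.
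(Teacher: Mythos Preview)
Your proposal is correct. For part~(i) you take a somewhat different route from the paper. You apply Rieffel's criterion directly to $A = D \otimes \bK$, carrying the bigrading $D_m \otimes \bK_{n-m}$ through the argument and using the factorisation of rank-one operators in $\bK$ to hit every summand $D_m \otimes \bK_{-m}$ of the fixed-point algebra. The paper instead observes that $\Endo{V}^{\otimes k} \otimes \bK \cong_{\bT} \bK$ as $\bT$-algebras (since $V^{\otimes k} \otimes H \cong H$ by infinite multiplicity), verifies Rieffel's criterion only for $\bK$ itself, and then invokes a permanence result for saturation under inductive limits \cite[Prop.~7.1.13]{book:Phillips} to pass to $D \otimes \bK$. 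The paper's reduction avoids the bookkeeping you anticipate as the main obstacle; your direct approach, on the other hand, is self-contained and makes the role of the absorbing Hilbert space $H$ transparent without appealing to the limit structure of $D$. For part~(ii) the two arguments agree: you spell out the content of \cite[Prop.~7.1.8]{book:Phillips} (Morita equivalence from saturation, Green--Julg, stabilisation) where the paper simply cites it.
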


\begin{proof}
	We first claim that the $\bT$-action on $\bK$ is saturated. This follows from an application of Rieffel's criterion (see \cite[Thm.~7.1.15]{book:Phillips}). Indeed, for $m \in \Z$ we have 
	\[
		\bK_m = \{ T \in \bK \ | \ \pi(z)(T) = z^m\,T \} = \{ T \in \bK \ | \ \left.T\right|_{H_k} \colon H_k \to H_{k+m}\ \forall k \in \Z\}.
	\]	
	and therefore $\overline{\bK_m^*\bK_m} = \overline{\bK_{-m}\bK_m} = \bK_0 = \bK^\bT$ for all $m \in \Z$. Next note that $\Endo{V}^{\otimes k} \otimes \bK \cong_{\bT} \bK$, since $H = \ell^2(\Z) \otimes H_0$ contains all irreducible $\bT$-representations with infinite multiplicity. Hence, by \cite[Prop.~7.1.13]{book:Phillips} the first statement follows. 
	
 The natural homomorphism $K_0((D \otimes \bK)^\bT) \to K_0^\bT(D \otimes \bK)$ is in fact an isomorphism by \eqref{it:sat_i} combined with \cite[Prop.~7.1.8]{book:Phillips}.
\end{proof}

Let $e$ be a fixed invariant rank~$1$-projection in $\bK$ and consider the stabilisation homomorphism $D \to D \otimes \bK$ with $d \mapsto d \otimes e$. By \cite[Lem.~2.7.5]{book:Phillips} this induces an isomorphism $K_0^\bT(D) \to K_0^\bT(D \otimes \bK)$. We will see that this isomorphism and the one in Lem.~\ref{lem:saturated} \eqref{it:sat_ii} are both ring homomorphisms. The ring structure on $K_0^\bT(D)$ is given by
\[
	K_0^\bT(D) \times K_0^\bT(D) \to K_0^{\bT}(D \otimes D) \to K_0^\bT(D)\ ,
\]
where the first map is the tensor product pairing (see \cite[Prop.~6.1.3]{book:Phillips}) and the second map is induced by an equivariant isomorphism $D \otimes D \to D$ which exists since the action is strongly self-absorbing. Likewise, there is an analogous map
\[
	K_0^\bT(D \otimes \bK) \times K_0^\bT(D \otimes \bK) \to K_0^\bT(D \otimes \bK)
\]
using the tensor product and an equivariant isomorphism $(D \otimes \bK)^{\otimes 2} \to D \otimes \bK$ obtained as a tensor product of two equivariant isomorphisms $D \otimes D \to D$ and $\bK \otimes \bK \to \bK$, where we demand that the latter maps $e \otimes e$ to $e$. A similar construction works for the fixed-point algebras. We obtain a homomorphism  
\[
	K_0((D \otimes \bK)^{\bT}) \times K_0((D \otimes \bK)^{\bT}) \to K_0((D \otimes \bK)^{\bT} \otimes (D \otimes \bK)^{\bT}) \to K_0( (D \otimes \bK)^\bT )
\]
where the first map is again given by the tensor product. The second map is induced by the embedding of $(D \otimes \bK)^{\bT} \otimes (D \otimes \bK)^{\bT}$ into $( D \otimes \bK \otimes D \otimes \bK )^{\bT}$ followed by the equivariant isomorphism as above. 

Now consider the following diagram 
\[
\begin{tikzcd}
	K_0^\bT(D) \times K_0^\bT(D) \ar[d,"\cong" left] \ar[r] & K_0^\bT(D) \ar[d,"\cong"] \\
	K_0^\bT(D \otimes \bK) \times K_0^\bT(D \otimes \bK) \ar[r] & K_0^\bT(D \otimes \bK) \\
	K_0((D \otimes \bK)^\bT) \times K_0((D \otimes \bK)^\bT) \ar[r] \ar[u,"\cong"] & K_0((D \otimes \bK)^\bT) \ar[u,"\cong" right]
\end{tikzcd}
\]
in which the upper vertical arrows are induced by the stabilisation homomorphism. The upper square commutes by the naturality of the tensor product pairing and our choice of isomorphisms. The middle horizontal map is induced by the tensor product pairing for $D \otimes \bK$ in equivariant $K$-theory followed by isomorphisms as described above. Both of the tensor product pairings used in the middle and the lower arrow can be obtained by unitising the algebra, and it is straightforward to check that this implies that the lower square also commutes. In particular, the composition $K_0((D \otimes \bK)^\bT) \to K_0^\bT(D)$ of the homomorphism from Lem.~\ref{lem:saturated} \eqref{it:sat_ii} with the inverse of the stabilisation map is a ring homomorphism.

By stability we have $K_0^\bT(\Endo{V}^{\otimes k}) \cong K_0^\bT(\C) \cong R(\bT) \cong \Z[t,t^{-1}]$. These identifications become isomorphisms of ordered groups if we define the positive cone $R(\bT)_+$ of $R(\bT)$ to be equivalence classes of representations (as opposed to formal differences) and the one of $\Z[t,t^{-1}]$ to be polynomials with non-negative coefficients. The continuity of equivariant $K$-theory implies $K_0^\bT(D) \cong \Z[t,t^{-1},p_V(t)^{-1}]$. Indeed, the map
\[
	K_0^\bT(\Endo{V}^{\otimes k}) \to K_0^\bT(\Endo{V}^{\otimes (k+1)})
\]
corresponds to multiplication with $p_V(t)$ after identifying both sides with $\Z[t,t^{-1}]$. Under the above isomorphism the positive cone $K_0^\bT(D)_+ \subset K_0^\bT(D)$ is mapped onto the semiring $\N_0[t,t^{-1},p_V(t)^{-1}]$ consisting of polynomials $q/p_V^k$ where $q$ has non-negative coefficients (note that the coefficients of $p_V$ are always non-negative). It also follows from the description as a direct limit that $K_0^\bT(D) \cong \Z[t,t^{-1},p_V(t)^{-1}]$ is compatible with the multiplicative structure on both sides and therefore an isomorphism of rings. Moreover, the map $K_0((D \otimes \bK)^\bT) \to K_0^\bT(D)$ defined above identifies the positive cone $K_0^\bT(D)_+$ with
\(
	 \{ [p]_0 \in K_0((D \otimes \bK)^\bT) \ |\ p \in (D \otimes \bK)^\bT \}
\). 

For a semiring $R_+ \subset R$ contained in a commutative unital ring $R$, define 
\[
	GL_1(R_+) = GL_1(R) \cap R_+\ ,
\]
ie.\ those elements in $R_+$ that are invertible when considered as elements of~$R$. 

\begin{lemma} \label{lem:pi0_equiv}
	Let $D = \Endo{V}^{\otimes \infty}$ be the $\bT$-algebra arising from a $\bT$-representation $V$. We have a group isomorphism
	\[
		\pi_0(\eqAut{\bT}{D \otimes \bK}) \cong GL_1(K_0^{\bT}(D)_+) \cong GL_1(\N_0[t,t^{-1},p_V(t)^{-1}]) 
	\]
\end{lemma}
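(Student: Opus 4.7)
The plan is to define a map $\Theta \colon \pi_0(\eqAut{\bT}{D \otimes \bK}) \to GL_1(K_0^\bT(D)_+)$ by $[\alpha] \mapsto [\alpha(1 \otimes e)]$, using Lem.~\ref{lem:saturated} to identify $K_0((D \otimes \bK)^\bT)$ with $K_0^\bT(D)$ as rings, and then to show $\Theta$ is a group isomorphism. The second isomorphism in the statement is obtained separately from the direct-limit description of $K_0^\bT(D)$. Well-definedness on $\pi_0$ is immediate: a path of equivariant automorphisms sends $1 \otimes e$ to a path of equivariant projections, whose endpoints represent the same $K_0$-class.

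To prove multiplicativity I will use the key homotopies $H_l$ and $H_r$ from \eqref{eqn:key_homotopy}: they imply that in $\pi_0$ one has
\[
  \alpha \sim \psi^{-1} \circ (\alpha \otimes \id{}) \circ \psi \qquad \text{and} \qquad \beta \sim \psi^{-1} \circ (\id{} \otimes \beta) \circ \psi ,
\]
so composing these two equivalences yields $\alpha \circ \beta \sim \psi^{-1} \circ (\alpha \otimes \beta) \circ \psi$. The construction of $\beta$ in \eqref{eqn:path_beta} (using Lem.~\ref{lem:path_in_K}(c) and $\gamma(t)(1)=1$) ensures $\psi(1 \otimes e) = (1 \otimes e) \otimes (1 \otimes e)$, so evaluating this homotopy at $1 \otimes e$ and using that $\psi$ is precisely one of the equivariant isomorphisms underlying the ring structure on $K_0^\bT(D)$ gives $\Theta([\alpha \circ \beta]) = \Theta([\alpha]) \cdot \Theta([\beta])$. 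Applying this identity to $\alpha$ and $\alpha^{-1}$ shows $\Theta([\alpha])$ is a unit, and positivity is automatic since $\alpha(1 \otimes e)$ is an honest projection.

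For injectivity, if $\Theta([\alpha]) = 1$ then $\alpha(1 \otimes e)$ and $1 \otimes e$ are Murray--von Neumann equivalent in the stable $C^*$-algebra $(D \otimes \bK)^\bT$, and this equivalence lifts to a strongly continuous path of unitaries in the multiplier algebra interpolating between the two projections. Conjugating $\alpha$ along this path produces a homotopic automorphism landing in the stabiliser $\eqAut{\bT, 1 \otimes e}{D \otimes \bK}$, which is contractible by Thm.~\ref{thm:Aut_eq_stabiliser_contractible}. For surjectivity, given $r \in GL_1(K_0^\bT(D)_+)$ represented by a projection $p$, choose a projection $q$ representing $r^{-1}$; then $p \otimes q$ represents $1$, so $\psi^{-1}(p \otimes q)$ is MvN-equivalent to $1 \otimes e$, and a partial isometry realising this equivalence together with $\psi$ assembles into an equivariant automorphism $\alpha$ with $\Theta([\alpha]) = r$.

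For the second isomorphism, $K_0^\bT(D) \cong \Z[t,t^{-1},p_V(t)^{-1}]$ arises as the direct limit of copies of $R(\bT) = \Z[t,t^{-1}]$ with connecting maps given by multiplication by the character polynomial $p_V$, and since the positive cone at each finite stage is $\N_0[t,t^{-1}]$, the positive cone of the colimit is $\N_0[t,t^{-1},p_V(t)^{-1}]$. The main obstacle will be the surjectivity step: realising an abstract positive unit by a concrete equivariant automorphism requires delicate use of the fullness of any representing projection together with the strong self-absorption of $(D,\sigma)$ from Lem.~\ref{lem:action_ssa}, and Lem.~\ref{lem:units_in_loc} will likely enter to place units in a convenient normal form.
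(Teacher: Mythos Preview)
Your treatment of well-definedness, multiplicativity, and injectivity is essentially the paper's argument (the paper phrases multiplicativity via an auxiliary $\ast$-product and Eckmann--Hilton, but the content is identical to your direct computation using $H_l$ and $H_r$; for injectivity the paper also invokes cancellation in the AF-algebra $(D\otimes\bK)^\bT$ and contractibility of $U(M((D\otimes\bK)^\bT))$ in the strict topology).

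The surjectivity step, however, has a genuine gap. Your assembly ``partial isometry realising $[\psi^{-1}(p\otimes q)] = [1\otimes e]$ together with $\psi$'' does not produce an automorphism hitting~$r$: any unitary $w \in M(((D\otimes\bK)^{\otimes 2})^\bT)$ with $w\,(1\otimes e)^{\otimes 2}\,w^* = p\otimes q$ yields $\psi^{-1}\circ\Ad_w\circ\psi \in \eqAut{\bT}{D\otimes\bK}$, but this sends $1\otimes e$ to $\psi^{-1}(p\otimes q)$, whose class is $r\cdot r^{-1}=1$, not~$r$. To hit $r$ you would need an equivariant automorphism carrying $1\otimes e$ to $p$, and there is no unitary in $M((D\otimes\bK)^\bT)$ doing this when $r\neq 1$; one would instead need an equivariant version of Brown's stable isomorphism theorem identifying $D$ with the $\bT$-algebra $p(D\otimes\bK)p$ and then extending, which is considerably more than a partial isometry provides.

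The paper proceeds exactly along the line you anticipate in your final sentence. Lemma~\ref{lem:units_in_loc} shows that $GL_1(K_0^\bT(D)_+)$ is generated by $t$ and the prime factors $p_1,\dots,p_\ell$ of $p_V$, so it suffices to exhibit automorphisms realising each generator. For $t$ one uses the shift $\Ad_W$ on $\bK$ with $W(\delta_k\otimes\eta)=\delta_{k+1}\otimes\eta$. For each $p_i$ one uses the tensor decomposition $V\cong\bigotimes V_i^{\otimes m_i}$ coming from the factorisation of $p_V$: this gives a $\bT$-equivariant isomorphism $\Endo{V_i}\otimes D\cong D$, and combining it with an equivariant isomorphism $\bK\otimes\Endo{V_i}\cong\bK$ (which exists because $H$ contains every character with infinite multiplicity) yields $\alpha^{p_i}\in\eqAut{\bT}{D\otimes\bK}$ with $[\alpha^{p_i}(1\otimes e)]=p_i(t)$. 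This concrete construction is what your abstract approach is missing.
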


\begin{proof}
	Let $e \in \bK(H_0) \subset \bK$ be a rank $1$-projection. For $\alpha \in \eqAut{\bT}{D \otimes \bK}$ we have $\alpha(1 \otimes e) \in (D \otimes \bK)^{\bT}$. This gives a map
	\begin{equation} \label{eqn:pi0_hom}
		\kappa \colon \pi_0(\eqAut{\bT}{D \otimes \bK}) \to K_0((D\otimes \bK)^{\bT})_+ \cong K_0^{\bT}(D)_+\ ,
	\end{equation}
	where the isomorphism is the one from Lem.~\ref{lem:saturated} \eqref{it:sat_ii}. Apart from the multiplication induced by composition, the group $\pi_0(\eqAut{\bT}{D \otimes \bK})$ carries a second multiplication $\ast$ induced by the tensor product of automorphisms: Let $\beta \colon [0,1] \to \hom_{\bT}(D \otimes \bK, D \otimes \bK \otimes D \otimes \bK)$ be the path from~\eqref{eqn:path_beta}. Let $\psi = \beta(\tfrac{1}{2})$ and define
	\[
		[\alpha_1] \ast [\alpha_2] = [\psi^{-1} \circ (\alpha_1 \otimes \alpha_2) \circ \psi]
	\]
	(compare with \cite[Lem.~2.13]{paper:DadarlatP-DD-theory}).
	
	Let $H_l$ be as in \eqref{eqn:key_homotopy}. The continuous path $H_{\alpha} \colon [0,1] \to \eqAut{\bT}{D \otimes \bK}$ given by $H_{\alpha}(t) = H_l(\alpha,t)$ shows that $[\alpha] \ast [\id{}] = [\alpha]$. Using $H_r$ instead of $H_l$ we see that $[\id{}] \ast [\alpha] = [\alpha]$ as well. Moreover, 
	\[
		([\alpha_1] \ast [\alpha_2]) \circ ([\beta_1] \ast [\beta_2]) = ([\alpha_1] \circ [\beta_1]) \ast ([\alpha_2] \circ [\beta_2])\ .
	\]
	Thus, the Eckmann-Hilton argument shows that $\ast$ and $\circ$ are the same operations on $\pi_0$ and they are both commutative (and associative). In particular, we have $[\alpha] \ast [\alpha^{-1}] = [\id{}]$. Note that 
	\[
		\kappa([\alpha_1] \ast [\alpha_2]) = [\psi^{-1}(\alpha_1(1 \otimes e) \otimes \alpha_2(1 \otimes e))] = [\alpha_1(1 \otimes e)] \cdot [\alpha_1(1 \otimes e)]
	\]
	by the definition of the ring structure on $K_0^\bT(D)$ and in particular 
	\[
		[1 \otimes e] = \kappa([\id{}]) = \kappa([\alpha] \ast [\alpha^{-1}]) = \kappa([\alpha]) \cdot \kappa([\alpha^{-1}])\ .
	\]
	Hence, $\kappa$ factors through a group homomorphism 
	\[
		\pi_0(\eqAut{\bT}{D \otimes \bK}) \to  GL_1(K_0^{\bT}(D)_+)
	\] 
	that we will continue to denote by $\kappa$. 
	
	The ring $K_0^\bT(D) \cong \Z[t,t^{-1},p_V(t)^{-1}]$ is a localisation of the unique factorisation domain $\Z[t]$ at the multiplicative subset $S$ generated by $\{t,p_V(t)\}$. Let $p_i(t) \in \Z[t]$ be the prime factors of the polynomial\footnote{As mentioned at the beginning of this section we may assume that $p_V(t) \in \Z[t]$.} $p_V(t) \in \Z[t]$. By Lem.~\ref{lem:units_in_loc} the group $GL_1(K_0^\bT(D))$ is therefore generated by $\pm 1$, $t$ and all $p_i(t) \in \Z[t]$. Consequently, $GL_1(K_0^\bT(D)_+)$ has the generators $\{t, p_1(t), \dots, p_\ell(t) \}$. To see that $\kappa$ is surjective it suffices to find automorphisms $\alpha^t, \alpha^{p_1}, \dots \alpha^{p_\ell} \in \eqAut{\bT}{D \otimes \bK}$ with the property that $x = [\alpha^x(1 \otimes e)] \in K_0^\bT(D)$. Let $W \colon H \to H$ be the unitary operator given by $W(\delta_k \otimes \eta) = \delta_{k+1} \otimes \eta$ and let $\alpha^t = \id{D} \otimes \Ad_W$. Then $\alpha^t$ is $\bT$-equivariant, and since $WeW^*$ is a projection onto a one-dimensional subspace of $H_1$, we have $[\alpha^t(1 \otimes e)] = t$. To define $\alpha^{p_i}$ note that the prime factor decomposition $\prod_{i=1}^\ell p_i(t)^{m_i}$ of $p_V(t)$ corresponds to a tensor product decomposition of $V$:
	\[
		V \cong \bigotimes_{i = 1}^\ell V_i^{\otimes m_i}\ .
	\]
	This induces an isomorphism for the endomorphism algebra of the form
	\[
		\Endo{V} \cong \bigotimes_{i = 1}^\ell \Endo{V_i}^{\otimes m_i}
	\]
	and shifting the tensor factors gives a $\bT$-equivariant unital $*$-isomorphism
	\[
		\psi_i \colon \Endo{V_i} \otimes D \to D\ . 
	\]
	Since $H$ contains each irreducible representation with infinite multiplicty there exists a unitary intertwiner $U_i \colon  H \otimes V_i  \to H$ that induces by conjugation the $\bT$-equivariant $*$-isomorphism 
	\[
		\phi_i \colon \bK \otimes \Endo{V_i} \to \bK 
	\]
	Define $\alpha^{p_i} \in \eqAut{\bT}{\bK \otimes D} \cong \eqAut{\bT}{D \otimes \bK}$ by $\alpha^{p_i} = (\phi_i \otimes \id{D}) \circ (\id{\bK} \otimes \psi_i^{-1})$ and note that the image of the projection $\alpha_{p_i}(e \otimes 1_D)$ is $\bT$-equivariantly isomorphic to $V_i \otimes D$. Therefore $[\alpha_{p_i}(e \otimes 1_D)] = p_i(t) \in K_0^\bT(D)$.
	
	Let $\alpha \in \eqAut{\bT}{D \otimes \bK}$ with $\kappa([\alpha]) = [1 \otimes e]$. Let $p = \alpha(1 \otimes e) \in (D \otimes \bK)^{\bT}$. The algebra $(D \otimes \bK)^{\bT}$ is an inductive limit of finite-dimensional algebras, which all have the cancellation property. Thus, this is also true for $(D \otimes \bK)^{\bT}$. By \cite[Prop.~3.4]{book:EvansKawahigashi} there exists a unitary $u \in U(M((D \otimes \bK)^{\bT}))$ with $upu^* = 1 \otimes e$. In fact $u$ can be chosen to lie in the unitary group of the unitization of $(D \otimes \bK)^\bT$. Since
	\[
		(D \otimes \bK)^{\bT} \cong (D \otimes \bK(\ell^2\Z))^\bT \otimes \bK(H_0)\ ,
	\]
	the group $U(M((D \otimes \bK)^{\bT}))$ is contractible in the strict topology. In particular there is a path from $1$ to $u$ in $U(M((D \otimes \bK)^{\bT}))$, which is continuous with respect to the strict topology.  Therefore there is also a point-norm continuous path in the $\bT$-equivariant automorphisms between $\alpha$ and $\beta = \Ad_u \circ \alpha$. But $\beta(1 \otimes e) = 1 \otimes e$, so by Thm.~\ref{thm:Aut_eq_stabiliser_contractible} there is a continuous path between $\id{D \otimes \bK}$ and $\beta$. This shows that $\kappa$ is injective.
\end{proof}

\section{The homotopy type of $\eqAutId{\bT}{D \otimes \bK}$} \label{sec:htpy-type}
In this section we will extend some of the classification results Dadarlat and the second author obtained in \cite{paper:DadarlatP-DD-theory} to the $\bT$-equivariant setting. This will allow us to determine the homotopy type of all path-components of $\eqAut{\bT}{D \otimes \bK}$. Let $\eqAutId{\bT}{D \otimes \bK}$ be the path-component of $\id{D \otimes \bK}$ and denote by $\Proj{p_0}{A}$ the path-component of $p_0$ in the space of self-adjoint projections in $A$ equipped with the norm topology.

\begin{prop} \label{prop:princ_bdl}
The evaluation map
\[
	q \colon \eqAutId{\bT}{D \otimes \bK} \to \Proj{1 \otimes e}{(D \otimes \bK)^\bT} \quad , \quad \alpha \mapsto \alpha(1 \otimes e)
\]
gives rise to a principal bundle over $\Proj{1 \otimes e}{(D \otimes \bK)^\bT}$ with structure group $\eqAut{\bT,1 \otimes e}{D \otimes \bK}$. 
\end{prop}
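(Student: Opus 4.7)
The plan is to check the three standard ingredients of a principal bundle — a free right action whose orbits coincide with the fibres of $q$, surjectivity onto the indicated path-component, and local triviality — with the actual work concentrated in the last point.

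First I would set up the right action of $G := \eqAut{\bT,1\otimes e}{D \otimes \bK}$ on $\eqAutId{\bT}{D \otimes \bK}$ by $\alpha \cdot \beta = \alpha \circ \beta$. This is well defined because $G$ is contractible by Thm.~\ref{thm:Aut_eq_stabiliser_contractible}, hence path-connected, so composition with a $G$-element preserves the identity component and the action stays within $\eqAutId{\bT}{D \otimes \bK}$. The map $q$ is $G$-invariant since $\beta(1\otimes e) = 1\otimes e$ for $\beta \in G$, and two elements $\alpha_1, \alpha_2 \in \eqAutId{\bT}{D \otimes \bK}$ with $q(\alpha_1) = q(\alpha_2)$ differ by $\alpha_1^{-1}\circ\alpha_2 \in G$, so fibres and orbits agree. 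Freeness is automatic.

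For surjectivity I would reuse the argument at the end of the proof of Lem.~\ref{lem:pi0_equiv}. Given $p \in \Proj{1 \otimes e}{(D \otimes \bK)^\bT}$, the projections $p$ and $1 \otimes e$ are path-connected in an algebra with cancellation, so by \cite[Prop.~3.4]{book:EvansKawahigashi} there is a unitary $u$ in the unitisation of $(D\otimes\bK)^\bT$ (equivalently, in $U(M((D\otimes\bK)^\bT))$) with $u(1 \otimes e)u^* = p$. Since this unitary group is contractible in the strict topology, $u$ may be joined to $1$ by a strictly continuous path $u_t$, and conjugation yields a point-norm continuous path of $\bT$-equivariant automorphisms $\Ad_{u_t}$ from $\id{D\otimes\bK}$ to $\Ad_u$. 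Hence $\Ad_u \in \eqAutId{\bT}{D \otimes \bK}$ and $q(\Ad_u) = p$.

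The main step — and the main obstacle — is the construction of continuous local sections. Given $p_0 \in \Proj{1 \otimes e}{(D \otimes \bK)^\bT}$ with a chosen preimage $\alpha_0 \in q^{-1}(p_0)$ from the previous step, I would work on the open neighbourhood $U = \{ p : \|p - p_0\| < 1 \}$ and apply the classical recipe
\[
z(p) = p\,p_0 + (1-p)(1-p_0), \qquad v(p) = z(p)\bigl(z(p)^*z(p)\bigr)^{-1/2},
\]
which produces a norm-continuous family of unitaries in the unitisation of $(D \otimes \bK)^\bT$ with $v(p)\,p_0\,v(p)^* = p$ and $v(p_0) = 1$. Since $p$ and $p_0$ are $\bT$-invariant, so is $v(p)$, and after shrinking $U$ so that $\|v(p) - 1\| < 2$ a holomorphic logarithm supplies a norm-continuous path of invariant unitaries from $1$ to $v(p)$; conjugation then places $\Ad_{v(p)}$ in the identity component. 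Setting $s(p) = \Ad_{v(p)} \circ \alpha_0$ produces a continuous local section of $q$ over $U$, and $(p, \beta) \mapsto s(p) \circ \beta$ gives the desired local trivialisation $U \times G \to q^{-1}(U)$. The subtlest point is the continuity of the lift into the identity component, and it is precisely the contractibility of $\eqAut{\bT,1\otimes e}{D\otimes\bK}$ together with Kuiper-type contractibility of the multiplier unitary group that rule out any obstruction.
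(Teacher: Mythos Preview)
Your proposal is correct and follows essentially the same approach as the paper. The paper's proof is terse: it only writes down the local section near $p_0$ via the same polar-decomposition formula $u_p = x_p(x_p^*x_p)^{-1/2}$ with $x_p = p_0 p + (1-p_0)(1-p)$, citing \cite[Lem.~2.8]{paper:DadarlatP-DD-theory}, and leaves the routine checks (free action, fibres equal orbits, surjectivity, translating the section to other basepoints via a chosen $\alpha_0$) implicit. Your version spells these out and is a bit more careful about why $\Ad_{v(p)}$ lands in the identity component. One small remark: your closing sentence overstates what is needed here---the contractibility of $\eqAut{\bT,1\otimes e}{D\otimes\bK}$ plays no role in local triviality (it is used only later, in Prop.~\ref{prop:Aut_Proj_equivalence}, to conclude that $q$ is a weak equivalence); the section exists simply because $v(p)$ is norm-close to $1$.
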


\begin{proof}
	The key issue here is local triviality, which is proven in the same way as \cite[Lem.~2.8]{paper:DadarlatP-DD-theory}. Indeed, $q$ has local sections constructed as follows:	Let  
	\[
		W = \left\{ p \in \Proj{1 \otimes e}{(D \otimes \bK)^\bT}\ |\ \lVert p - p_0 \rVert < 1\right\} \ .
	\] 
	The unitary element $u_p = x_p(x_p^*x_p)^{-\frac{1}{2}}$ with $x_p = pp_0 + (1-p)(1-p_0)$ in $M(D \otimes \bK)$ associated to any $p \in W$ is invariant under the group action, satisfies $\Ad_{u_p}(p_0) = u_pp_0u_p^* = p$ and $p \mapsto \Ad_{u_p}$ is continuous.
\end{proof}

In the following proposition we will consider $U(M( (D \otimes \bK)^\bT ))$ equipped with the norm topology. As a space this topological group is contractible by the main theorem of \cite{chapter:CuntzHigson}. Moreover, $U(D^\bT)$ embeds into it via
\[
	U(D^\bT) \to U(M( (D \otimes \bK)^\bT )) \quad , \quad v \mapsto v \otimes e + (1 - 1\otimes e)\ .
\]

\begin{prop} \label{prop:Proj_BU}
Let $A = (D\otimes \bK)^\bT$ and let 
\[
	U_{1 \otimes e}(M(A)) = \{ u \in U(M(A)) \ |\ u(1 \otimes e) = (1 \otimes e)u \}
\] 
(with the norm topology). The map
\[
	\Ad{} \colon U(M(A)) \to \Proj{1 \otimes e}{A} \quad , \quad u \mapsto u(1 \otimes e)u^*
\]
gives rise to a principal bundle with structure group $U_{1 \otimes e}(M(A))$. Moreover, the inclusion $U(D^\bT) \to U_{1 \otimes e}(M(A))$ is a homotopy equivalence and therefore 
\[
	\Proj{1 \otimes e}{A} \simeq BU(D^\bT)\ .
\]
\end{prop}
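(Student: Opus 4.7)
The proof splits into three parts: establishing the principal bundle structure, identifying the stabiliser $U_{1 \otimes e}(M(A))$ up to homotopy with $U(D^\bT)$, and a classifying space argument.

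For the principal bundle claim, the approach mirrors Prop.~\ref{prop:princ_bdl}. For $p$ in the neighbourhood $W = \{p \in \Proj{1 \otimes e}{A} : \lVert p - 1 \otimes e \rVert < 1\}$ set $x_p = (1 \otimes e) p + (1 - 1 \otimes e)(1 - p)$; this lies in $1 + A \subset M(A)$ with $x_p^*x_p$ positive and invertible, so $u_p = x_p(x_p^*x_p)^{-1/2}$ defines a norm-continuous section $W \to U(M(A))$ with $u_p(1 \otimes e)u_p^* = p$. The map $(p, v) \mapsto u_p v$ is then a homeomorphism $W \times U_{1 \otimes e}(M(A)) \to \Ad^{-1}(W)$, and conjugating by a chosen preimage at any other base point transports this trivialisation elsewhere. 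Surjectivity of $\Ad$ onto $\Proj{1 \otimes e}{A}$ follows by lifting a path from $1 \otimes e$ to $p$ through finitely many such trivialising neighbourhoods.

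For the homotopy equivalence of structure groups, any $u \in U_{1 \otimes e}(M(A))$ commutes with $1 \otimes e$ and hence splits as $u = (1 \otimes e) u (1 \otimes e) + (1 - 1 \otimes e) u (1 - 1 \otimes e)$, giving a topological group isomorphism
\[
U_{1 \otimes e}(M(A)) \cong U\bigl((1 \otimes e) M(A) (1 \otimes e)\bigr) \times U\bigl((1 - 1 \otimes e) M(A) (1 - 1 \otimes e)\bigr).
\]
Since $1 \otimes e \in A$, the first corner equals $(1 \otimes e) A (1 \otimes e)$, and using $e \bK e = \C e$ together with the trivial $\bT$-action on this scalar (as $e$ projects onto a subspace of $H_0$) this corner is canonically isomorphic to $D^\bT$. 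For the second factor, the stability of $A$ is the crucial input: since $\bT$ acts trivially on $H_0$, we have $A \cong B \otimes \bK(H_0)$ with $B = (D \otimes \bK(\ell^2\Z))^\bT$, and $1 \otimes e$ corresponds to $1_B \otimes e_0$ for a rank-one $e_0 \in \bK(H_0)$. Under the identification of $M(A)$ with the adjointable operators $\mathcal{L}_B(H_0 \otimes B)$ on the standard Hilbert $B$-module, the complementary projection projects onto $(1 - e_0) H_0 \otimes B \cong H_0 \otimes B$, so the complementary corner of $M(A)$ is itself isomorphic to $M(A)$. Its norm unitary group is contractible by the main theorem of \cite{chapter:CuntzHigson}, and projection onto the first factor therefore yields a homotopy equivalence $U_{1 \otimes e}(M(A)) \simeq U(D^\bT)$.

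The classifying space conclusion then follows formally: the principal $U_{1 \otimes e}(M(A))$-bundle over $\Proj{1 \otimes e}{A}$ has contractible total space $U(M(A))$, so its base is a model for $BU_{1 \otimes e}(M(A)) \simeq BU(D^\bT)$. The step I expect to require the most care is the corner identification for the multiplier algebra; the Hilbert-module picture of $M(A) = \mathcal{L}_B(H_0 \otimes B)$ makes the isomorphism between the complementary corner and $M(A)$ transparent, but one must verify that the appeal to Cuntz-Higson is valid in the norm topology, which is where the properly infinite structure inherited from stability of $A$ enters.
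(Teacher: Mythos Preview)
Your proof is correct and follows essentially the same route as the paper: the same local section $u_p$ for local triviality, the same splitting $U_{1\otimes e}(M(A)) \cong U(D^\bT) \times U((1-1\otimes e)M(A)(1-1\otimes e))$, and the same appeal to Cuntz--Higson for contractibility of the complementary factor. The only cosmetic difference is that the paper identifies the complementary corner via $p_0^\perp M(A) p_0^\perp = M(p_0^\perp A p_0^\perp)$ (citing \cite{book:AraMathieu}) together with the $\bT$-equivariant isomorphism $\bK(e^\perp H) \cong \bK(H)$, whereas you factor out the $\bT$-action first and use the Hilbert-module picture $M(A) = \mathcal{L}_B(H_0 \otimes B)$; both arrive at the same conclusion.
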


\begin{proof}
	The argument for local triviality is the same as in the proof of Prop.~\ref{prop:princ_bdl} and therefore traces back to \cite[Lem.~2.8]{paper:DadarlatP-DD-theory} as well: Over the set
	\[
		W = \left\{ p \in \Proj{1 \otimes e}{A}\ |\ \lVert p - p_0 \rVert < 1\right\} \ ,
	\] 
	we can define a norm-continuous map to $U(M(A))$ given by $p \mapsto u_p$, with $u_p$ as in the proof of Prop.~\ref{prop:princ_bdl}, that provides a local section of $\Ad{}$. It is also clear from the definition that $U_{1 \otimes e}(M(A))$ is the fibre of $\Ad{}$ over $1 \otimes e$.
	
	It remains to be seen why $U(D^\bT) \to U_{1 \otimes e}(M(A))$ is a homotopy equivalence. Let $p_0 = 1 \otimes e$ and note that 
	\[
		U_{p_0}(M(A)) \cong U(p_0 M(A) p_0) \times U((1 - p_0) M(A) (1 - p_0))\ .
	\]
	However, since $p_0 \in A$ we have $p_0 M(A) p_0 = p_0 A p_0 = (p_0(D \otimes \bK)p_0)^\bT = D^\bT$. Let $p_0^\perp = 1 - p_0 = 1_D \otimes (1-e) = 1_D \otimes e^\perp$. By \cite[Cor.~1.2.37]{book:AraMathieu} the corner of $M(A)$ corresponding to $p_0^\perp$ is
	\[
		p_0^\perp M(A) p_0^\perp = M(p_0^\perp A p_0^\perp)\ .
	\]
	Since $e \in \bK$ projects down to a trivial one-dimensional subrepresentation of~$H$, and $H$ contains each representation with infinite multiplicity, there is a $\bT$-equivariant isomorphism $\bK(e^\perp H) \cong \bK(H)$. With $e^\perp \bK(H) e^\perp = \bK(e^\perp H)$ we obtain
	\[  
	 	p_0^\perp A p_0^\perp = (D \otimes e^\perp \bK e^\perp)^\bT \cong A\ ,
	\] 
	which implies that $U(p_0^\perp M(A) p_0^\perp) \cong U(M(A))$ is contractible. Hence, the inclusion $U(D^\bT) \to U_{p_0}(M(A))$ is a homotopy equivalence with homotopy inverse given by the projection $U_{p_0}(M(A)) \to U(p_0 M(A) p_0) \cong U(D^\bT)$.
	
	To see why the final statement in the proposition holds note that the contractibility of $U(M(A))$ implies that $\Proj{p_0}{A} \simeq BU_{p_0}(M(A))$. But the map $BU(D^\bT) \to BU_{p_0}(M(A))$ induced by the inclusion on classifying spaces is a homotopy equivalence by our previous observations.
\end{proof}

\begin{prop} \label{prop:Aut_Proj_equivalence}
The map $q \colon \eqAutId{\bT}{D \otimes \bK} \to \Proj{1 \otimes e}{(D \otimes \bK)^\bT}$ defined by $q(\alpha) = \alpha(1 \otimes e)$ as in Prop.~\ref{prop:princ_bdl} induces a homotopy equivalence $\eqAutId{\bT}{D \otimes \bK} \simeq \Proj{1 \otimes e}{(D \otimes \bK)^\bT}$.
\end{prop}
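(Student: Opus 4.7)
The plan is to combine Prop.~\ref{prop:princ_bdl}, which exhibits $q$ as a locally trivial principal bundle with structure group $\eqAut{\bT,1 \otimes e}{D \otimes \bK}$, with Thm.~\ref{thm:Aut_eq_stabiliser_contractible}, which shows that this structure group is contractible. A locally trivial principal bundle with contractible fibre ought to project by a weak equivalence onto its base, and under mild point-set conditions this upgrades to an honest homotopy equivalence by Whitehead's theorem.

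More concretely, I would first observe that since the bundle from Prop.~\ref{prop:princ_bdl} is locally trivial over a metrisable base, it is in particular a Hurewicz fibration. The associated long exact sequence of homotopy groups reads
\[
	\cdots \to \pi_n(\eqAut{\bT, 1 \otimes e}{D \otimes \bK}) \to \pi_n(\eqAutId{\bT}{D \otimes \bK}) \xrightarrow{q_*} \pi_n(\Proj{1 \otimes e}{(D \otimes \bK)^\bT}) \to \cdots
\]
for all $n \geq 0$, with the fibre terms vanishing by Thm.~\ref{thm:Aut_eq_stabiliser_contractible}. Hence $q_*$ is an isomorphism on $\pi_n$ in every degree, so $q$ is a weak homotopy equivalence. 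Path-connectedness of $\Proj{1 \otimes e}{(D \otimes \bK)^\bT}$ combined with the local sections $p \mapsto \Ad_{u_p}$ described in the proof of Prop.~\ref{prop:princ_bdl} also gives surjectivity directly.

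To upgrade the weak equivalence to a genuine homotopy equivalence I would invoke Whitehead's theorem, which requires both spaces to have the homotopy type of a CW complex. Both spaces in question are built from the separable $C^*$-algebra $D \otimes \bK$: $\eqAutId{\bT}{D \otimes \bK}$ is the identity component of a Polish topological group of $\bT$-equivariant $*$-automorphisms in the point-norm topology, and $\Proj{1 \otimes e}{(D \otimes \bK)^\bT}$ is a path-component (open in norm topology) of the self-adjoint projections in the separable $C^*$-algebra $(D \otimes \bK)^\bT$. Both are metrisable ANRs by the same standard arguments used in the non-equivariant case in \cite{paper:DadarlatP-DD-theory}, which suffices to give them CW homotopy type.

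The main obstacle in this outline is not the conceptual mechanism but the careful bookkeeping of the ANR / CW-homotopy-type properties of the relevant spaces. An alternative route that avoids these technicalities would be to note that a principal $G$-bundle with contractible structure group $G$ over a paracompact base is classified by the nullhomotopic map to $BG \simeq \ast$ and therefore admits a global section $s \colon \Proj{1 \otimes e}{(D \otimes \bK)^\bT} \to \eqAutId{\bT}{D \otimes \bK}$; one can then construct an explicit fibrewise homotopy from $s \circ q$ to the identity using a contraction of $\eqAut{\bT,1 \otimes e}{D \otimes \bK}$ from Thm.~\ref{thm:Aut_eq_stabiliser_contractible}, exhibiting $s$ as a homotopy inverse to $q$.
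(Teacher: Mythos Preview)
Your argument is essentially the same as the paper's: use Prop.~\ref{prop:princ_bdl} and Thm.~\ref{thm:Aut_eq_stabiliser_contractible} to get a weak equivalence from the long exact sequence, then upgrade via Whitehead. The one substantive difference is how CW homotopy type is secured. You assert that $\eqAutId{\bT}{D \otimes \bK}$ is a metrisable ANR ``by the same standard arguments'' as in the non-equivariant case, which is not quite justified as stated. The paper instead argues only that the base $\Proj{1 \otimes e}{(D \otimes \bK)^\bT}$ is a Banach manifold (hence of CW type) and the fibre is contractible (hence of CW type), and then invokes Sch\"on's theorem \cite[Thm.~2]{paper:Schoen} to conclude that the total space of the fibration also has CW type. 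This is cleaner than checking the ANR property of the automorphism group directly. Your alternative route via a global section of a principal bundle with contractible structure group is valid and would sidestep this point entirely.
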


\begin{proof}
	This is proven in the same way as \cite[Cor.~2.9]{paper:DadarlatP-DD-theory}. We give a summary here: The space $\Proj{1 \otimes e}{(D \otimes \bK)^\bT}$ is a Banach manifold and therefore has the homotopy type of a CW-complex. This implies that the fibre and the base space of the principal bundle with bundle projection $q$ both have the homotopy type of CW-complexes. By \cite[Thm.~2]{paper:Schoen} this then also holds for the space $\eqAutId{\bT}{D \otimes \bK}$ and the long exact sequence of homotopy groups together with Whitehead's theorem proves the result.
\end{proof}

We point out the following technical lemma that will be important in the final section:
\begin{lemma} \label{lem:eqAut_well-pointed}
	The topological group $\eqAutId{\bT}{D \otimes \bK}$ is well-pointed in the sense that the inclusion of the identity into the group is a cofibration, ie.\ the pair $(\eqAutId{\bT}{D \otimes \bK},\id{D \otimes \bK})$ is a neighbourhood deformation retract.
\end{lemma}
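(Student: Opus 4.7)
The plan is to combine the principal bundle structure from Proposition \ref{prop:princ_bdl} with the contractibility of the stabiliser (Theorem \ref{thm:Aut_eq_stabiliser_contractible}) to produce NDR data for the pair $(\eqAutId{\bT}{D \otimes \bK}, \id{D \otimes \bK})$. Recall from Proposition \ref{prop:princ_bdl} that the local section $s(p) = \Ad_{u_p}$ on $W = \{p : \|p - 1 \otimes e\| < 1\}$ satisfies $s(1 \otimes e) = \id{D \otimes \bK}$ and yields a trivialization
\[
	\Phi \colon q^{-1}(W) \xrightarrow{\cong} W \times \eqAut{\bT, 1 \otimes e}{D \otimes \bK}, \quad \alpha \mapsto \bigl(q(\alpha),\, s(q(\alpha))^{-1} \circ \alpha\bigr),
\]
identifying $\id{D \otimes \bK}$ with the product basepoint $(1 \otimes e, \id{D \otimes \bK})$.

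The first step is to construct a strong deformation retract of a neighborhood $U$ of $\id{D \otimes \bK}$ onto $\{\id{D \otimes \bK}\}$. Via $\Phi$ it suffices to deform each factor separately. The fibre factor $\eqAut{\bT, 1 \otimes e}{D \otimes \bK}$ is contractible by Theorem \ref{thm:Aut_eq_stabiliser_contractible}; being a topological group, any null-homotopy can be conjugated by left translations so as to fix $\id{D \otimes \bK}$ for all $t$. For the base factor, after shrinking $W$ to an open subneighborhood $W'$ on which $\|u_p - 1\| < \sqrt{2}$ (possible since $p \mapsto u_p$ is norm-continuous and $u_{1 \otimes e} = 1$), the holomorphic functional calculus furnishes a continuous logarithm $p \mapsto \log u_p$, and the formula
\[
	H_{W'}(p, t) = e^{(1-t)\log u_p}\,(1 \otimes e)\,e^{-(1-t)\log u_p}
\]
gives a strong deformation retract of $W'$ onto $\{1 \otimes e\}$, after a further shrinkage of $W'$ to keep the path inside $W$. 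Transporting the product deformation through $\Phi$ yields a strong deformation retract $H_U$ of $U = q^{-1}(W')$ onto $\{\id{D \otimes \bK}\}$.

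For the Urysohn function and global extension, note that since $D \otimes \bK$ is separable the point-norm topology on $\Aut{D \otimes \bK}$ is metrizable by some metric $d$. A continuous cutoff $\chi \colon [0, \infty) \to [0,1]$ with $\chi(0) = 0$, $\chi > 0$ on $(0,\infty)$, and $\chi \equiv 1$ outside a $d$-ball compactly contained in $U$ provides $u = \chi \circ d(-, \id{D \otimes \bK})$ with $u^{-1}(0) = \{\id{D \otimes \bK}\}$. The standard construction extending $H_U$ by the identity outside $U$ and rescaling the time parameter by $1 - u$ then produces the global NDR homotopy, thereby establishing that $(\eqAutId{\bT}{D \otimes \bK}, \id{D \otimes \bK})$ is an NDR pair. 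The main technical obstacle is confirming that the logarithm path stays inside $W$ and that the two factor deformations combine continuously through $\Phi$; both reduce to routine estimates from holomorphic functional calculus together with the norm-continuity of $p \mapsto u_p$ at $p = 1 \otimes e$.
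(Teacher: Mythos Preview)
Your approach is the same as the paper's, which simply cites the non-equivariant argument \cite[Prop.~2.26]{paper:DadarlatP-DD-theory} and records that the two ingredients it uses---the principal bundle of Prop.~\ref{prop:princ_bdl} with its local trivialisation over a norm-ball around $1\otimes e$, and the contraction of the fibre $\eqAut{\bT,1\otimes e}{D\otimes\bK}$ from Thm.~\ref{thm:Aut_eq_stabiliser_contractible}---are available equivariantly. You have written out the details of that same argument.

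One small slip in your last paragraph: with the reparametrisation $t\mapsto t(1-u)$ you obtain $H(x,1)=H_U\bigl(x,1-u(x)\bigr)$, which lands at $\id{}$ only when $u(x)=0$, so the NDR requirement ``$H(x,1)\in\{\id{}\}$ whenever $u(x)<1$'' is not met. The usual fix uses two functions rather than one: a bump $\psi$ supported in $U$ with $\psi\equiv 1$ on a smaller neighbourhood $V\ni\id{}$ to reparametrise time, so that $H(x,t)=H_U\bigl(x,t\psi(x)\bigr)$ extends continuously by the identity outside $U$ and satisfies $H(x,1)=\id{}$ for all $x\in V$; and separately a Urysohn function $u$ with $u^{-1}(0)=\{\id{}\}$ and $u\equiv 1$ on $X\setminus V$. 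With that adjustment your argument is complete.
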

\begin{proof}
	The corresponding non-equivariant statement is \cite[Prop.~2.26]{paper:DadarlatP-DD-theory}. The proof works verbatim in the equivariant case, since it is based on the following observations: The map $q \colon \eqAutId{\bT}{D \otimes \bK} \to \Proj{1 \otimes e}{(D \otimes \bK)^\bT}$ is a principal bundle with structure group $\eqAut{\bT,1 \otimes e}{D \otimes \bK}$ (by Prop.~\ref{prop:princ_bdl}) that is trivialisable over $W = \{ p \in \Proj{p_0}{(D \otimes \bK)^\bT} \ |\ \lVert p - p_0 \rVert < 1/2 \}$ with $p_0 = 1 \otimes e$. The group $\eqAut{\bT,1 \otimes e}{D \otimes \bK}$ deformation retracts onto its identity element via the contraction described in Thm.~\ref{thm:Aut_eq_stabiliser_contractible}.
\end{proof}

As explained in a paragraph before Lem.~\ref{lem:pi0_equiv} the $\bT$-equivariant $K$-groups of $D$ are $K_0^\bT(D) \cong \Z[t,t^{-1},p_V(t)^{-1}]$ and $K_1^\bT(D) = 0$. The isomorphism with $K_0^\bT(D)$ induces the following order structure on $\Z[t,t^{-1},p_V(t)^{-1}]$: 
\[
	\frac{q(t)}{t^\ell p_V(t)^k} \geq 0 \  \Leftrightarrow \  q(t)p_V(t)^m \text{ has non-negative coefficients for some } m\in \N \ .
\] 
The last lemma suggests that we will also need to compute the $K$-groups of the fixed-point algebra $D^\bT$. For the case $p_V(t) = 1 + t$ this amounts to computing the $K_0$-group of the GICAR algebra (see \cite[Sec.~5]{paper:Bratteli} for the definition, \cite[p.~149]{book:Renault} for its $K_0$-group and \cite[Ex.~IV.3.7]{book:Davidson} for how it is related to our picture of it). Motivated by this result we define the bounded subring of $K_0^\bT(D)$ to be 
\[
	\bddR = \left\{ x \in K_0^\bT(D) \ | \ -m[1_D] \leq x \leq m[1_D] \text{ for some } m \in \N \right\}\ .
\]

\begin{lemma} \label{lem:K_groups_fixed_point_algebra}
Let $D = (\Endo{V})^{\otimes \infty}$ be the infinite UHF-$\bT$-algebra associated to a $\bT$-representation $V$ corresponding to $p_V(t) \in \Rep{\bT} \cong \Z[t,t^{-1}]$. Then the $K$-groups of the fixed-point algebra are given by
\[
	K_0(D^\bT) \cong \bddR \qquad \text{and} \qquad K_1(D^\bT) = 0\ .
\]
\end{lemma}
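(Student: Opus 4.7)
The plan is to realise $D^\bT$ as a unital AF algebra with an explicit Bratteli system and then identify its $K_0$-group as a subring of $K_0^\bT(D)\cong \Z[t,t^{-1},p_V(t)^{-1}]$ via the natural map induced by the stabilisation $D^\bT \hookrightarrow (D \otimes \bK)^\bT$, $d\mapsto d\otimes e$, together with Lem.~\ref{lem:saturated}(ii).

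First I set $B_n = \Endo{V}^{\otimes n}$, so that $D = \varinjlim B_n$ with unital connecting maps $a\mapsto a\otimes 1_{\Endo{V}}$ and hence $D^\bT = \varinjlim B_n^\bT$. The character decomposition $V^{\otimes n} = \bigoplus_k V^{(n)}_k$ identifies $B_n^\bT$ with $\bigoplus_k \Endo{V^{(n)}_k}$, which is finite dimensional, so $D^\bT$ is AF and $K_1(D^\bT)=0$ follows immediately. Sending a rank-one projection in $\Endo{V^{(n)}_k}$ to $t^k$, I identify $K_0(B_n^\bT)$ with the subgroup $\Lambda_n\subseteq \Z[t,t^{-1}]$ of Laurent polynomials supported on $\operatorname{supp}(p_V(t)^n)$, with $[1_{B_n^\bT}]$ corresponding to $p_V(t)^n$. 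Using $V^{(n+1)}_m = \bigoplus_{i+j=m} V^{(n)}_i\otimes V_j$, a short calculation shows that the connecting map $K_0(B_n^\bT)\to K_0(B_{n+1}^\bT)$ induced by $a\mapsto a\otimes 1_{\Endo{V}}$ is multiplication by $p_V(t)$.

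The analogous computation at the stabilised level gives $K_0((B_n\otimes\bK)^\bT) \cong \Z[t,t^{-1}]$ (now without the support restriction, since each character subspace of $V^{\otimes n}\otimes H$ is infinite dimensional), with the same connecting maps. Passing to direct limits on both sides and using Lem.~\ref{lem:saturated}(ii), I obtain $\varinjlim \Z[t,t^{-1}] \cong K_0((D\otimes\bK)^\bT) \cong K_0^\bT(D)$, under which the class of $q\in K_0^\bT(B_n)$ at stage $n$ is represented by $q/p_V^n$. The stabilisation $d\mapsto d\otimes e$ induces a morphism of direct systems which at stage $n$ is just the inclusion $\Lambda_n\hookrightarrow \Z[t,t^{-1}]$, so the resulting map $K_0(D^\bT)\to K_0^\bT(D)$ is injective with image
\[
	\bigcup_n \left\{q(t)/p_V(t)^n \,:\, q\in \Z[t,t^{-1}],\ \operatorname{supp}(q)\subseteq \operatorname{supp}(p_V^n)\right\}.
\]

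Finally I would identify this image with $\bddR$. If $q$ has support in $\operatorname{supp}(p_V^n)$ then $|q_k|\leq m\,(p_V^n)_k$ for some $m\in\N$, so $mp_V^n\pm q$ have non-negative Laurent coefficients and $m\pm q/p_V^n\geq 0$, showing $q/p_V^n\in \bddR$. Conversely, for $x\in \bddR$ with $-m[1_D]\leq x\leq m[1_D]$, the order on $K_0^\bT(D)$ provides representations of $m\pm x$ as $r/(t^\ell p_V^k)$ with $r\in\Z[t]$ having non-negative coefficients; for $n$ sufficiently large $(m\pm x)p_V^n$ is therefore a non-negative Laurent polynomial, forcing $-mp_V^n\leq xp_V^n\leq mp_V^n$ coefficient-wise and hence $\operatorname{supp}(xp_V^n)\subseteq \operatorname{supp}(p_V^n)$. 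Thus $x=(xp_V^n)/p_V^n$ lies in the image. I expect the main subtlety to be the bookkeeping in the previous paragraph, matching the two direct systems and the stabilisation map between them correctly; once that is set up, the identification with $\bddR$ is a direct unpacking of the order on $K_0^\bT(D)$.
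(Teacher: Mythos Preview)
Your proposal is correct and follows essentially the same route as the paper: realise $D^\bT$ as the AF limit of $(\Endo{V}^{\otimes n})^\bT$, identify $K_0$ at each finite stage with the Laurent polynomials supported on $\operatorname{supp}(p_V^n)$ (the paper phrases this equivalently as $\bddR^{(n)}=\{q:-mp_V^n\le q\le mp_V^n\}$), observe that the connecting maps are multiplication by $p_V$, and pass to the limit via $q\mapsto q/p_V^n$. The only cosmetic difference is that you route the embedding $K_0(D^\bT)\hookrightarrow K_0^\bT(D)$ through stabilisation and Lem.~\ref{lem:saturated}(ii), whereas the paper writes down the map $K_0(D^\bT)\to\bddR$ and its inverse directly without invoking that lemma.
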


\begin{proof}
	Since $\bT$ acts on each tensor factor $\Endo{V}$ separately, the fixed-point algebra $D^\bT$ is isomorphic to the direct limit
	\[
		\Endo{V}^\bT \to \dots \to (\Endo{V}^{\otimes n})^\bT \to (\Endo{V}^{\otimes (n+1)})^\bT \to \dots\ ,
	\]
	where the connecting $*$-homomorphisms are given by $T \mapsto T \otimes 1$. Note that $(\Endo{V}^{\otimes n})^\bT \cong \hom_{\bT}(V^{\otimes n}, V^{\otimes n})$, which is isomorphic to a direct sum of matrix algebras. By Schur's lemma this turns out to be
	\[
		 \bigoplus_{k \in \Z} \hom_{\bT}(V_{k}, V_{k}) \cong \bigoplus_{i \in I} M_{n_i}(\C) \otimes \hom_{\bT}(\C_{m_i}, \C_{m_i}) \cong \bigoplus_{i \in I} M_{n_i}(\C)
	\]
	where $V_{m_i} \cong \C^{n_i} \otimes \C_{m_i}$ (compare with the decomposition in \eqref{eqn:decomposition}), so $n_i$ is the multiplicity of the $\C_{m_i}$ in $V$. This implies that $K_0((\Endo{V}^{\otimes n})^\bT)$ is the free abelian group generated by the irreducible subrepresentations of $V^{\otimes n}$. We claim that 
	\begin{align} \label{eqn:bddRn}
		 & K_0((\Endo{V}^{\otimes n})^\bT) \\
		\cong & \left\{ q \in \Z[t,t^{-1}] \ | \ -m\,p_V^n \leq q \leq m\,p_V^n \text{ for some } m \in \N \right\} =: \bddR^{(n)} \notag
	\end{align}
	To see why this is true note first that the polynomials $t^{m_i}$ corresponding to the irreducible subrepresentations of $V^{\otimes n}$ satisfy $-p_V^n \leq t^{m_i} \leq p_V^n$. Therefore $K_0((\Endo{V}^{\otimes n})^\bT)$ is a subgroup of $R_{\text{bdd}}^{(n)}$. Now 	let $q \in \Z[t,t^{-1}]$ be a polynomial so that there exists $m \in \N$ with $-m\,p_V^n \leq q \leq m\,p_V^n$. We may write $q(t) = q_1(t) - q_2(t)$ where $q_i(t)$ are polynomials with non-negative coefficients, which we will denote by $a_k$ for the coefficient of $t^k$ in $q_1$ and $b_k$ for $q_2$. These polynomials may be chosen such that $a_k \neq 0 \Rightarrow b_k = 0$ and $b_k \neq 0 \Rightarrow a_k = 0$. The inequality
	\[
		q_1 - q_2 \leq m\,p_V^n \quad \Leftrightarrow (m\,p_V^n - q_1) + q_2 \geq 0
	\]
	can then only hold if the term in brackets is non-negative, which implies that the representation corresponding to $q_1$ is a subrepresentation of $\bigoplus_{k=1}^m V^{\otimes n}$. Repeating this argument with the other inequality shows that the same must be true for $q_2$. Hence, the polynomials $q_i(t)$ correspond to representations that are elements of $K_0((\Endo{V}^{\otimes n})^\bT)$ proving the claim.

	Identifying $K_0((\Endo{V}^{\otimes n})^\bT)$ with $\bddR^{(n)}$ the connecting homomorphism $\bddR^{(n)} \to \bddR^{(n+1)}$ in the direct limit becomes multiplication by $p_V(t)$. The maps
	\(
		\varphi_n \colon \bddR^{(n)} \to \bddR \ , \  q \mapsto \frac{q}{p_V^n}
	\)
	fit into the following commutative diagram
	\[
	\begin{tikzcd}
			\bddR^{(n)} \ar[dr, "\varphi_n" left] \ar[rr, "q \mapsto q \cdot p_V"] & & \bddR^{(n+1)} \ar[dl, "\varphi_{n+1}"] \\
			& \bddR
	\end{tikzcd}
	\]
	which defines a homomorphism $\varphi \colon K_0(D^\bT) \to \bddR$. To construct an inverse identify $K_0^\bT(D)$ with $\Z[t,t^{-1},p_V(t)^{-1}]$ and 	consider $r \in \bddR$ such that $-m \leq r \leq m$. Then $r$ is of the form 
	\[
		r(t) = \frac{q(t)}{t^\ell p_V(t)^{k}}
	\]
	with $k,\ell \geq 0$ and $q \in \Z[t]$, and we must have 
	\[
		-m\,t^{\ell}\,p_V(t)^{k+m} \leq q(t)p_V(t)^m \leq m\,t^{\ell}\,p_V(t)^{k+m}\ .
	\]
	Let $q(t)p_V(t)^m = \sum_{i \in \N_0} a_i t^i$ and $p_V(t)^k = \sum_{j \in \N_0} b_j t^j$. The above inequality implies $b_i = 0 \Rightarrow a_{i+\ell} =0$. In particular, $a_0 = \dots = a_{\ell-1} = 0$, ie.\ $q(t)p_V(t)^m$ is divisible by~$t^\ell$ and  
	\[
		\frac{q(t)p_V(t)^m}{t^\ell} \in \bddR^{(k+m)}\ .
	\]
	The resulting element in the colimit is independent of the choice of the fraction $\frac{q(t)}{t^\ell p_V(t)^{k}}$ representing $r(t)$. This provides the inverse homomorphism $\bddR \to K_0(D^\bT)$ of $\varphi$, which finishes the computation of $K_0(D^\bT)$, but since $D^\bT$ is an AF-algebra we also have $K_1(D^\bT) = 0$.
\end{proof}

\begin{corollary} \label{cor:bddR_in_Zt}
	The bounded subring $\bddR \subset K_0^\bT(D) \cong \Z[t,t^{-1},p_V(t)^{-1}]$ satifies 
	\[
		\bddR \subset \Z[t,p_V(t)^{-1}]\ .
	\]
\end{corollary}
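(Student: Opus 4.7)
The plan is to read this off the analysis already carried out in the proof of Lemma~\ref{lem:K_groups_fixed_point_algebra}, essentially as a bookkeeping reformulation of a step that appeared there.

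First, I would take an arbitrary $r \in \bddR$ and use the identification $K_0^\bT(D) \cong \Z[t,t^{-1},p_V(t)^{-1}]$ to write $r = q(t)/(t^\ell p_V(t)^k)$ with $q \in \Z[t]$ and $\ell, k \geq 0$. By definition of $\bddR$, there exists $m \in \N$ with $-m \leq r \leq m$ in the order on $K_0^\bT(D)$ recalled at the start of the section (namely $s \geq 0$ iff $s$ admits a representative in $\Z[t,t^{-1},p_V(t)^{-1}]$ with non-negative coefficients in the numerator). Clearing denominators consistently, this inequality translates into the statement that the polynomials $m\,t^\ell p_V(t)^k \pm q(t)$ have non-negative coefficients in $\Z[t]$.

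The key step is then the coefficient comparison used in the proof of Lemma~\ref{lem:K_groups_fixed_point_algebra}. Write $q(t) = \sum_i a_i t^i$ and $p_V(t)^k = \sum_j b_j t^j$. The two inequalities $|a_i| \leq m\,b_{i-\ell}$ (with the convention $b_{j} = 0$ for $j < 0$) force the support of $q$ to lie in the support of $t^\ell p_V(t)^k$. By the normalisation at the beginning of Sec.~\ref{sec:path-comp} we have $V_0 \neq 0$ and $V_j = 0$ for $j < 0$, so $p_V(t) \in \Z[t]$ has strictly positive constant term; consequently $p_V(t)^k$ also has positive constant term and no negative-degree terms, so $t^\ell p_V(t)^k$ is supported in degrees $\geq \ell$. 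Hence $a_0 = \cdots = a_{\ell-1} = 0$, i.e.\ $q(t)$ is divisible by $t^\ell$ in $\Z[t]$.

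With this divisibility secured, the final step is immediate: setting $\tilde q(t) = q(t)/t^\ell \in \Z[t]$, we obtain
\[
	r = \frac{\tilde q(t)}{p_V(t)^k} \in \Z[t,p_V(t)^{-1}],
\]
which is the desired containment. The only non-trivial step is the support comparison, and since it was essentially already executed inside the proof of Lemma~\ref{lem:K_groups_fixed_point_algebra}, I do not expect any real obstacle — the corollary is best thought of as isolating the conclusion that the factor $t$ in the denominator of the localisation is redundant once one restricts to the bounded subring.
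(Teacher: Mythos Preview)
Your proposal is correct and follows essentially the same approach as the paper: both write $r = q(t)/(t^\ell p_V(t)^k)$, use the boundedness $-m \leq r \leq m$ to obtain the coefficient inequalities $|a_i| \leq m\,b_{i-\ell}$, and conclude that $a_0 = \dots = a_{\ell-1} = 0$ so that $t^\ell \mid q(t)$. The paper's proof simply points back to this step in Lemma~\ref{lem:K_groups_fixed_point_algebra}, which is exactly what you unpack.
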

\begin{proof}
	This is a consequence of the observation in the proof of Lemma~\ref{lem:K_groups_fixed_point_algebra} that any $r \in \bddR \subset \Z[t,t^{-1},p_V(t)^{-1}]$ given by a quotient  
	\[
		r(t) = \frac{q(t)}{t^\ell p_V(t)^{k}}
	\]
	with $k,\ell \geq 0$ and $q \in \Z[t]$ must have $q(t)$ divisible by $t^\ell$.
\end{proof}

Prop.~\ref{prop:princ_bdl} and Prop.~\ref{prop:Proj_BU} reduce the computation of the homotopy groups of $\eqAutId{\bT}{D \otimes \bK}$ to the ones of $U(D^\bT)$. This is essentially a computation of non-stable $K$-theory in the sense of \cite{paper:ThomsenNonstable}. Interestingly, the higher homotopy groups will in general only be $2$-periodic from degree $3$ onwards. More precisely, we always have $\pi_1(U(D^\bT)) \cong K_0(D^\bT)$, but, depending on the polynomial $p_V(t)$, the groups $\pi_{2k-1}(U(D^\bT))$ for $k > 1$ will be all isomorphic to the same subgroup of $K_0(D^\bT)$. Let
\begin{align*}
	\bddR^0 &= \{r \in \bddR \ |\ r(0) = 0 \} \ ,\\
	\bddR^\infty &= \left\{\frac{q}{p_V^k} \in \bddR \ |\ q \in \Z[t],\ k \geq 0,\ \deg(q) < kd \right\}\ .
\end{align*}

\begin{theorem} \label{thm:coefficients}
	Let $D = \Endo{V}^{\otimes \infty}$ be the infinite UHF-algebra associated to a $\bT$-representation $V$ and equipped with its natural $\bT$-action. Let 
	\[
		p_V(t) = \sum_{i=0}^d a_i t^i \in \Z[t] \subset  \Z[t,t^{-1}] \cong \Rep{\bT}
	\]
	be the associated polynomial of degree $d$. Then $\pi_{2k}(U(D^\bT)) = 0$ for $k \in \N_0$ and
	\begin{equation} \label{eqn:pi1}
		\pi_1(U(D^\bT)) \cong K_0(D^\bT) \cong \bddR \ .
	\end{equation}
	The odd homotopy groups $\pi_{2k-1}(U(D^\bT))$ for $k > 1$ depend on $a_0$ and $a_d$ in the following way:
	\begin{enumerate}[a)]
		\item If $a_0 > 1$ and $a_d > 1$, then $\pi_{2k-1}(U(D^\bT)) \cong \bddR$.
		\item If $a_0 = 1$ and $a_d > 1$, then $\pi_{2k-1}(U(D^\bT)) \cong \bddR^0$.
		\item If $a_0 > 1$ and $a_d = 1$, then $\pi_{2k-1}(U(D^\bT)) \cong \bddR^\infty$. 
		\item If $a_0 = 1$ and $a_d = 1$, then $\pi_{2k-1}(U(D^\bT)) \cong \bddR^\infty \cap \bddR^0$.
	\end{enumerate}
\end{theorem}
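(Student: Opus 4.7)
The plan is to exploit the AF-structure $D^\bT = \varinjlim_n A_n$ with $A_n = (\Endo{V}^{\otimes n})^\bT \cong \bigoplus_m M_{b_m^{(n)}}(\C)$ from the proof of Lem.~\ref{lem:K_groups_fixed_point_algebra}, where $b_m^{(n)}$ is the coefficient of $t^m$ in $p_V(t)^n$. By continuity of $\pi_*$ for unitary groups of AF-algebras,
\[
	\pi_j(U(D^\bT)) \;\cong\; \varinjlim_n\, \bigoplus_m \pi_j\bigl(U(b_m^{(n)})\bigr),
\]
and the AF connecting map embeds $M_{b_m^{(n)}}$ into $M_{b_{m'}^{(n+1)}}$ with multiplicity $a_{m'-m}$, matching multiplication by $p_V$ on $K_0$.

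For the vanishing of $\pi_{2k}(U(D^\bT))$ with $k \geq 1$, at every position $m$ either $b_m^{(n)} = 1$ persists (the endpoints when $a_0 = 1$ or $a_d = 1$), giving $\pi_{2k}(U(1)) = 0$, or else $b_m^{(n)} \to \infty$, so once $b_m^{(n)} \geq k+1$ the stable range forces $\pi_{2k} = 0$. Either way elements die at some finite stage; the case $k=0$ is immediate since $U(A)$ is path-connected for AF $A$. For the odd case with $k \geq 2$, I use the standard unstable unitary fact that $\pi_{2k-1}(U(c)) \cong \Z$ for $c \geq k$, while $\pi_{2k-1}(U(c))$ is a torsion group for $1 \leq c < k$ (induction on $c$ via the fibration $U(c-1) \to U(c) \to S^{2c-1}$ and the torsion of $\pi_i(S^{2c-1})$ for $i > 2c-1$). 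Hence the stabilization $\pi_{2k-1}(U(c)) \to \pi_{2k-1}(U) \cong \Z$ is an isomorphism for $c \geq k$ and zero otherwise. Assembling block-diagonal stabilizations yields a natural map
\[
	s_n \colon \pi_{2k-1}(U(A_n)) \longrightarrow K_0(A_n)
\]
with image $K_0^{(k)}(A_n) := \bigoplus_{m : b_m^{(n)} \geq k} \Z$ and torsion kernel. Since each block inclusion $M_c \hookrightarrow M_{\mu c}$ induces multiplication by $\mu$ on both stable $\pi_{2k-1}$ and $K_0$, the maps $s_n$ are compatible with the connecting maps; moreover torsion dies in the colimit (any torsion element eventually enters a stable-range summand, where the map into the torsion-free $\Z$ is zero). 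This yields an injection $\pi_{2k-1}(U(D^\bT)) \hookrightarrow K_0(D^\bT) = \bddR$ whose image is $\varinjlim_n K_0^{(k)}(A_n)$ viewed inside $\bddR$ via $\varphi_n(q) = q/p_V^n$.

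To identify this image, by Cor.~\ref{cor:bddR_in_Zt} any $r \in \bddR$ has the form $r = q/p_V^n$ with $q \in \Z[t]$, hence also $r = (q\cdot p_V^{n'-n})/p_V^{n'}$ at higher levels. Such $r$ lies in $K_0^{(k)}(A_{n'})$ iff the support of $q \cdot p_V^{n'-n}$ is contained in $\{m : b_m^{(n')} \geq k\}$. Using the polynomial-coefficient estimates in the Appendix, for $n'$ large every interior position $m \in [1, n'd - 1]$ in $\mathrm{supp}(p_V^{n'})$ already satisfies $b_m^{(n')} \geq k$, so only the endpoints $m = 0$ and $m = n'd$ can fail; and they fail precisely when $a_0 = 1$, resp.\ $a_d = 1$, because $b_0^{(n')} = a_0^{n'}$ and $b_{n'd}^{(n')} = a_d^{n'}$. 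Moreover $q\cdot p_V^{n'-n}$ has a nonzero constant term iff $q(0) \neq 0$, and a nonzero coefficient at $t^{n'd}$ iff $\deg(q) = nd$. Combining: case (a) imposes no constraint (image $= \bddR$); case (b) requires $q(0) = 0$, i.e.\ $r \in \bddR^0$; case (c) requires $\deg(q) < nd$, i.e.\ $r \in \bddR^\infty$; case (d) demands both. The statement \eqref{eqn:pi1} for $\pi_1$ follows from the same scheme with $k = 1$: every $\pi_1(U(c)) = \Z$ is already stable, so no constraint arises and the image is all of $\bddR$.

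The main technical obstacle is the combinatorial input ensuring that for any fixed $k$, eventually $b_m^{(n)} \geq k$ for every interior $m$ in $\mathrm{supp}(p_V^n)$; this is precisely what the Appendix's lemmas on coefficients of $p_V^n$ supply. A secondary point is the compatibility between $s_n$ and the AF connecting maps, which rests on the elementary observation that a block-diagonal inclusion $M_c \hookrightarrow M_{\mu c}$ acts as multiplication by $\mu$ on stable $\pi_{2k-1}$.
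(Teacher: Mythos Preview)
Your proof is correct and matches the paper's approach: express $D^\bT$ as the AF-limit of $\bigoplus_m M_{b_m^{(n)}}(\C)$, compute $\pi_j(U(-))$ as a colimit, use the stable range for unitary groups together with the Appendix's coefficient-growth lemmas, and read off the endpoint constraints from whether $a_0$ or $a_d$ equals~$1$. The only tactical difference is that the paper first replaces $V$ by $V^{\otimes N}$ so that $p_V$ has large middle coefficients (Lem.~\ref{lem:large_middle}), which puts every interior summand immediately into the stable range for $k>m/2$ and thereby sidesteps your separate torsion-dies-in-the-colimit argument for the unstable $\pi_{2k-1}(U(c))$ with $c<k$.
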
 

\begin{proof}
	The isomorphism \eqref{eqn:pi1} was shown in \cite[Thm.~4.6]{paper:HandelmanK0AF}. For the computation of the higher homotopy groups we note that by \cite[Prop.~4.4]{paper:HandelmanK0AF} the group $\pi_m(U(D^\bT))$ is the direct limit of the sequence
	\[
		\pi_m(U(A_{(k)}^\bT)) \to \pi_m(U(A_{(k+1)}^\bT))
	\]
	where $A_{(k)} = \Endo{V}^{\otimes k} \cong \Endo{V^{\otimes k}}$ and $A_{(k)}^\bT$ is the fixed-point algebra. The coefficients $a_i$ of $p_V$ can be interpreted as multiplicities as follows: We have $a_i = \dim(V_i)$ with $V_i$ as in the decomposition \eqref{eqn:decomposition}. Thus, if 
	\[
		p_V(t)^k = \sum_{\ell = 0}^{kd} b_{\ell} t^\ell\ ,
	\]
	then by Schur's lemma 
	\[
		A_{(k)}^\bT \cong \bigoplus_{\ell = 0}^{kd} \Endo{W_{\ell}}
	\]
	where $W_{\ell} \subset V^{\otimes k}$ is the subrepresentation corresponding to the character $z \mapsto z^\ell$ and satisfies $\dim(W_{\ell}) = b_{\ell}$. Hence,
	\[
		U(A_{(k)}^\bT) \cong \prod_{\ell=0}^{kd} U(b_{\ell})\ .
	\]
	The polynomial $p_V(t)$ has non-negative integer coefficients. By Lem.~\ref{lem:large_middle} there is an $N \in \N$ such that $p_V(t)^N$ has large middle coefficients in the sense of Def.~\ref{def:large_middle}. Since the sequence
	\[
		A_{(0)}^\bT \to A_{(N)}^\bT \to \dots \to A_{(mN)}^\bT \to A_{((m+1)N)}^\bT \to \dots 
	\]
	with connecting homomorphisms given by $T \mapsto T \otimes 1$ also has direct limit $D^\bT$ we may work with $V^{\otimes N}$ instead of $V$ and assume without loss of generality that $p_V$ has large middle coefficients. 
	
	By Lem.~\ref{lem:coefficients_grow} the coefficients $b_\ell$ of $p_V(t)^k$ then satisfy $b_\ell = 0$ or $b_\ell > k$ for $\ell \in \{1, \dots, kd-1\}$ and $k \in \N$. If $a_0 > 1$, then we also have $b_0 > k$ for the lowest order coefficient. If $a_d > 1$, then $b_{kd} > k$ for the highest order coefficient as well. This implies that for $k > \frac{m}{2}$ the unitary groups $U(b_{\ell})$ will all have dimension $b_{\ell} > \frac{m}{2}$ except potentially the ones corresponding to the lowest and the highest order coefficients of $p_V$, which might be $1$. Let $U_\infty$ be the colimit over the inclusions $U(n) \to U(n+1)$ that add a $1$ in the lower right hand corner. In the above situation $m$ falls into the stable range for the homotopy groups, which implies that $\pi_m(U(b_{\ell})) \cong \pi_m(U_\infty)$ for $\ell \in \{1, \dots, kd-1\}$, which is isomorphic to $\Z$ if $m$ is odd and vanishes if $m$ is even.
	
	Let $\bddR^{(k)} \cong K_0(A_{(k)}^\bT)$ be as in \eqref{eqn:bddRn}. If $m$ is odd and $k > \frac{m}{2}$, then the first homomorphism in the following composition
	\[
	\begin{tikzcd}
		\pi_m(U(A_{(k)}^\bT)) \ar[r] & K_1(S^m A_{(k)}^\bT) \ar[r,"\cong"] & \widetilde{K}_0(S^{m+1}A_{(k)}^\bT) \ar[r,"\cong"] & \bddR^{(k)}
	\end{tikzcd}
	\]
	is injective. The last isomorphism in this chain uses Bott periodicity and the identification with $\bddR^{(k)}$. The image of this map depends on the lowest and highest order coefficients in $p_V$. If $a_0 = 1$, then $\pi_m(U(b_0))=\pi_m(U(1))$ vanishes for $m > 1$ and therefore the image will contain only polynomials with vanishing constant term. Likewise, if $a_d = 1$, then the polynomials in the image must have vanishing highest order term, or in other words their degree must be strictly lower than $kd$. If $a_0 > 1$ and $a_d > 1$, then the above map is an isomorphism. Since the homomorphism $\bddR^{(k)} \to \bddR$ is given by division by $p_V^k$ the result follows after taking the direct limit. Note that for the direct limit we can neglect all terms in the sequence where $k \leq \frac{m}{2}$.
\end{proof}

\begin{remark} \label{rem:Z-absorbtion}
	The case distinction in Thm.~\ref{thm:coefficients} is closely linked to the question whether $D^\bT$ tensorially absorbs the Jiang-Su algebra $\JiangSu$. The $k$th~level of the Bratteli diagram for the AF-algebra $D^\bT$ has one vertex for each non-zero coefficient in $p_V(t)^k$. Using the same notation for the coefficients as in the proof of the previous theorem we have an edge of multiplicity $a_i$ in the diagram between the vertex corresponding to $b_i t^i$ in $p_V(t)^k$ and the one for $b_i a_j t^{i+j}$ in $p_V(t)^k \cdot p_V(t)$. If $a_0 = 1$, then the subset of all vertices corresponding to the coefficients $b_it^i$ with $i \neq 0$ provides a saturated hereditary subset that corresponds to a character $\chi \colon D^\bT \to \C$ by the classification of ideals in AF-algebras given in \cite[Thm.~3.3]{paper:Bratteli}. A similar argument works in the case where $a_d = 1$ using the complement of the vertices corresponding to highest order coefficients. Thus, if $a_0 = 1$ or $a_d = 1$, then $D^\bT \ncong D^\bT \otimes \JiangSu$. The special case $p_V(t) = 1+t$ (where $D^\bT$ is the GICAR algebra) has also been discussed in \cite[Ex.~6.4]{paper:GaborII}.

	In case $a_0 > 1$ and $a_d > 1$ the fixed-point algebra $B := \Endo{V^{\otimes k}}^\bT$ for sufficiently large $k>0$ is a completely non-commutative finite-dimensional $C^*$-algebra in the sense of \cite[Def.~1.1]{paper:BlackadarKumjianRordam} by Lem.~\ref{lem:large_middle} and \ref{lem:coefficients_grow} in the appendix and we have $D \cong \Endo{V^{\otimes k}}^{\otimes \infty}$. Each of the $*$-homomorphisms 
	\[
		\varphi_i \colon B \to D \qquad , \qquad a \mapsto 1 \otimes \dots \otimes 1 \otimes a \otimes 1 \otimes \dots
	\] 
	factors through the fixed-point algebra $D^\bT$ and embeds $B$ into $D^\bT$ in such a way that the image eventually commutes up to $\epsilon > 0$ with any finite set of elements in $D^\bT$. This shows that $D^\bT$ is approximately divisible in the sense of \cite[Def.~1.2]{paper:BlackadarKumjianRordam}. Hence, $D^\bT \cong D^\bT \otimes \JiangSu$ by \cite[Thm.~2.3]{paper:TomsWinterASH} if $a_0 > 1$ and $a_d > 1$. It would be interesting to see whether there exists a $\bT$-equivariant $*$-isomorphism $D \cong D \otimes \JiangSu$ in this case (with $\bT$ acting trivially on $\JiangSu$). 
	
\end{remark}

\section{Classification of $\bT$-equivariant $D \otimes \bK$-bundles} \label{sec:classification}
In this final section we will classify isomorphism classes of locally trivial bundles of $C^*$-algebras with fibre $D\otimes \bK$ that carry the $\bT$-action in each fibre discussed in the previous sections (ie.\ they allow local trivialisations such that the transition functions are equivariant with respect to that action). The invariant associated to such a bundle will be an element in a generalised cohomology theory. It arises from an infinite loop space structure on the classifying space $B\!\eqAut{\bT}{D \otimes \bK}$, and its coefficients are given by the homotopy groups determined in the previous section. We start by recalling a few basic facts from stable homotopy theory \cite{book:Adams-InfLoopSp}.
\begin{definition} \label{def:inf_loop_space}
	A sequence of pointed topological spaces $(Y_k)_{k \in \N_0}$ together with weak homotopy equivalences 
	\[
		\sigma_k \colon Y_k \to \Omega Y_{k+1} 
	\] 
	(where $\Omega Z$ denotes the based loop space of the pointed space $Z$) is called an \emph{$\Omega$-spectrum}. If $Y_0$ is the zeroth space of an $\Omega$-spectrum it is called an \emph{infinite loop space}.
\end{definition}

Each $\Omega$-spectrum $(Y_k)_{k \in \N_0}$ gives rise to a reduced cohomology theory on the category of pointed finite CW-complexes in the following way
\[
	\widetilde{h}^k(X) = [X, Y_k]_+
\]
where the right hand side denotes based homotopy classes of base point preserving continuous maps and we define $Y_{n} = \Omega^{-n} Y_0$ for $n < 0$. Note that this indeed gives abelian groups, since $\widetilde{h}^k(X) \cong [X, \Omega^2 Y_{k+2}]_+$ and homotopy classes of maps into double loop spaces carry a natural abelian group structure. The coefficients $\check{h}^*$ of the theory are defined to be $\check{h}^k = \widetilde{h}^k(S^0)$. Using the loop-suspension adjunction we see that $\check{h}^k = \pi_{-k}(Y_0)$, so the coefficients are fixed by the homotopy groups of the infinite loop space underlying $\widetilde{h}^*$.

Note that $X \mapsto \widetilde{h}^*(X)$ also gives rise to an unreduced cohomology theory on pairs of spaces $(X,A)$: Let $CA = A \times I/A \times \{0\}$ be the cone on $A$, denote by $X_+$ the pointed space obtained from $X$ by adding a disjoint base point and define 
\[
	h^k(X,A) = \widetilde{h}^k(X_+ \cup C(A_+))
\]  
where the point $[a,1] \in C(A_+)$ is identified with $a \in X_+$  in $X_+ \cup C(A_+)$. By definition
\[
	h^k(X) = h^k(X,\emptyset) \cong \widetilde{h}^k(X_+) = [X_+, Y_k]_+ \cong [X, Y_k]\ ,
\] 
where the right hand side denotes the unbased homotopy classes. Moreover, if $A \subset X$ is the inclusion of a subcomplex, then $h^k(X,A) \cong \widetilde{h}^k(X/A)$.

There are several ``infinite loop space machines'' in algebraic topology that produce $\Omega$-spectra and therefore cohomology theories. The one that is most suited to our setting relies on diagram spaces for a diagram category of finite sets and injections. More precisely, let $\cI$ be the category with objects given by $\mathbf{n} = \{1,\dots,n\}$ for $n \in \N_0$ (the objects include $\mathbf{0} = \emptyset$) and morphisms given by injective maps between the sets. An $\cI$-space is a (covariant) functor from $\cI$ to the category of topological spaces. 

There is a symmetric monoidal structure, denoted by $\sqcup$, on $\cI$ defined as follows: For $\mathbf{m}, \mathbf{n} \in \obj{\cI}$ let $\mathbf{m} \sqcup \mathbf{n} = \{1,\dots,m+n\}$ on objects. Let $f \in \hom(\mathbf{m}, \mathbf{m}'), g \in \hom(\mathbf{n},\mathbf{n}')$. The  morphism $f \sqcup g \colon \mathbf{m} \sqcup \mathbf{n} \to \mathbf{m}' \sqcup \mathbf{n}'$ acts by identifying $\mathbf{m}$ with the first $m$ elements of $\{1, \dots, m+n\}$ (and similarly for $\mathbf{m}'$) and $\mathbf{n}$ with the last $n$ elements (similarly for $\mathbf{n}'$). The symmetry $\tau_{m,n}\colon \mathbf{m} \sqcup \mathbf{n} \to \mathbf{n} \sqcup \mathbf{m}$ is defined by an $(m,n)$-block permutation. 

The symmetric monoidal structure on $\cI$ together with the cartesian product on topological spaces turns the category of $\cI$-spaces into a symmetric monoidal category. A monoid in this category is called an $\cI$-monoid. Unravelling the definitions, an $\cI$-monoid is an $\cI$-space $X$ together with a natural transformation
\[
	\mu_{m,n} \colon X(\mathbf{m}) \times X(\mathbf{n}) \to X(\mathbf{m} \sqcup \mathbf{n})\ . 
\]
that makes the obvious associativity and unitality diagrams commute. The $\cI$-monoid is called commutative if the following diagram commutes
\[
\begin{tikzcd}
	X(\mathbf{m}) \times X(\mathbf{n}) \ar[r,"\mu_{m,n}"] \ar[d] & X(\mathbf{m} \sqcup \mathbf{n}) \ar[d,"X(\tau_{m,n})"] \\
	X(\mathbf{n}) \times X(\mathbf{m}) \ar[r,"\mu_{n,m}" below]& X(\mathbf{n} \sqcup \mathbf{m}) 
\end{tikzcd}
\]
where the vertical arrow on the left is interchanging the two factors. As described in detail in \cite[Sec.~5.2]{paper:Schlichtkrull} any commutative $\cI$-monoid $X$ gives rise to a $\Gamma$-space $\Gamma(X)$ in the sense of \cite{paper:SegalCatCoh}. A $\Gamma$-space is a diagram in topological spaces indexed by the category $\Gamma^{\rm op}$ of pointed finite sets (more precisely, a skeleton thereof). It can be seen as a homotopy theoretic generalisation of a commutative monoid. The information contained in the diagram allows to define infinite deloopings. Let $X$ be a commutative $\cI$-monoid that is grouplike in the sense that the monoid obtained as the homotopy colimit
\[
	X_{h\cI} = \hocolim_{\cI} X
\]
is a group (see \cite{paper:Vogt-hocolim} for a reference). It was shown in \cite[Prop.~5.3]{paper:Schlichtkrull} that in this case $X_{h\cI}$ is an infinite loop space. More precisely, it is the zeroth space of the $\Omega$-spectrum obtained from the $\Gamma$-space associated to $X$. In short, any grouplike commutative $\cI$-monoid $X$ gives rise to a cohomology theory $h^*$ with $\check{h}^k \cong \pi_{-k}(X_{h\cI})$.

\subsection{The commutative $\cI$-monoid $G_D^\bT$}
Let $V$ be a finite-dimensional $\bT$-representation as in \eqref{eqn:decomposition}. Let $D = \Endo{V}^{\otimes \infty}$ be the associated infinite UHF-algebra with its canonical $\bT$-action. Let $H = \ell^2(\Z) \otimes H_0$ be the $\bT$-representation that contains each irreducible representation with infinite multiplicity and define $\bK = \bK(H)$ to be the compact operators on $H$. Let 
\begin{equation} \label{eqn:GD-I-monoid}
	G_D^\bT(\mathbf{n}) = \eqAut{\bT}{(D \otimes \bK)^{\otimes n}}
\end{equation}
Any morphism $f \colon \mathbf{m} \to \mathbf{n}$ in $\cI$ can be decomposed as $f = \sigma \circ \iota$, where $\iota \colon \mathbf{m} \to \mathbf{n}$ is the inclusion that identifies $\{1, \dots, m\}$ with the first $m$ elements of $\mathbf{n}$ (note that we have $m \leq n$ otherwise the morphism set is empty) and $\sigma \colon \mathbf{n} \to \mathbf{n}$ is a permutation. Define $\hat{\sigma} \colon (D \otimes \bK)^{\otimes n} \to (D \otimes \bK)^{\otimes n}$ to be the $\bT$-equivariant $*$-automorphism that permutes the tensor factors according to $\sigma$ and let $G_D^\bT(\sigma) \colon G_D^\bT(\mathbf{n}) \to G_D^\bT(\mathbf{n})$ be given by conjugation by $\hat{\sigma}$. Let
\[
	G_D^\bT(\iota) \colon G_D^\bT(\mathbf{m}) \to G_D^\bT(\mathbf{n}) \quad, \quad \alpha \mapsto \alpha \otimes \id{(D \otimes \bK)^{n-m}}\ .
\]
A straightforward computation shows that $G_D^\bT(\sigma) \circ G_D^\bT(\iota) = G_D^\bT(\sigma') \circ G_D^\bT(\iota)$ if $\sigma \circ \iota = \sigma' \circ \iota$. Therefore $G_D^\bT(f) = G_D^\bT(\sigma) \circ G_D^\bT(\iota)$ is well-defined and this definition extends $G_D^\bT$ to a functor $\cI \to \text{Grp}$, where $\text{Grp}$ is the category of topological groups. The multiplication
\[
	\mu_{m,n} \colon G_D^\bT(\mathbf{m}) \times G_D^\bT(\mathbf{n}) \to G_D^\bT(\mathbf{m} \sqcup \mathbf{n})
\]
defined by $\mu_{m,n}(\alpha, \beta) = \alpha \otimes \beta$ is associative and turns $G_D^\bT \colon \cI \to \text{Grp}$ into an $\cI$-monoid taking values in topological groups. Let $\tau_{m,n}$ be the symmetry of $\cI$ and note that $G_D^\bT(\tau_{m,n})(\alpha \otimes \beta) = \beta \otimes \alpha$. Hence, $G_D^\bT$ is a commutative $\cI$-monoid. In a similar way we define
\[
	G_{D,0}^\bT(\mathbf{n}) = \eqAutId{\bT}{(D \otimes \bK)^{\otimes n}}\ .
\]
The same arguments as above also apply to $G_{D,0}^\bT$ proving that it also is a commutative $\cI$-monoid. In fact, both of these diagram spaces have more structure that will prove to be crucial in Cor.~\ref{cor:coh_theory}.

\begin{lemma} \label{lem:stable_EHI-group}
	The commutative $\cI$-monoids $G_D^\bT$ and $G_{D,0}^\bT$ are both stable EH-$\cI$-groups with compatible inverses in the sense of \cite[Def.~3.1]{paper:DadarlatP-UnitSpectra}.
\end{lemma}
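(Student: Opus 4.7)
The plan is to verify the structural axioms making up the notion of a stable EH-$\cI$-group with compatible inverses, following closely the pattern of the non-equivariant version established in \cite{paper:DadarlatP-UnitSpectra}. The commutative $\cI$-monoid structure on $G_D^\bT$ and $G_{D,0}^\bT$ already takes values in topological groups, and the structure morphisms $G_D^\bT(f)$ induced by morphisms $f$ of $\cI$ — conjugation by a permutation automorphism and the right tensor stabilisation $\alpha \mapsto \alpha \otimes \id{}$ — are continuous group homomorphisms. Compatibility with inverses is immediate from $(\alpha \otimes \beta)^{-1} = \alpha^{-1} \otimes \beta^{-1}$ together with the fact that inversion commutes with conjugation and with tensoring by the identity, so inversion defines a natural transformation of $\cI$-monoids.

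The Eckmann--Hilton condition relates two a priori different operations on each $G_D^\bT(\mathbf{n})$: composition of automorphisms and the pairing induced by $\mu_{m,n}$ after choosing a $\bT$-equivariant isomorphism collapsing an extra tensor factor to identify $(D \otimes \bK)^{\otimes 2n}$ with $(D \otimes \bK)^{\otimes n}$. This is precisely what the homotopies $H_l$ and $H_r$ of \eqref{eqn:key_homotopy} deliver: they provide continuous homotopies between $\id{}$ and $\alpha \mapsto \psi^{-1} \circ (\alpha \otimes \id{}) \circ \psi$, respectively $\id{} \otimes \alpha$ in place of $\alpha \otimes \id{}$. Composing these with the functorial action of $G_D^\bT$ on the inclusion $\mathbf{n} \hookrightarrow \mathbf{2n}$ and a permutation gives the desired homotopy witnessing that group multiplication agrees up to coherent homotopy with the $\cI$-monoidal multiplication. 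Iterating the construction handles higher levels.

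Stability requires the unit inclusions to be cofibrations, i.e.\ that each $G_D^\bT(\mathbf{n})$ be well-pointed at the identity. For $G_{D,0}^\bT(\mathbf{n}) = \eqAutId{\bT}{(D \otimes \bK)^{\otimes n}}$ this follows from Lem.~\ref{lem:eqAut_well-pointed} applied to the algebra $(D \otimes \bK)^{\otimes n}$: this algebra is again $\bT$-equivariantly isomorphic to $D' \otimes \bK'$ with $D' = \Endo{V^{\otimes n}}^{\otimes \infty}$ arising from the $\bT$-representation $V^{\otimes n}$ and $\bK'$ the compact operators on a Hilbert space still containing every irreducible representation with infinite multiplicity. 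The proof of Lem.~\ref{lem:eqAut_well-pointed} then goes through verbatim. Since the full group $G_D^\bT(\mathbf{n})$ is locally near the identity homeomorphic to its identity component, it is also well-pointed.

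The main obstacle I anticipate is the naturality of the EH-homotopies across all of $\cI$: one must arrange the paths $\beta$ defining $\psi$ coherently so that the homotopies at level $\mathbf{m} \sqcup \mathbf{n}$ match those at level $\mathbf{m}$ and $\mathbf{n}$ separately under the structure maps. The approach is to fix the path $\gamma$ in \eqref{eqn:path_gamma} and the path $\delta$ from Lem.~\ref{lem:path_in_K} once at the base level $\mathbf{1}$, and then propagate to higher levels by tensoring with identity morphisms together with an appropriate block permutation. Functoriality of this construction — ensured by the associativity of $\otimes$ and the compatibility of the collapse isomorphisms $(D \otimes \bK)^{\otimes 2} \to D \otimes \bK$ chosen via Lem.~\ref{lem:action_ssa} — then guarantees the required coherence diagrams commute, completing the verification.
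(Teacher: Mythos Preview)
Your proposal misidentifies what several of the axioms in \cite[Def.~3.1]{paper:DadarlatP-UnitSpectra} actually require, and as a result the genuine work in the proof is either misplaced or missing.

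First, the Eckmann--Hilton condition is the \emph{strict} interchange law
\[
	(\alpha_1 \otimes \alpha_2) \circ (\beta_1 \otimes \beta_2) = (\alpha_1 \circ \beta_1) \otimes (\alpha_2 \circ \beta_2)\ ,
\]
which holds on the nose for automorphisms and needs no homotopy at all; there is no coherence problem to worry about, and your ``main obstacle'' is a non-issue. Second, ``stable'' does not mean well-pointed: it means that the structure maps $\alpha \mapsto \alpha \otimes \id{}$ are homotopy equivalences. This is exactly where the homotopy $H_l$ from \eqref{eqn:key_homotopy} is used (conjugation by $\beta(\tfrac{1}{2})$ provides the homotopy inverse), so you have the right tool but applied it to the wrong axiom. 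Well-pointedness is a separate hypothesis, and your argument for it via Lem.~\ref{lem:eqAut_well-pointed} is fine. Third, and most importantly, ``compatible inverses'' is \emph{not} the trivial identity $(\alpha \otimes \beta)^{-1} = \alpha^{-1} \otimes \beta^{-1}$. It is the requirement that for every $\alpha \in G_D^\bT(\mathbf{m})$ the two stabilisations $\alpha \otimes \id{}$ and $\id{} \otimes \alpha$ lie in the same path component of $G_D^\bT(\mathbf{m} \sqcup \mathbf{m})$. The paper checks this by computing both images under the isomorphism $\pi_0 \cong GL_1(K_0^\bT(D)_+)$ from Lem.~\ref{lem:pi0_equiv}: choosing an equivariant isomorphism $\psi \colon (D \otimes \bK)^{\otimes 2m} \to (D \otimes \bK)^{\otimes m}$ with $\psi(p_{2m}) = p_m$, both classes become $[p_m]\cdot[\alpha(p_m)] = [\alpha(p_m)]$ under the ring multiplication. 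This $K$-theoretic step is the actual content of the ``compatible inverses'' verification, and it is absent from your proposal.
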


\begin{proof}
	We will only prove the statement for $G_D^\bT$ here, since the only difference to $G_{D,0}^\bT$ is that $\pi_0(G_{D,0}^\bT(\mathbf{n}))$ is trivial, which makes the proof easier.  
	
	Note that $\eqAut{\bT}{D \otimes \bK}$ is a well-pointed topological group by Lem.~\ref{lem:eqAut_well-pointed}. The identity
	\(
		(\alpha_1 \otimes \alpha_2) \circ (\beta_1 \otimes \beta_2) = (\alpha_1 \circ \beta_1) \otimes (\alpha_2 \circ \beta_2)
	\) 
	shows that the diagram in \cite[Def.~3.1]{paper:DadarlatP-UnitSpectra} commutes. Let $p_k = (1 \otimes e)^{\otimes k} \in (D \otimes \bK)^{\otimes k}$. To see that $G_D^\bT$ has compatible inverses note that 
	\[
		\pi_0(G^\bT_D(\mathbf{m} \sqcup \mathbf{m})) \cong \pi_0(\eqAut{\bT}{(D \otimes \bK)^{\otimes m}}) \cong GL_1(K_0^\bT(D^{\otimes m})_+)  
	\]
	similar to Lem.~\ref{lem:pi0_equiv}. The second isomorphism is given by $[\alpha] \mapsto [\alpha(p_m)]$ and the first isomorphism is defined by conjugation with a $\bT$-equivariant isomorphism 
	\[
		\psi \colon (D \otimes \bK)^{\otimes 2m} \to (D \otimes \bK)^{\otimes m}\ ,
	\]
	which may be chosen in such a way that $\psi( p_{2m} ) = p_m$. The $K$-theory classes corresponding to $(\iota_m \sqcup \id{\mathbf{m}})_*(\alpha) = \id{} \otimes \alpha$ and $(\id{\mathbf{m}} \sqcup \iota_m)_*(\alpha) = \alpha \otimes \id{}$ are $[\psi( p_m \otimes \alpha(p_m) )]$ and $[\psi( \alpha(p_m) \otimes p_m )]$ respectively. But
	\[
		[\psi( p_m \otimes \alpha(p_m) )] = [p_m] \cdot [\alpha(p_m)] = [\alpha(p_m)] \in K_0(((D \otimes \bK)^{\otimes m})^\bT)
	\]
	by the definition of the ring structure on the $K$-theory group, which has $p_m$ as its unit. Since we obtain the same result for $[\psi( \alpha(p_m) \otimes p_m )]$, $G_D^\bT$ indeed has compatible inverses.
	
	It remains to be seen why $G_D^\bT$ is stable. To show this it suffices to prove that $\eqAut{\bT}{D \otimes \bK} \to \eqAut{\bT}{(D \otimes \bK)^{\otimes 2}}$ given by $\alpha \mapsto \alpha \otimes \id{}$ is a homotopy equivalence. But the map $H_l$ from \eqref{eqn:key_homotopy} is a homotopy between the two maps $\alpha \mapsto \beta(\tfrac{1}{2})^{-1} \circ (\alpha \otimes \id{}) \circ \beta(\tfrac{1}{2})$ and the identity. Since conjugation by $\beta(\tfrac{1}{2})$ is a homeomorphism, this shows that $\alpha \mapsto \alpha \otimes \id{}$ is a homotopy equivalence.
\end{proof}

\begin{corollary} \label{cor:coh_theory}
	The topological group $\eqAut{\bT}{D \otimes \bK}$ is an infinite loop space with associated cohomology theory $E_{D,\bT}^*(X)$ and 
	\[
		E_{D,\bT}^0(X) = [X, \eqAut{\bT}{D \otimes \bK}] \quad \text{and} \quad E_{D,\bT}^1(X) = [X, B\!\eqAut{\bT}{D \otimes \bK}]\ .
	\]
	The coefficients are given by 
	\[
		\check{E}^k_{D,\bT} \cong \begin{cases}
			0 & \text{if } k > 0 \ , \\
			GL_1(K_0^\bT(D)_+) & \text{if } k = 0\ , \\
			\pi_{-k-1}(U(D^\bT)) & \text{if } k < 0\ .
		\end{cases}
	\]
	In particular, isomorphism classes of $\bT$-equivariant locally trivial $C^*$-algebra bundles with fibres isomorphic to the $\bT$-algebra $D \otimes \bK$ over the finite CW-complex $X$ with trivial $\bT$-action form a group with respect to the fibrewise tensor product that is isomorphic to $E_{D,\bT}^1(X)$.
\end{corollary}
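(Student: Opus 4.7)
The plan is to apply the infinite loop space machinery for commutative $\cI$-monoids recalled at the beginning of the section to the diagram space $G^\bT_D$ introduced in~\eqref{eqn:GD-I-monoid}. By Lem.~\ref{lem:stable_EHI-group}, $G^\bT_D$ is a stable EH-$\cI$-group with compatible inverses taking values in well-pointed topological groups. Invoking the machinery developed in \cite{paper:DadarlatP-UnitSpectra} (in particular, the construction that turns such an $\cI$-monoid into a $\Gamma$-space via Schlichtkrull's procedure and then into an $\Omega$-spectrum), the homotopy colimit $(G^\bT_D)_{h\cI}$ becomes the zeroth space of an $\Omega$-spectrum. Stability of $G^\bT_D$, as established in the proof of Lem.~\ref{lem:stable_EHI-group}, asserts that each structure map $\alpha \mapsto \alpha \otimes \id{}$ between consecutive values of the diagram is a homotopy equivalence, so the canonical map $\eqAut{\bT}{D \otimes \bK} = G^\bT_D(\mathbf{1}) \to (G^\bT_D)_{h\cI}$ is a homotopy equivalence. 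Consequently $\eqAut{\bT}{D \otimes \bK}$ is itself an infinite loop space and the associated cohomology theory $E^*_{D,\bT}$ satisfies $E^0_{D,\bT}(X) = [X,\eqAut{\bT}{D\otimes\bK}]$ by construction.

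The identification $E^1_{D,\bT}(X) \cong [X, B\!\eqAut{\bT}{D\otimes\bK}]$ is the step I expect to be the main obstacle, since it requires comparing two a priori distinct deloopings of $\eqAut{\bT}{D\otimes\bK}$: the tensor-product delooping $B_\otimes\!\eqAut{\bT}{D\otimes\bK}$ arising from the $\Gamma$-space, and the classifying-space delooping $B\!\eqAut{\bT}{D\otimes\bK}$ coming from the topological group structure. The strategy, which mirrors the non-equivariant argument in \cite{paper:DadarlatP-UnitSpectra}, is to exploit the fact that $G^\bT_D$ is an EH-$\cI$-group with compatible inverses: the Eckmann-Hilton style comparison already seen in the proof of Lem.~\ref{lem:pi0_equiv} extends to all higher homotopy and shows that the two monoid structures on the bar construction agree up to coherent homotopy. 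The well-pointedness ensured by Lem.~\ref{lem:eqAut_well-pointed} is precisely the technical hypothesis that lets the bar construction for the topological group model one of these deloopings; the other is modelled by the first space of the $\Omega$-spectrum. Feeding these into the comparison theorem of \cite{paper:DadarlatP-UnitSpectra} then produces the desired weak equivalence.

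Computing the coefficients is now a matter of assembling previous results. By construction, $\check{E}^k_{D,\bT} = \pi_{-k}(\eqAut{\bT}{D\otimes\bK})$. In degree $k=0$ this is $\pi_0(\eqAut{\bT}{D\otimes\bK}) \cong GL_1(K^\bT_0(D)_+)$ by Lem.~\ref{lem:pi0_equiv}. For $k<0$, the homotopy groups of the identity component are obtained by combining the homotopy equivalence $\eqAutId{\bT}{D\otimes\bK} \simeq \Proj{1\otimes e}{(D\otimes\bK)^\bT}$ from Prop.~\ref{prop:Aut_Proj_equivalence} with the identification $\Proj{1\otimes e}{(D\otimes\bK)^\bT} \simeq BU(D^\bT)$ from Prop.~\ref{prop:Proj_BU}, yielding $\pi_n(\eqAutId{\bT}{D\otimes\bK}) \cong \pi_{n-1}(U(D^\bT))$ for $n \geq 1$, and these groups were determined in Thm.~\ref{thm:coefficients}. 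The vanishing of $\check{E}^k_{D,\bT}$ for $k>0$ is a consequence of $\eqAut{\bT}{D \otimes \bK}$ being the zeroth space of a connective $\Omega$-spectrum, together with the fact that this zeroth space has the correct path components.

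Finally, for the bundle classification statement, the group of structural automorphisms of the fibre $D \otimes \bK$ compatible with the $\bT$-action is precisely $\eqAut{\bT}{D\otimes\bK}$, so every $\bT$-equivariant locally trivial bundle with this fibre over a space $X$ with trivial $\bT$-action has an associated principal $\eqAut{\bT}{D\otimes\bK}$-bundle, and this assignment is a bijection of isomorphism classes. Standard classifying space theory (applicable since $\eqAut{\bT}{D\otimes\bK}$ is well-pointed by Lem.~\ref{lem:eqAut_well-pointed} and $X$ is a finite CW-complex) then identifies these isomorphism classes with $[X, B\!\eqAut{\bT}{D\otimes\bK}]$. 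The fibrewise tensor product of two such bundles corresponds, under this bijection, to the $H$-space multiplication on $B\!\eqAut{\bT}{D\otimes\bK}$ coming from the infinite loop space structure, so it endows the set of isomorphism classes with a group structure isomorphic to $E^1_{D,\bT}(X)$.
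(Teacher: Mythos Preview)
Your proposal is correct and follows essentially the same approach as the paper: apply Lem.~\ref{lem:stable_EHI-group} to feed $G^\bT_D$ into the Schlichtkrull/$\Gamma$-space machine, use stability to identify $\eqAut{\bT}{D\otimes\bK}$ with the homotopy colimit, invoke the comparison theorem of \cite{paper:DadarlatP-UnitSpectra} for the two deloopings, and read off the coefficients from Lem.~\ref{lem:pi0_equiv}, Prop.~\ref{prop:Aut_Proj_equivalence} and Prop.~\ref{prop:Proj_BU}. Two small remarks: first, the paper explicitly notes (and you should too) that the grouplike condition for the $\cI$-monoid is what makes the $\Gamma$-space very special, and this is supplied by Lem.~\ref{lem:pi0_equiv}; second, the comparison $B_\otimes \simeq B$ uses only the Eckmann--Hilton interchange, not the \emph{compatible inverses} property---the paper points this out in a remark following the corollary, so your attribution of that step to compatible inverses is a slight overstatement.
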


\begin{proof}
By Lem.~\ref{lem:stable_EHI-group} the diagram space $G_D^\bT$ is a commutative $\cI$-monoid. Stability implies that 
\(
	\eqAut{\bT}{D \otimes \bK} = G_D^\bT(\mathbf{1}) \to (G_D^\bT)_{h\cI} 
\)
is a homotopy equivalence \cite[Lem.~3.5]{paper:DadarlatP-UnitSpectra} . In particular, $\pi_0((G_D^\bT)_{h\cI})$ is a group (see also Lem.~\ref{lem:pi0_equiv}). Thus, $\eqAut{\bT}{D \otimes \bK}$ is an infinite loop space with respect to the operation induced by the tensor product. In particular, we have a delooping $B_\otimes\!\eqAut{\bT}{D \otimes \bK}$ as part of the $\Omega$-spectrum associated to $\eqAut{\bT}{D \otimes \bK}$. 

Let $B\!\eqAut{\bT}{D \otimes \bK}$ be the classifying space of $\eqAut{\bT}{D \otimes \bK}$ as a topological group (ie.\ using the composition instead of the tensor product). It was shown in \cite[Thm.~3.6]{paper:DadarlatP-UnitSpectra} that those two deloopings are homotopy equivalent as a result of the Eckmann-Hilton condition satisfied by $G_D^\bT$. This implies that the associated cohomology theory satisfies 
\[
		E_{D,\bT}^0(X) = [X, \eqAut{\bT}{D \otimes \bK}] \quad \text{and} \quad E_{D,\bT}^1(X) = [X, B\!\eqAut{\bT}{D \otimes \bK}]\ .
\]
The coefficients of $X \mapsto E_{D,\bT}^*(X)$ are isomorphic to the homotopy groups of $(G^\bT_D)_{h\cI} \simeq \eqAut{\bT}{D \otimes \bK}$. Hence,
\[
	\check{E}_{D,\bT}^k \cong \pi_{-k}(\eqAut{\bT}{D \otimes \bK})
\]
and $\check{E}_{D,\bT}^0$ is computed in Lem.~\ref{lem:pi0_equiv}, while for $k > 0$
\[
\check{E}_{D,\bT}^k \cong \pi_{-k}(\eqAut{\bT}{D \otimes \bK}) \cong \pi_{-k}(BU(D^\bT)) \cong \pi_{-k-1}(U(D^\bT))
\]
by Prop.~\ref{prop:Aut_Proj_equivalence}, Prop.~\ref{prop:Proj_BU} and the fact that $\Omega BU(D^\bT) \simeq U(D^\bT)$.
\end{proof}

\begin{remark}
	The proof of Cor.~\ref{cor:coh_theory} is easily adapted to $\eqAutId{\bT}{D \otimes \bK}$ showing that it is an infinite loop space as well. It gives rise to the cohomology theory $\bar{E}^*_{D,\bT}(X)$ associated to $G_{D,0}^\bT$ with 
	\[
		\bar{E}^0_{D,\bT}(X) = [X, \eqAutId{\bT}{D \otimes \bK}] \quad \text{and} \quad \bar{E}^1_{D,\bT}(X) = [X, B\!\eqAutId{\bT}{D \otimes \bK}]
	\]
	and coefficients
	\[
		\check{\bar{E}}^k_{D,\bT} \cong 
		\begin{cases}
			0 & \text{if } k \geq 0 \ ,\\
			\pi_{-k-1}(U(D^\bT)) & \text{if } k < 0\ .
		\end{cases}
	\]
\end{remark}

\begin{remark}
The reader familiar with \cite{paper:DadarlatP-UnitSpectra} will notice that Cor.~\ref{cor:coh_theory} did not use the fact that $G_D^\bT$ has compatible inverses. This property allowed the comparison with the unit spectrum of $K$-theory in \cite{paper:DadarlatP-UnitSpectra}. The question how this comparison generalises to $E^*_{D,\bT}(X)$ in the equivariant case will be central in future work.   
\end{remark}

Given a finite CW-complex $X$ some insight into $E^*_{D,\bT}(X)$ can be gained from the Atiyah-Hirzebruch spectral sequence with $E^2$-page:
\[
	E_2^{p,q} = H^p(X, \check{E}_{D,\bT}^q) \quad \Rightarrow \quad E^{p+q}_{D,\bT}(X)\ .
\]
A sketch of what the $E_2$-page looks like can be found in Fig.~\ref{fig:AHSS}. For degree reasons the $E_2$- and $E_3$-pages are the same and the first potentially non-trivial differential occurs on the $E_3$-page. 

\begin{figure}[htp]
\centering
\begin{tikzpicture}
  \matrix (m) [matrix of math nodes, nodes in empty cells,nodes={minimum width=1cm, minimum height=5ex,outer sep=-5pt}, column sep=1ex,row sep=1ex]{
                &   0  &  1  &  2 &  3 & \\
          0  &  H^0(X,G)  &  H^1(X,G)  & H^2(X,G) & H^3(X,G) \\
         -1\quad     &  0  & 0 &  0  & 0   \\
         -2\quad     &  H^0(X,\bddR)  & H^1(X,\bddR) &  H^2(X,\bddR)  & H^3(X,\bddR)\\
         -3\quad     &  0  & 0 &  0  & 0  \\
         -4\quad     &  H^0(X,\bddR^{0,\infty})  & H^1(X,\bddR^{0,\infty}) &  H^2(X,\bddR^{0,\infty})  & H^3(X,\bddR^{0,\infty}) \\
         -5\quad     &  0  & 0 &  0  & 0 \\ \\};
  \draw[-stealth,dashed] (m-4-2.south east) -- (m-6-5.north west) node[midway,above] {$d^3$};
  \draw[thick] (m-1-1.east) -- (m-8-1.east) ;
  \draw[thick] (m-1-1.south) -- (m-1-6.south) ;
\end{tikzpicture}
\caption{\label{fig:AHSS}Atiyah-Hirzebruch spectral sequence for $E_{D,\bT}^*(X)$. Here, $G=GL_1(K_0^\bT(D)_+)$. The coeffficients in degree $-2k$ for $k > 1$ depend on the chosen representation and are either $\bddR$, $\bddR^0$, $\bddR^\infty$ or $\bddR^0 \cap \bddR^\infty$ (see Lem.~\ref{thm:coefficients}).}	
\end{figure}

The homomorphism $\eqAut{\bT}{D \otimes \bK} \to GL_1(K_0^\bT(D))$ given by mapping $\alpha$ to its path-component can be used to associate to any $\bT$-equivariant locally trivial $D \otimes \bK$-bundle $\mathcal{A} \to X$ a bundle with fibre $K_0^\bT(D)$ and structure group $GL_1(K_0^\bT(D))$. The same result is obtained by applying the functor $K_0^\bT$ fibrewise. This gives the edge homomorphism of the spectral sequence on $E^1_{D,\bT}(X)$ and takes the form 
\[
	E^1_{D,\bT}(X) \to H^1(X, GL_1(K_0^\bT(D)))\ .
\] 


\begin{corollary} \label{cor:tori}
	Let $V$ be a $\bT$-representation with corresponding character polynomial $p_V = \sum_{i=0}^d a_i t^i$, let $D$ be the associated infinite UHF-algebra, let $n \in \N$ and let $\bT^n$ be the $n$-dimensional torus. Then we have 
	\begin{align*}
		E^1_{D,\bT}(\bT^n) & \cong H^1(\bT^n, G) \ \oplus\ H^3(\bT^n, \bddR) \ \oplus\ \bigoplus_{k=2}^\infty H^{2k+1}(\bT^n, \bddR^{0,\infty})\\ 
		& \cong \Z^n \oplus\! \bigoplus_{q \mid p_V \atop q \textrm{ prime}} \Z^n \ \oplus\ \extp^3 (\bddR)^n \ \oplus\ \bigoplus_{k=2}^\infty \extp^{2k+1} (\bddR^{0,\infty})^n	\ ,
	\end{align*}
	where the exterior algebras are taken over $\bddR$ and $\bddR^{0,\infty}$, respectively, and $\bddR^{0,\infty}$ is (with notation as in Thm.~\ref{thm:coefficients})
	\[
		\bddR^{0,\infty} = \begin{cases}
			\bddR & \text{if } a_0 > 1  \text{ and } a_d > 1\ , \\
			\bddR^0 & \text{if } a_0 = 1  \text{ and } a_d > 1\ , \\
			\bddR^{\infty} & \text{if } a_0 > 1  \text{ and } a_d = 1\ , \\
			\bddR^\infty \cap \bddR^0 & \text{if } a_0 = 1  \text{ and } a_d = 1\ .
		\end{cases}
	\]
\end{corollary}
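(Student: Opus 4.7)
The plan is to deduce Cor.~\ref{cor:tori} from the Atiyah-Hirzebruch spectral sequence for $E^*_{D,\bT}$ made available by Cor.~\ref{cor:coh_theory}, which has $E_2^{p,q} = H^p(\bT^n, \check{E}^q_{D,\bT})$ and converges to $E^{p+q}_{D,\bT}(\bT^n)$. The crucial point is that this spectral sequence collapses at its $E_2$-page when $X = \bT^n$. Rather than computing the potentially non-trivial odd differentials $d_{2r+1}$ directly, I would invoke the standard stable splitting
\[
\Sigma^\infty \bT^n_+ \simeq \bigvee_{k=0}^n (\mathbb{S}^k)^{\vee \binom{n}{k}},
\]
obtained by iterating $\Sigma^\infty S^1_+ \simeq \mathbb{S} \vee \mathbb{S}^1$, the identity $(X \times Y)_+ \cong X_+ \wedge Y_+$, and distributivity of the smash product over wedges. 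Since $E^*_{D,\bT}$ is represented by an honest $\Omega$-spectrum, this splitting forces
\[
E^m_{D,\bT}(\bT^n) \cong \bigoplus_{k=0}^n \binom{n}{k}\, \check{E}^{m-k}_{D,\bT}
\]
for every integer $m$, which is equivalent to collapse at $E_2$.

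Setting $m = 1$ and reading off coefficients via Cor.~\ref{cor:coh_theory} and Thm.~\ref{thm:coefficients}, I would observe that the summands with $k = 0$ and with even $k \geq 2$ vanish: the former because $\check{E}^q_{D,\bT} = 0$ for $q > 0$, the latter because $\pi_{2j-1}(\eqAut{\bT}{D \otimes \bK}) = 0$ for all $j \geq 1$. The surviving contributions are the summand $k = 1$, giving $n$ copies of $G := GL_1(K_0^\bT(D)_+)$; the summand $k = 3$, giving $\binom{n}{3}$ copies of $\bddR$; and the summands $k = 2j+1$ for $j \geq 2$, each giving $\binom{n}{2j+1}$ copies of $\bddR^{0,\infty}$. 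Identifying $\binom{n}{k} A$ with $\extp^k A^n$ via the standard cup-product presentation of $H^*(\bT^n, A)$ as an exterior algebra on $n$ generators in degree one yields the second line of the stated isomorphism.

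To obtain the first line I would expand $G$. By Lem.~\ref{lem:pi0_equiv} and the argument of Lem.~\ref{lem:units_in_loc}, $G \cong GL_1(\N_0[t, t^{-1}, p_V(t)^{-1}])$ is freely generated as an abelian group by $t$ together with the distinct prime factors of $p_V(t)$ in $\Z[t]$, so $G \cong \Z \oplus \bigoplus_{q \mid p_V,\ q \text{ prime}} \Z$; taking $n$-fold direct sums produces the first summand in the claim. The main obstacle in this argument is the collapse statement: a priori the odd differentials $d_3, d_5, \ldots$ could couple the nonzero even-indexed rows of Fig.~\ref{fig:AHSS}, and ruling this out is exactly what the stable splitting accomplishes. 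Once collapse is in hand, the remainder of the proof is bookkeeping, with the case distinction on $(a_0, a_d)$ absorbed into the definition of $\bddR^{0,\infty}$.
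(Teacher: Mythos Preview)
Your argument is correct and takes a genuinely different route to collapse of the Atiyah--Hirzebruch spectral sequence than the paper does. The paper invokes a theorem of Arlettaz stating that the images of the differentials in the Atiyah--Hirzebruch spectral sequence are always torsion; since the coefficient groups $G$, $\bddR$ and $\bddR^{0,\infty}$ are all torsion-free abelian groups (being subgroups of $\Z[t,t^{-1},p_V(t)^{-1}]$), all differentials vanish and the spectral sequence collapses at $E_2$. The identification of $G$ and of $H^k(\bT^n,R)$ with $\extp^k R^n$ via K\"unneth is then the same as yours.

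Your stable splitting $\Sigma^\infty \bT^n_+ \simeq \bigvee_k (\mathbb{S}^k)^{\vee \binom{n}{k}}$ has the advantage of giving the direct-sum decomposition of $E^1_{D,\bT}(\bT^n)$ immediately, without any separate argument that the filtration extensions on the $E_\infty$-page split; the paper's Arlettaz-based argument yields collapse but leaves this (standard, for tori) step implicit. Conversely, the paper's approach would transport verbatim to any finite CW-complex $X$ with torsion-free integral cohomology, whereas your argument is specific to spaces that split stably as wedges of spheres. Both routes lead to the same endpoint, and your bookkeeping of the summands and your description of $G$ via Lem.~\ref{lem:units_in_loc} match the paper's.
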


\begin{proof}
	By \cite[Thm.~2.7]{paper:Arlettaz} the image of the differentials in the Atiyah-Hirze\-bruch spectral sequence are torsion subgroups. By Lem.~\ref{lem:units_in_loc} a unit in the ring $\Z[t,t^{-1},p_V(t)^{-1}]$ is of the form $\pm t^{k_0}q_1(t)^{k_1}\cdots q_r(t)^{k_r}$, where the polynomials $q_i$ are the distinct prime factors of $p_V$ and $k_0, k_1, \dots, k_r \in \Z$. This implies that 
	\[
		G = GL_1(K_0^\bT(D)_+) \cong \Z \oplus \!\bigoplus_{q \mid p_V \atop q \text{ prime}} \Z 
	\]
	with the first summand corresponding to $k_0$. In particular, $G$ is torsion free. Since the rings $\bddR$ and $\bddR^{0,\infty}$ are torsion free as well, the spectral sequence collapses on the $E_2$-page. The above observation also shows
	\[
		H^1(\bT^n,G) \cong H^1(\bT^n, \Z) \oplus \!\bigoplus_{q \mid p_V \atop q \text{ prime}} H^1(\bT^n,\Z) \cong \Z^n \oplus \bigoplus_{q \mid p_V \atop q \text{ prime}} \Z^n\ .
	\]
	Moreover, $H^k(\bT^n,R) \cong \extp^k \! R^n$ by the K\"unneth theorem. This implies the statement.
\end{proof}

\subsection{Comparison with the equivariant Brauer Group} \label{sec:BrauerGroup}
In this section we will take a closer look at the case where $V$ is the one-dimensional trivial representation, which implies $D = \C$ with the trivial $\bT$-action for the associated infinite UHF-algebra. We have $K_0^\bT(\C) \cong \Rep{\bT} = \Z[t,t^{-1}]$ and therefore 
\[
	GL_1(K_0^\bT(\C)_+) \cong \Z 
\]
generated by powers of $t$. Note that $\bddR \cong \Z$, while $\bddR^0 = \bddR^\infty = 0$. Hence, the coefficients of $E^*_{\C,\bT}(X)$ are given by
\[
	\check{E}^k_{\C,\bT} \cong \pi_{-k}(\eqAut{\bT}{\bK}) \cong \begin{cases}
		\Z & \text{if } k = 0\ , \\
		\Z & \text{if } k = -2\ , \\
		0 & \text{else} \ .
	\end{cases}
\]
Consider the inclusion map $\eqAutId{\bT}{\bK} \to \eqAut{\bT}{\bK}$ of the unit component and the embedding $\eqAut{\bT}{\bK} \to \Aut{\bK}$ of equivariant automorphism into all automorphisms. Let $\kappa \colon \eqAutId{\bT}{\bK} \to \Aut{\bK}$ be the composition. This map fits into the following commutative diagram:
\[
	\begin{tikzcd}
		\eqAutId{\bT}{\bK} \ar[r,"\kappa"] \ar[d,"\simeq" left] & \Aut{\bK} \ar[d,"\simeq"] \\
		\Proj{e}{\bK^\bT} \ar[r] & \Proj{e}{\bK}
	\end{tikzcd}
\]
Since $e \in \bK$ is a projection onto a one-dimensional trivial subrepresentation of $H = \ell^2(\Z) \otimes H_0$, the space $\Proj{e}{\bK^\bT} = \Proj{e}{C_0(\Z) \otimes \bK(H_0)}$ can be identified with $\Proj{e}{\bK(H_0)}$ and the bottom horizontal map is induced by the corner embedding $\bK(H_0) \to \bK(H) = \bK$. If we view $H_0$ as a closed subspace of $H$ and let $U(H_0) \to U(H)$ be the inclusion that is the identity on the complement of $H_0$, then the following diagram commutes:
\[
	\begin{tikzcd}[column sep=1.7cm]
		U(1) \times U(e^\perp H_0) \ar[d,"\id{U(1)} \times \left.\iota\right|_{U(e^\perp H_0)}"] \ar[r] & U(H_0) \ar[r,"u \mapsto ueu^*"] \ar[d,"\iota"] & \Proj{e}{\bK(H_0)} \ar[d] \\
		U(1) \times U(e^\perp H) \ar[r] & U(H) \ar[r,"u \mapsto ueu^*"] & \Proj{e}{\bK(H)}
	\end{tikzcd}
\]
The long exact sequence of homotopy groups then shows that the map $\Proj{e}{\bK^\bT} \to \Proj{e}{\bK}$ is a homotopy equivalence. Let $G_\C$ be the EH-$\cI$-group from \cite[Sec.~4.2]{paper:DadarlatP-UnitSpectra} for $D = \C$. The two group homomorphisms $\eqAutId{\bT}{\bK} \to \eqAut{\bT}{\bK}$ and $\eqAut{\bT}{\bK} \to \Aut{\bK}$ are compatible with forming tensor products and hence give rise to maps of commutative $\cI$-monoids
\[
	G_{\C,0}^\bT \to G_\C^\bT \qquad \text{and} \qquad G_\C^\bT \to G_\C\ .
\]
Inspecting the Atiyah-Hirzebruch spectral sequences for the cohomology theories $\bar{E}^*_{\C,\bT}(X)$ and $\bar{E}_\C^*(X)$ obtained from $G_{\C,0}^\bT$ and $G_{\C}$, respectively, we see that 
\[
	\bar{E}^k_{\C,\bT}(X) \cong H^{k+2}(X,\Z) \cong \bar{E}^k_{\C}(X)
\]
and our above observations show that $E^*_{\C,\bT}$ therefore naturally splits of the group $H^{k+2}(X,\Z)$. This implies that all the differentials in the Atiyah-Hirzebruch spectral sequence for $E^*_{\C,\bT}$ must vanish. With only two non-trivial rows on the $E_2$-page we obtain the following result: 
\begin{corollary} For $k \geq 0$ there is a natural isomorphism
	\[
		E^k_{\C,\bT}(X) \cong H^k(X,\Z) \oplus H^{k+2}(X,\Z)	\ .
	\]
\end{corollary}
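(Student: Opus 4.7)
The plan is to specialise the coefficient calculations of Cor.~\ref{cor:coh_theory} and Thm.~\ref{thm:coefficients} to $V = \C$, then run the Atiyah--Hirzebruch spectral sequence for $E^*_{\C,\bT}(X)$ and show it collapses with a naturally split filtration using the transformations to $E^*_\C(X)$ and from $\bar{E}^*_{\C,\bT}(X)$ that are already in the picture above.

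For the trivial one-dimensional representation, $p_V(t) = 1$, $D = \C$ and $K_0^\bT(D) \cong \Z[t,t^{-1}]$. Hence $GL_1(\Z[t,t^{-1}]_+) = \{t^n \mid n \in \Z\} \cong \Z$, $\bddR \cong \Z$ (only constants are bounded), and $\bddR^0 = \bddR^\infty = 0$. Plugging into the coefficient formulas gives $\check{E}^0_{\C,\bT} \cong \check{E}^{-2}_{\C,\bT} \cong \Z$ and $\check{E}^k_{\C,\bT} = 0$ for all other $k$. Consequently the AHSS has exactly two non-trivial rows, $q = 0$ and $q = -2$, both equal to $H^p(X,\Z)$, and the only potentially non-zero differential is $d_3 \colon E_3^{p,0} \to E_3^{p+3,-2}$.

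To show $d_3 = 0$ I would exploit the morphism of spectral sequences induced by the map $G_\C^\bT \to G_\C$ of commutative $\cI$-monoids coming from the forgetful map $\eqAut{\bT}{\bK} \to \Aut{\bK}$. The target AHSS for $E^*_\C(X)$ is concentrated in row $q = -2$; consequently the morphism is zero on the $q = 0$ row. On the $q = -2$ row the map on coefficients $\pi_2(\eqAut{\bT}{\bK}) \to \pi_2(\Aut{\bK})$ factors as $\pi_2(\eqAut{\bT}{\bK}) \cong \pi_2(\eqAutId{\bT}{\bK}) \to \pi_2(\Aut{\bK})$, and the second arrow is an isomorphism by the weak equivalence $\kappa$ established just before the corollary. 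Compatibility with differentials then yields $\phi_{-2} \circ d_3 = d_3^{E_\C} \circ \phi_0 = 0$; since $\phi_{-2}$ is an isomorphism this forces $d_3 = 0$. With $E_\infty = E_2$, the two-step filtration of $E^k_{\C,\bT}(X)$ produces the short exact sequence
\[
0 \longrightarrow H^{k+2}(X,\Z) \longrightarrow E^k_{\C,\bT}(X) \longrightarrow H^k(X,\Z) \longrightarrow 0 \ .
\]

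To split this sequence I would use the other natural transformation $\bar{E}^*_{\C,\bT}(X) \to E^*_{\C,\bT}(X)$ induced by the inclusion $\eqAutId{\bT}{\bK} \hookrightarrow \eqAut{\bT}{\bK}$. The preceding diagram analysis identifies $\bar{E}^k_{\C,\bT}(X) \cong H^{k+2}(X,\Z)$, and the composite $\bar{E}^k_{\C,\bT}(X) \to E^k_{\C,\bT}(X) \to E^k_\C(X)$ is a natural isomorphism because, at the level of spectra, it is induced by $\kappa$. Therefore the retraction $E^k_{\C,\bT}(X) \to E^k_\C(X) \cong H^{k+2}(X,\Z)$ is a left inverse to the inclusion $H^{k+2}(X,\Z) \hookrightarrow E^k_{\C,\bT}(X)$, giving the desired direct sum decomposition naturally in $X$. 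I expect the main technical nuisance to be verifying the compatibilities between the AHSS filtrations and the natural transformations at each step (in particular, that the retract lands precisely in the top filtration piece $F^{k+2}$), but this follows from naturality of the AHSS applied to the maps of spectra $G^\bT_{\C,0} \to G^\bT_\C \to G_\C$ together with the concentration of the coefficients of $\bar{E}^*_{\C,\bT}$ and $E^*_\C$ in degree $-2$.
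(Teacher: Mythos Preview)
Your proposal is correct and follows essentially the same route as the paper: both use the maps of commutative $\cI$-monoids $G_{\C,0}^\bT \to G_\C^\bT \to G_\C$ together with the homotopy equivalence $\kappa \colon \eqAutId{\bT}{\bK} \to \Aut{\bK}$ to force the Atiyah--Hirzebruch spectral sequence for $E^*_{\C,\bT}$ to collapse and to split the resulting two-step filtration naturally. The only difference is presentational---you argue $d_3 = 0$ first via the morphism of spectral sequences and split afterwards, whereas the paper phrases the natural retract observation first and deduces the collapse from it.
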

The group $E^0_{\C,\bT}(X) \cong H^0(X,\Z) \oplus H^2(X,\Z)$ has a nice geometric interpretation not involving operator algebras: Given a map $f \colon X \to \eqAut{\bT}{\bK}$ define
 \[
 	L_f = \{ (x,\xi) \in X \times H\ |\ f(x)(e)\xi = \xi \}\ .
 \]
This is a line bundle over $X$ that carries a fibrewise $\bT$-action induced by the $\bT$-action on $H$. Homotopic maps induce isomorphic equivariant line bundles. Thus, $E^0_{\C,\bT}(X)$ classifies $\bT$-equivariant line bundles over $X$ up to isomorphism. The class in $H^2(X,\Z)$ is the Chern class $c_1(L_f)$, and the class in $H^0(X,\Z)$ is the $\Z$-valued function 
\(
	\chi \colon X \to \Z
\)
which determines the character of the $\bT$-action on each connected component. (Note that $\Endo{L_f} = L_f^* \otimes L_f = X \times \C$ and the $\bT$-action is given by $z \mapsto z^{\chi(x)}$.)

To understand $E^1_{\C,\bT}(X)$ we need to recall the definition of the equivariant Brauer group $\text{Br}_{\bT}(X)$ from \cite[Chap.~7]{book:RaeburnWilliams}: Its elements are equivalence classes $[A,\alpha]$ consisting of a continuous-trace $C^*$-algebra $A$ with spectrum~$X$ together with an action $\alpha \colon \bT \to \Aut{A}$ that commutes with the multiplication by $C(X)$, since we only consider the trivial $\bT$-action on $X$ in this section. The equivalence relation is $\bT$-equivariant $C(X)$-Morita equivalence (see \cite[Def.~7.2]{book:RaeburnWilliams} for details). We have a natural homomorphism
\begin{equation} \label{eqn:Brauer_iso}
	E^1_{\C,\bT}(X) \to \text{Br}_{\bT}(X)
\end{equation}
defined by mapping a $\bT$-equivariant locally trivial bundle $\mathcal{A} \to X$ with fibre~$\bK$ to the equivariant continuous-trace $C^*$-algebra $[C(X,\mathcal{A}), \alpha]$ obtained from the sections of the bundle together with the action induced by the fibrewise $\bT$-action on $\mathcal{A}$. By \cite[Thm.~7.20]{book:RaeburnWilliams} 
\[
	\text{Br}_{\bT}(X) \cong H^1(X, \widehat{\bT}) \oplus H^3(X,\Z) \oplus C(X,\underline{H}^2(\bT,\bT))\ ,
\]
where $\widehat{\bT} \cong \Z$ is the Pontrjagin dual of $\bT$. Moore has shown in \cite[Prop.~2.1]{paper:MooreExt} that for a compact connected Lie group $G$ there is a canonical isomorphism $H^2(G,\bT) \cong (\text{Tor}(\pi_1(G)))^\wedge$. In our case $\pi_1(\bT) \cong \Z$, which is torsion free. Hence, the third summand in the above decomposition vanishes and 
\[
	\text{Br}_{\bT}(X) \cong H^1(X, \Z) \oplus H^3(X,\Z)\ .
\]
The group homomorphism $\text{Br}_{\bT}(X) \to H^3(X,\Z)$ maps the equivalence class of a pair $[A,\alpha]$ to the Dixmier-Douady class of $A$. As discussed above, there is also a natural transformation $F \colon E^1_{\C,\bT}(X) \to \bar{E}^1_{\C}(X)$, which is surjective. These maps fit into the following commutative diagram:
\[
\begin{tikzcd}
	E^1_{\C,\bT}(X) \ar[r,"F"] \ar[d] & \bar{E}^1_{\C}(X) \ar[d,"\cong"] \\
		\text{Br}_{\bT}(X) \ar[r] & H^3(X,\Z) 
\end{tikzcd}
\]
The homomorphism $\text{Br}_{\bT}(X) \to H^1(X,\Z)$ is given by the Phillips-Raeburn obstruction. To understand its definition consider a continuous trace $C^*$-algebra $A$ with spectrum $\widehat{A} = X$ and equipped with a pointwise unitary $\bT$-action. By \cite[Cor.~1.2]{paper:Rosenberg} the action is then locally unitary and \cite[Prop.~2.2]{paper:PhillipsRaeburn-locunitary} implies that the restriction map
\[
	(A \rtimes \bT)^\wedge \to \widehat{A} 
\]
is a locally trivial $\widehat{\bT}$-bundle. Since $\widehat{\bT} \cong \Z$, the isomorphism class of this bundle is determined by a class in $H^1(X,\Z)$. This is the Phillips-Raeburn obstruction.

If $\mathcal{A} \to X$ is a locally trivial $\bK$-bundle with fibrewise $\bT$-action induced by $U_z$ as in \eqref{eqn:action_on_K} and $A = C(X,\mathcal{A})$, then $A \rtimes \bT$ is the section algebra of the locally trivial bundle with fibre $\bK \rtimes \bT$ associated to the principal $\eqAut{\bT}{\bK}$-bundle of $\mathcal{A}$ using the homomorphism 
\[
	\eqAut{\bT}{\bK} \to \Aut{\bK \rtimes \bT} \quad , \quad \alpha \mapsto \alpha \rtimes \id{}
\]
Since the action of $\bT$ on $\bK$ is unitarily implemented, the map $a \mapsto a\,U$ defines an isomorphism $\bK \rtimes \bT \to \bK \otimes C^*(\bT) \cong \bK \otimes C_0(\Z)$. This isomorphism intertwines $\alpha \rtimes \id{}$ with $\alpha \otimes s_{\theta(\alpha)} \in \Aut{\bK \otimes C_0(\Z)}$, where $s_n \in \Aut{C_0(\Z)}$ is given by $s_n(f)(x) = f(x - n)$ and $\theta$ is the homomorphism from \eqref{eqn:pi0-hom_for_K}. Thus, the cocycle defining the principal $\Z$-bundle $(A \rtimes \bT)^\wedge \to \widehat{A}$ is associated to the principal $\eqAut{\bT}{\bK}$-bundle via $\theta$. But since $\theta$ induces the homomorphism $E^1_{\C,\bT}(X) \to H^1(X,\Z)$ the following diagram commutes  
\[
\begin{tikzcd}
	E^1_{\C,\bT}(X) \ar[dr,"\theta"] \ar[d] \\
		\text{Br}_{\bT}(X) \ar[r] & H^1(X,\Z) 
\end{tikzcd}
\]
We summarise our observations in the following theorem: 
\begin{theorem} \label{thm:BrauerGroup}
	The group homomorphism $E^1_{\C,\bT}(X) \to \text{\rm Br}_{\bT}(X)$ in \eqref{eqn:Brauer_iso} is an isomorphism such that the following diagram of natural isomorphisms commutes: 
	\[
	\begin{tikzcd}
		E^1_{\C,\bT}(X)\ar[r] \ar[dr] & \text{\rm Br}_{\bT}(X) \ar[d] \\
		& H^1(X,\Z) \oplus H^3(X,\Z)		
	\end{tikzcd}
	\]
\end{theorem}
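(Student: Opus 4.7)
The plan is to show that both $E^1_{\C,\bT}(X)$ and $\text{\rm Br}_\bT(X)$ are naturally identified with $H^1(X,\Z) \oplus H^3(X,\Z)$ in a way compatible with the homomorphism $\Phi$ of \eqref{eqn:Brauer_iso}, so that $\Phi$ becomes the identity under these identifications. The left-hand identification is already in hand: from the coefficient computation $\check{E}^0_{\C,\bT} \cong \check{E}^{-2}_{\C,\bT} \cong \Z$ and the collapse of the Atiyah--Hirzebruch spectral sequence (which follows from the natural transformation $F \colon E^*_{\C,\bT} \to \bar{E}^*_\C$ being an isomorphism on the $q = -2$ row), we obtain a natural splitting
\[
(\theta, F) \colon E^1_{\C,\bT}(X) \xrightarrow{\cong} H^1(X,\Z) \oplus H^3(X,\Z)\ ,
\]
where the first coordinate is the edge homomorphism induced by $\theta$ at the level of $\pi_0$ and the second is $F$ followed by $\bar{E}^1_\C(X) \cong H^3(X,\Z)$.

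For the right-hand identification I would invoke \cite[Thm.~7.20]{book:RaeburnWilliams}; the Moore-cohomology summand vanishes because $\pi_1(\bT) \cong \Z$ is torsion free, yielding a natural isomorphism
\[
(\text{\rm PR}, \delta) \colon \text{\rm Br}_\bT(X) \xrightarrow{\cong} H^1(X,\Z) \oplus H^3(X,\Z)
\]
with Phillips--Raeburn obstruction and Dixmier--Douady class as coordinates. The two commutative squares displayed just before the statement of the theorem then say exactly that $\delta \circ \Phi = F$ (after identifying $\bar{E}^1_\C(X)$ with $H^3(X,\Z)$) and that $\text{\rm PR} \circ \Phi = \theta$, hence $(\text{\rm PR}, \delta) \circ \Phi = (\theta, F)$. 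Since the right-hand side is an isomorphism, $\Phi$ must be one as well, and the triangle in the statement commutes by construction.

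The real content of the proof, which the preceding paragraphs have already carried out, is the identification of the Phillips--Raeburn obstruction via the crossed-product decomposition $\bK \rtimes \bT \cong \bK \otimes C_0(\Z)$ together with the observation that $\alpha \rtimes \id{}$ translates into $\alpha \otimes s_{\theta(\alpha)}$; this is the step that links the categorical invariant $\theta$ to the topological Phillips--Raeburn cocycle and is therefore the true obstacle. The only remaining verification I would make explicit is that the $H^1$-coordinate of the AHSS splitting really is given by $\theta$, which follows from the general fact that the edge homomorphism $E^1_{\C,\bT}(X) \to H^1(X, \check{E}^0_{\C,\bT})$ is induced on classifying spaces by $\pi_0$ of the structure group, combined with Lem.~\ref{lem:pi0_equiv} for $D = \C$.
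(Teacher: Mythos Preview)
Your proposal is correct and follows essentially the same approach as the paper: the theorem is stated as a summary of the preceding discussion, and you have accurately reconstructed that argument—identifying both sides with $H^1(X,\Z)\oplus H^3(X,\Z)$ via the AHSS collapse (using the equivalence $\kappa$) on one side and \cite[Thm.~7.20]{book:RaeburnWilliams} plus Moore's vanishing on the other, and then matching the two coordinates via the Dixmier--Douady and Phillips--Raeburn squares. Your added remark that the $H^1$-edge homomorphism is induced by $\pi_0$ of the structure group (hence by $\theta$) is a useful clarification that the paper leaves implicit.
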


In particular, any class in the equivariant Brauer group $\text{Br}_{\bT}(X)$ can be realised as the section algebra of a $\bT$-equivariant locally trivial bundle over $X$ with fibre $\bK$. Its Phillips-Raeburn obstruction corresponds to the isomorphism class of the principal $\Z$-bundle defined by applying $\pi_0$ to each fibre of the principal $\eqAut{\bT}{\bK}$-bundle and the Dixmier-Douady class is the one of the $\bK$-bundle after forgetting the $\bT$-action.

Note that, since the $\bT$-action on $X$ is trivial, we also have the following isomorphism using the K\"unneth theorem
\begin{align*}
	H^3_{\bT}(X,\Z) &\cong H^3(B\bT \times X,\Z) \\ 
	&\cong H^0(B\bT,\Z) \otimes H^3(X,\Z) \oplus H^2(B\bT,\Z) \otimes H^1(X,\Z) \\
	&\cong H^1(X,\Z) \oplus H^3(X,\Z)
\end{align*}
Hence, $E^1_{\C,\bT}(X) \cong H^3_{\bT}(X,\Z)$, which is another indication of a potential interpretation of $E^*_{D,\bT}(X)$ in terms of equivariant stable homotopy theory.

\section*{Acknowledgements}
	The authors would like to thank the anonymous referee for the suggestion to investigate the link between tensorial $\JiangSu$-absorption of the fixed-point algebra $D^\bT$ and the case distinction in Thm.~\ref{thm:coefficients} (see Rem.~\ref{rem:Z-absorbtion}). 

\appendix

\section{Polynomials with \\ non-negative integer coefficients} \label{sec:appendix}
The key observation about polynomials with non-negative integer coefficients and non-zero constant term that we will use frequently in this section is the following: If $p(t) = \sum_{i=0}^d a_i t^i$ is such a polynomial and $0 = k_0 < k_1 < k_2 < \dots < k_r = d$ are those powers of $t$ such that $a_{k_j} > 0$, then the non-zero coefficients of $p(t)^m = \sum_{i=0}^{md} b_i t^i$ are $b_{\ell}$ with  
\[
	\ell = \sum_{i=1}^r x_i k_i
\]
for all $r$-tuples $x = (x_1, \dots, x_r)$ of non-negative integers with $\sum_{i=1}^r x_i \leq m$.

\begin{definition} \label{def:large_middle}
	Let $d \geq 1$. A polynomial $p(t) = \sum_{i = 0}^d a_i t^i$ of degree $d$ with non-negative integer coefficients is said to have \emph{large middle coefficients} if $a_j \neq 1$ for all $j \in \{1, \dots, d-1\}$, $a_d \neq 0$ and $a_0 \neq 0$.
\end{definition}

\begin{lemma} \label{lem:large_middle}
	Let $d \geq 1$ and let $p \in \Z[t]$ be any polynomial of degree $d$ with non-negative coefficients and non-zero constant term, then there is an $N \in \N$ such that $p^N$ has large middle coefficients.
\end{lemma}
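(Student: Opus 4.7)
The plan is to show that for $N$ sufficiently large all coefficients of $p^N$ in the middle range $1 \leq \ell \leq Nd - 1$ are either zero or at least two. The boundary coefficients $a_0^N$ and $a_d^N$ are automatically non-zero by hypothesis, so only the middle condition of Def.~\ref{def:large_middle} needs work.

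Write $p(t) = \sum_{i=0}^{r} a_{k_i}\, t^{k_i}$ with $0 = k_0 < k_1 < \dots < k_r = d$ and $a_{k_i} \geq 1$. The multinomial expansion gives
\[
	b_\ell := [t^\ell]\,p^N = \sum \binom{N}{x_0,\dots,x_r} \prod_{i=0}^{r} a_{k_i}^{x_i},
\]
where the sum runs over $(x_0,\dots,x_r) \in \N_0^{r+1}$ with $\sum_i x_i = N$ and $\sum_i x_i k_i = \ell$; every summand is a positive integer. Hence $b_\ell = 1$ is only possible when the sum has a single term whose multinomial coefficient equals $1$, which forces some $x_j = N$ with the other $x_i$ all zero, and in particular $\ell = Nk_j$. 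The middle range $1 \leq \ell \leq Nd - 1$ then restricts the offending index to $j \in \{1,\dots,r-1\}$.

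The key step will be to produce a second representation of $Nk_j$ for such $j$, using the elementary identity
\[
	(k_{j+1} - k_j)\,k_{j-1} + (k_j - k_{j-1})\,k_{j+1} = (k_{j+1} - k_{j-1})\,k_j.
\]
Setting $x_j = N - (k_{j+1} - k_{j-1})$, $x_{j-1} = k_{j+1} - k_j$, $x_{j+1} = k_j - k_{j-1}$, and all remaining $x_i = 0$ gives a non-negative tuple (valid whenever $N \geq k_{j+1} - k_{j-1}$) satisfying $\sum_i x_i = N$ and $\sum_i x_i k_i = Nk_j$. Since $k_{j+1} - k_{j-1} \leq d$, any $N \geq d$ yields such a second representation for every middle $j$ simultaneously, so $b_{Nk_j} \geq 2$ and hence every non-zero middle coefficient of $p^N$ is at least $2$.

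The degenerate case $r = 1$ (so $p = a_0 + a_d t^d$) contains no middle $k_j$; here $b_{jd} = \binom{N}{j}\,a_0^{N-j} a_d^j \geq N$ for $1 \leq j \leq N-1$, so $N \geq 2$ already suffices. Taking $N \geq \max(d,2)$ covers every case. I do not foresee a serious obstacle: the only real content is the swap identity above, which uses only the three consecutive supports $k_{j-1}, k_j, k_{j+1}$, and verifying that the displayed tuple is a valid, distinct second representation is an immediate check.
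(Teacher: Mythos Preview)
Your argument is correct. The multinomial observation that $b_\ell = 1$ forces a single summand with multinomial coefficient $1$, hence $\ell = Nk_j$, is a clean reduction that the paper does not make; instead the paper factors $p^d = p^{d-1}\cdot p$ and runs a case analysis on the shape of the exponent tuple (two or more nonzero entries; one nonzero entry $y_s < d$; one nonzero entry $y_s = d$), producing in each case a second contribution to the same power of $t$. Your three-term swap identity using $k_{j-1},k_j,k_{j+1}$ is a different (and more symmetric) witness for a second representation than the paper's choice $y' = (0,\dots,0,\,d-k_{s+1},\,k_s,\,0,\dots,0)$ at positions $s,s+1$, but both yield the same bound $N = d$ (your $\max(d,2)$ agrees with this for $d\geq 2$, and for $d=1$ the middle range is empty anyway). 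The upshot is the same strategy---exhibit a second monomial hitting the critical exponents---but your execution avoids the paper's case split at the cost of the small extra argument isolating which $\ell$ can possibly give $b_\ell = 1$.
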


\begin{proof}
	Let $p(t) = \sum_{i=0}^d a_i t^i$ and let $0 = k_0 < k_1 < k_2 < \dots < k_r = d$ be those powers of $t$ such that $a_{k_j} > 0$. We claim that the statement holds for $N = d$. This is true for $d = 1$, since all polynomials $a_0 + a_1t$ with $a_0 \neq 0$ and $a_1 \neq 0$ have large middle coefficents. 
		
	Let $p(t)^{d-1} = \sum_{i=0}^{(d-1)d} b_i t^i$ and let $p(t)^d = \sum_{i=0}^{d^2} c_i t^i$. If we have a non-zero coefficient $c_{\ell}$ corresponding to an $r$-tuple $(y_1, \dots, y_r)$ with at least two non-zero entries $y_s$ and $y_t$, then $c_{\ell}$ is a sum of at least two terms in the product $p(t)^{d-1} \cdot p(t)$. For example, let $\ell_1$ correspond to $(y_1, \dots, y_s-1, \dots, y_r)$ and $\ell_2$ to $(y_1, \dots, y_t - 1, \dots, y_r)$. Then $b_{\ell_1}a_{k_s}t^{\ell}$ and $b_{\ell_2}a_{k_t}t^{\ell}$ are both summands in $p(t)^d$. Therefore all $c_{\ell}$ corresponding to $r$-tuples with at least two non-zero entries satisfy $c_{\ell} \geq 2$. 
	
	The remaining terms to be considered are the coefficients $c_{\ell}$ corresponding to $r$-tuples $y = (0,\dots, 0, y_s, 0, \dots, 0)$ with one non-zero entry. If $y_s < d$, then there are at least two contributions to $c_{\ell}$: $a_0b_{y_sk_s} t^{y_sk_s}$ and $a_{k_s}b_{(y_s-1)k_s}t^{y_sk_s}$. If $y_s = d$ with $s < r$, then 
	\[
		d k_s = k_{s+1} k_s + (d-k_{s+1})k_s\ .
	\] 
	Note that $d- k_{s+1} \geq 0$ and $d - k_{s+1} + k_s = d - (k_{s+1} - k_s) \leq d$. Therefore $y' = (0, \dots, 0, d-k_{s+1}, k_s, 0, \dots,0)$ with entries in position $s$ and $s+1$ is a valid $r$-tuple, which gives rise to the same coefficient $c_\ell$ as $y$. Thus, we have $c_{\ell} \geq 2$ in this case as well.
\end{proof}

The reason for the terminology is the following lemma, which also encapsulates our main application for polynomials with large middle coefficients. 
\begin{lemma} \label{lem:coefficients_grow}
	Let $p \in \Z[t]$ be a polynomial of degree $d \geq 1$ with large middle coefficients. If $p(t)^m = \sum_{i=0}^{md} b_i t^i$, then either $b_i > m$ or $b_i = 0$ for all $i \in \{1, \dots, md-1\}$. Moreover, if $a_0 > 1$, then  $b_0 > m$. Likewise, if $a_{d} > 1$, then $b_{md} > m$. 
\end{lemma}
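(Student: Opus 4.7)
The plan is to analyze $b_\ell$ combinatorially using the expansion
\[
	b_\ell = \sum_{(z_j) \in S_\ell} \binom{m}{z_{k_0}, z_{k_1}, \ldots, z_{k_r}} \prod_{j=0}^r a_{k_j}^{z_{k_j}}
\]
where $\{0 = k_0 < k_1 < \cdots < k_r = d\}$ are the indices at which $p$ has non-zero coefficients, and $S_\ell$ is the set of non-negative integer tuples $(z_{k_j})$ with $\sum z_{k_j} = m$ and $\sum k_j\,z_{k_j} = \ell$. The two boundary statements are immediate: $b_0 = a_0^m$ and $b_{md} = a_d^m$, so $a_0 > 1$ (resp.\ $a_d > 1$) forces the coefficient to be at least $2^m$, which exceeds $m$ for every $m \geq 1$.

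For the main statement, I would fix $\ell \in \{1,\ldots,md-1\}$ with $b_\ell \neq 0$ and split into two cases depending on whether any contributing tuple uses a middle index. Suppose first that there is a contributing tuple $(z_{k_j})$ with $z_{k_j} \geq 1$ for some $1 \leq j \leq r-1$. By the large middle coefficients hypothesis, $a_{k_j} \neq 1$ and $a_{k_j} \neq 0$, hence $a_{k_j} \geq 2$. If $z_{k_j} = m$ (the only non-zero entry), the contribution is $a_{k_j}^m \geq 2^m > m$. Otherwise at least two entries are positive, so $z_{k_j} \in \{1,\ldots,m-1\}$; then the multinomial coefficient is bounded below by $\binom{m}{z_{k_j}} \geq m$ (since $\binom{m}{1} = \binom{m}{m-1} = m$ and intermediate values are strictly larger), and the factor $a_{k_j}^{z_{k_j}} \geq 2$ doubles this. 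Since all summands in the expansion are non-negative, this single contribution already forces $b_\ell \geq 2m > m$.

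The remaining case is when no contributing tuple uses a middle index. Then every tuple in $S_\ell$ has the form $(m - z_d, 0,\ldots,0, z_d)$, which forces $\ell = d\,z_d$ with $z_d \in \{1,\ldots,m-1\}$ and yields a unique summand
\(
	b_\ell = \binom{m}{z_d} a_0^{m-z_d} a_d^{z_d} \geq \binom{m}{z_d} \geq m.
\)
If $a_0 > 1$ or $a_d > 1$, one of the exponential factors is $\geq 2$ and $b_\ell \geq 2m > m$. I expect the main obstacle to be the residual situation $a_0 = a_d = 1$ with $z_d \in \{1,m-1\}$: here the naive estimate only yields $b_\ell = m$. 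To cross the strict inequality, I would argue that under these hypotheses the existence of some interior $k_j$ with $a_{k_j} \geq 2$ actually forces an additional decomposition $\ell = \sum k_j x_j$ involving a middle index (using elementary arithmetic on the set $\{k_1,\ldots,k_{r-1}\}$ together with $k_r = d$), which places us back in the first case and contributes a strictly positive further summand to $b_\ell$. Making this arithmetic step precise — ruling out configurations like $k_0=0$, $k_{r-1}$ not dividing $d$, where the decomposition fails — is the delicate part of the proof, and is where one would invoke the specific structure of the non-zero coefficient support promised by Lem.~\ref{lem:large_middle}.
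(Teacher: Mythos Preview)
Your approach via the multinomial expansion is genuinely different from the paper's inductive argument, and arguably more transparent. The boundary cases and your Case~1 are handled cleanly; the bound $\binom{m}{z_{k_0},\ldots,z_{k_r}} \geq \binom{m}{z_{k_j}} \geq m$ together with $a_{k_j}^{z_{k_j}} \geq 2$ is exactly the right estimate there.

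The residual case you flag, however, is a genuine obstruction --- and not just for your argument. Take $p(t) = 1 + 2t^2 + t^3$, which has large middle coefficients: the only tuple contributing to $b_3$ in $p^m$ is $(m-1,0,1)$, so $b_3 = m$ on the nose. There is no additional decomposition to find, and nothing in Lemma~\ref{lem:large_middle} supplies one (that lemma only asserts that \emph{some} power has large middle coefficients; it gives no extra structure on the support). The same happens already for $p(t) = 1 + t^d$. So the strict inequality $b_\ell > m$ is simply false in these cases; what is actually provable is $b_\ell \geq m$. The paper's inductive proof has the same defect: the step ``$c_\ell \geq b_\ell + a_{k_s} b_{\ell'} > m+1$'' tacitly uses $b_\ell > m$ from the inductive hypothesis, but when $\ell = md$ and $a_d = 1$ this is not available (for $p = 1 + 2t^2 + t^3$ at the step $m = 1 \to 2$ one finds $c_3 = 2 = m+1$, not $> m+1$). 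Fortunately the weak inequality $b_\ell \geq m$ is all that is needed in Theorem~\ref{thm:coefficients}: one only requires $b_\ell > \tfrac{m}{2}$ once the power exceeds $\tfrac{m}{2}$, and your Case~2 estimate $\binom{m}{z_d} \geq m$ already delivers this without any further arithmetic on the support.
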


\begin{proof}
	We will prove the statement by induction over $m$. It is true for $m = 1$. Let $p(t) = \sum_{i=0}^d a_i t^i$ and let $0 = k_0 < k_1 < k_2 < \dots < k_r = d$ be the powers of $t$ such that $a_{k_j} > 0$. By our key observation the only non-zero coefficients $b_{\ell}$ occur for $\ell = \sum_{i=1}^r x_i k_i$ with $\sum_{i=1}^r x_i \leq m$ and by induction we have $b_{\ell} > m$ if $\ell \in \{1,\dots, md-1\}$. Now consider
	\[
		p(t)^{m+1} = \left( \sum_{i=0}^d a_i t^i\right) \cdot  \left( \sum_{j=0}^{md} b_j t^j \right) = \sum_{i=0}^d a_i \left( \sum_{j=0}^{md} b_j t^{i+j} \right) \ .
	\]
	For $i \in \{1,\dots, d-1\}$ and $j \in \{1, \dots, md-1\}$ we either get $a_i b_j = 0$ or $a_i b_j > 2m \geq (m+1)$, so the only potentially problematic summands occur if $i = 0$, $i =d$, $j = 0$ or $j = md$.

	Let us first consider the case $i= 0$. If $a_0 > 1$, then there is no problem, which shows the second part of the statement for this case. Otherwise $a_0 = 1$ and if $a_0 b_{\ell} = b_{\ell} \neq 0$ corresponds to $(x_1, \dots, x_r)$ with at least one $x_s > 0$, then there is another summand $a_{k_s}b_{\ell'}t^{k_s+\ell'}$ producing the same power of $t$ with $\ell'$ corresponding to $(x_1, \dots, x_s-1, \dots, x_r)$. Hence, the coefficient $c_\ell$ of $t^\ell$ in $p(t)^{m+1}$ satisfies $c_\ell \geq b_{\ell} + a_{k_s}b_{\ell'} > m + 1$. The case $a_d b_{\ell}$ with $0 < \ell < md$ is shown similarly. 

	Now consider the summands of $p(t)^{m+1}$ of the form $a_{k_i} b_0 t^{k_i}$. By the above argument for $i \in \{0,d\}$ we only need to consider $i \in \{1, \dots, r-1\}$. There is at least one other summand that produces the same power of $t$, for example $a_0 b_{\ell} t^\ell$ with $\ell$ corresponding to $x = (0,\dots, 1, \dots,0)$, where $1$ sits in the $i$th position. Hence, the coefficient $c_{k_i}$ of $t^{k_i}$ in $p(t)^{m+1}$ satisfies $c_{k_i} \geq a_{k_i} b_0 + a_0 b_{\ell} > 2 + m > m+1$.
	
	Finally consider the summands $a_{k_i} b_{md} t^{k_i + md}$ for $i \in \{1, \dots, r-1\}$ and note that $\ell = md$ corresponds to $x = (0,\dots, 0,m)$. Again there is at least one other summand producing the same power of $t$, for example the term $a_d b_{k_i + (m-1)d} t^{k_i + md}$ where $k_i + (m-1)d$ corresponds to the $r$-tuple $x = (0, \dots, 0, 1,0, \dots, m -1)$, where the $1$ sits again in the $i$th position. Hence, the coefficient $c_{k_i + md}$ of $t^{k_i + md}$ in $p(t)^{m+1}$ satisfies $c_{k_i + md} \geq a_{k_i} b_{md} + a_d b_{k_i + (m-1)d} > 2 + m > m+1$.
\end{proof}

\bibliographystyle{habbrv}
{\footnotesize
\bibliography{EquivariantAutomorphisms}
}
\end{document}